\documentclass{amsart} 

\usepackage{a4wide}
\usepackage{color}
\usepackage{amssymb, amsmath}
\usepackage{mathrsfs}

\usepackage[normalem]{ulem}

\usepackage{float}
\usepackage{graphicx}

\usepackage{tikz}
\usepackage{tikz-cd}
\usetikzlibrary{snakes}
\usetikzlibrary{intersections, calc}
\usepackage{epsfig}
\usepackage[all]{xy}
\usepackage{epstopdf}
\usepackage{framed}

\usepackage{subcaption}
\usepackage[toc,page]{appendix}

\newtheorem{theorem}{Theorem}[section]
\newtheorem{lemma}[theorem]{Lemma}
\newtheorem{proposition}[theorem]{Proposition}
\newtheorem{corollary}[theorem]{Corollary}

\theoremstyle{definition}
\newtheorem{definition}[theorem]{Definition}
\newtheorem{example}[theorem]{Example}

\newcounter{thmMaincounter}
\newtheorem{theoremM}[thmMaincounter]{Theorem}

\newtheorem{probM}[thmMaincounter]{Problem}

\theoremstyle{remark}
\newtheorem{remark}[theorem]{Remark}

\numberwithin{equation}{section}

\newcommand{\C}{{\mathbb{C}}}

\newcommand{\Z}{{\mathbb{Z}}}

\newcommand{\R}{{\mathbb{R}}}

\DeclareMathOperator{\Aut}{Aut}

\DeclareMathOperator{\chr}{char}

\newcommand{\lb}{\lbrace}
\newcommand{\rb}{\rbrace}
\newcommand{\la}{\langle}
\newcommand{\ra}{\rangle}

\begin{document}

\title[Independence and symmetric spaces]{Independence of homogeneous GKM manifolds and symmetric spaces}

\author[S.\ Kuroki]{Shintar\^o Kuroki}
\address{Department of Applied Mathematics Faculty of Science, Okayama University of Science, 1-1 Ridai-Cho Kita-Ku Okayama-shi Okayama 700-0005, Okayama, Japan}
\email{kuroki@ous.ac.jp}

\author{Grigory Solomadin}
\address[G.\,Solomadin]{Philipps-Universit\"at Marburg, Germany}
\email{grigory.solomadin@gmail.com}

\begin{abstract}
Let $G/H$ be a simply connected homogeneous space of maximal rank. 
Then the maximal torus $T$-action on $G/H$ is a GKM manifold.
We call the $T$-action $j$-independent if any $i(\leq j)$ pairwise distinct isotropy weights at a fixed point are linearly independent.
Using weighted graphs, we show that the maximal independence of $G/H$ is $2$, $3$ or $n=\dim T$, and that the cases of $3$ or $n=\dim T$ correspond to some symmetric spaces of rank $>2$.
As a corollary, 
using the results of Ayzenberg and Masuda, the lower-degree reduced homology groups (with appropriate coefficients) of the orbit space $T\backslash G/H$ vanish.
\end{abstract}

\keywords{Torus actions, GKM theory, homogeneous manifolds, symmetric spaces, root systems}
\subjclass[2020]{Primary: 57S12, 17B22. Secondary: 55P91, 57R91}

\maketitle

\section{Introduction}
\label{sect:1}

\subsection{Motivation and Main theorem}
\label{sect:1.1}

Let $M$ be a $2m$-dimensional, smooth $T$-manifold with a nonempty set of isolated fixed points $M^{T}$, where $T\cong (S^{1})^{n}$ and we assume that the $T$-action is {\it almost effective}, i.e., the kernel of the $T$-action is finite.
There is the following irreducible decomposition 
\begin{align}
\label{cpx_on_p}
T_{p}M\cong \bigoplus_{i=1}^{m}V_{\alpha_{p,i}},
\end{align}
of the isotropy representation at any fixed point $p\in M^{T}$, where $V_{\alpha_{p,i}}$ is the (complex) irreducible representation space of $T$, and $\alpha_{p,i}\in \mathfrak{t}_{\mathbb{Z}}^{*}/\pm 1$ for $i=1,\ldots, m$ is determined up to a sign.
Here, $\mathfrak{t}_{\mathbb{Z}}^{*}\cong \Z^{n}$ denotes the character lattice of 
the dual of the Lie algebra $\mathfrak{t}^{*}$ of $T$.
Set $S_{p}:=\{\alpha_{p,i}\ |\ i=1,\ldots, m\}\subset \mathbb{Z}^{n}$ and define the following numbers:
\begin{align}
\label{independency}
k(p):=\max\{q\ |\ \text{every $q$ elements in $S_{p}$ are linearly independent}\},\ 
k(M):=\min\limits_{p\in M^{T}} k(p).
\end{align}
If $2\le k(M)$, then this manifold is called a \textit{GKM manifold}~\cite{GKM},~\cite{GZ}. 
A GKM manifold is called a \textit{$j$-independent} or \textit{GKM$_{j}$) manifold} if $k(M)\geq j$ holds (see \cite{AMS, GS}. In \cite{AM}, this is called a {\it $j$-generality}). 
If $k(M)=m$ then $n=m$; in such case this manifold is called a {\it torus manifold}~\cite{MP},~\cite{HaMa}.  
Mikiya Masuda has asked the following question to the first named author: 
\begin{probM}
\label{Masuda-problem}
Let $M$ be an equivariantly formal (i.e., $H^{odd}(M)=0$) $2m$-dimensional GKM$_4$ manifold with a $T$-action.
Is $M$ homeomorphic to a torus manifold, and is the given $T$-action obtained as the restriction of a half-dimensional torus $T^{m}$-action to a subtorus $T$?
\end{probM}

The main result of the present paper is given as follows (see Theorem \ref{main}), where by a {\it maximal rank compact symmetric space} we mean a compact symmetric space $G/H$ that is a homogeneous space of maximal rank (also referred to as a {\it class 1 symmetric space of inner type}).
\begin{theoremM}\label{thmM:mp}
Let $G/H$ be any simply connected, homogeneous space such that $T\subset H\subset G$ is a maximal torus, ${\rm rank}\ T=n$, and $G$ is a compact, connected, simple Lie group.
Then one has 
\[
k(G/H)=2,3\ \text{or}\ n(=\frac{1}{2}\dim G/H).
\]
Furthermore, 
$k(G/H) = 3$ or $n$ if and only if $G/H$ is a maximal rank compact symmetric space with rank $>2$ other than $B_{n}/D_{i}\times B_{n-i}$, $C_{n}/A_{n-1}\times T^{1}$, $F_{4}/C_{3}\times A_{1}$.
\end{theoremM}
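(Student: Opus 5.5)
The plan is to reduce everything to root-system combinatorics at a single fixed point. Since $T\subset H$ is a maximal torus of $G$, the fixed point set $(G/H)^T$ is identified with $W_G/W_H$. Moreover, the isotropy weights at the fixed point $wH$ are $w(\Delta_G^+\setminus\Delta_H^+)$ up to sign. The Weyl group acts linearly, so $k(p)$ is the same at every fixed point. Hence $k(G/H)=k(eH)$, the maximal $q$ such that any $q$ pairwise distinct (up to sign) roots in $S:=\Delta_G\setminus\Delta_H$ are linearly independent. Two non-proportional roots are always independent, so $k\ge 2$ automatically, which is the GKM property. Here $H$ is a closed connected subgroup of maximal rank, so $\Delta_H$ is a closed symmetric subsystem of $\Delta_G$; by Borel--de Siebenthal these are obtained by iterating the extended Dynkin diagram deletions.

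\textbf{Step 1: $k\ge 3$ forces symmetric.} Suppose some $\alpha,\beta\in S$ have $\alpha+\beta\in S$ (or $\alpha-\beta\in S$). Then $\{\alpha,\beta,\alpha+\beta\}$ is a dependent triple of distinct weights, so $k=2$. Therefore $k\ge 3$ implies $[\mathfrak m,\mathfrak m]\subset\mathfrak h$ at the root level, where $\mathfrak m$ is the complement of $\mathfrak h$. Since $\mathfrak t\subset\mathfrak h$, this is exactly the condition that $\mathfrak g=\mathfrak h\oplus\mathfrak m$ is a $\Z_2$-graded symmetric pair. So $G/H$ is a maximal-rank symmetric space. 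These are classified as $A_n/A_{i}\times A_{n-i-1}\times T^1$, $B_n/D_i\times B_{n-i}$, $C_n/C_i\times C_{n-i}$, $C_n/A_{n-1}\times T^1$, $D_n/D_i\times D_{n-i}$, $D_n/A_{n-1}\times T^1$, and the exceptional ones ($G_2/A_1A_1$, $F_4/B_4$, $F_4/C_3A_1$, $E_6/A_5A_1$, $E_6/D_5T^1$, $E_7/\dots$, $E_8/\dots$).

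\textbf{Step 2: check each symmetric space.} In coordinates $e_1,\dots,e_n$, I will test two kinds of dependencies.

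\emph{Dependent triples.} A relation $\alpha=a\beta+b\gamma$ among roots in $S$ with $\alpha\pm\beta\notin\Delta$ typically occurs only for non-simply-laced long/short combinations. For example, in $B_n/D_i\times B_{n-i}$ we have $e_1$, $e_1+e_{j}$, $e_1-e_j$ in $S$ with $(e_1+e_j)+(e_1-e_j)=2e_1$, giving $k=2$. For $C_n/A_{n-1}T^1$ we get $2e_1,2e_2,e_1+e_2$. For $F_4/C_3A_1$ the analogous short/long relation appears. Rank $\le 2$ cases (e.g. $\mathbb{C}P^2$, $G_2/A_1A_1$, $S^4$-like quotients) are checked by hand and give $k=2$, or else $n=2$ coincides with the count.

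\emph{The remaining spaces.} For the other spaces I will show every triple is independent, using $[\mathfrak m,\mathfrak m]\subset\mathfrak h$ together with the fact that a rank-2 subsystem spanned by two roots of $S$ meets $S$ in at most two sign-pairs. Then I decide whether $k=3$ or $k=n$ by looking for a dependent quadruple. For instance, in $A_n/A_iA_{n-i-1}T^1$ with $i,n-i\ge 2$ we have $e_1-e_a$, $e_a-e_2$, $e_2-e_b$, $e_b-e_1$... (adjusting to stay in $S$: $e_1-e_a,e_2-e_a,e_1-e_b,e_2-e_b$ sum relation), which gives $k=3$. For $\mathbb{C}P^n$ and the quadrics/Grassmannian-like spaces $D_n/D_1D_{n-1}$ (where $|S|=n$ or the weights form a basis-like set), we get $k=n$.

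The main obstacle is Step 2 for the exceptional symmetric spaces ($E_6,E_7,E_8,F_4/B_4$). For these I will work in the standard coordinate models, use the $W_H$-action to reduce the triples and quadruples to a few orbit representatives, and show that each rank-3 span intersects $S$ in a configuration admitting a 4-term relation. The statement of the theorem for these cases is then read off case by case, giving exactly the three values $2$, $3$, $n$ and the stated exceptional list.
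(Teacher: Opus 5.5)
Your skeleton matches the paper's: reduce to $eH$ via the Weyl group, kill every non-symmetric $G/H$ with a triple $\alpha,\beta,\alpha+\beta$, then run through Cartan's list. For type $E$, your rank-$2$ triple argument is the paper's Lemma~\ref{lm:triprk2} plus the parity of $n_i$. The gaps are all in Step~2.

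First, the ``fact'' that a rank-$2$ subsystem spanned by two weights meets $S$ in at most two sign-pairs does not follow from $[\mathfrak m,\mathfrak m]\subset\mathfrak h$. The $\Z_2$-grading forces it only in $A_2$-subsystems. A $B_2$-subsystem whose long roots lie in $S$ always contains three pairs of $S$. This is exactly what happens in the symmetric space $B_n/D_i\times B_{n-i}$ ($i<n$), with $e_1, e_1\pm e_j$.

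So for $C_n/C_i\times C_{n-i}$, $F_4/B_4$ (and $B_n/D_n$) you must add an argument. In these cases every long root of $G$ lies in $\Delta_H$. Since $G$ has only two root lengths, in ratio $\sqrt2$, the long roots of any $B_2$-subsystem are long in $G$, and no $G_2$-subsystem occurs. Hence only the two short pairs of a $B_2$ can lie in $S$.

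With this supplement your route proves triple independence uniformly. It replaces the paper's signed-graph analysis and its Gram-determinant computation for $F_4/B_4$, so $F_4/B_4$ is not really an obstacle. It also explains the exception list: for these symmetric pairs, $k\ge 3$ exactly when $G$ is simply laced or all long roots lie in $\Delta_H$.

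Second, the quadruple step is wrong where you carry it out and missing elsewhere. $D_n/D_1\times D_{n-1}$ does not have $k=n$. Here $S=\{e_1\pm e_j : 2\le j\le n\}$ has $2n-2$ elements, and $(e_1+e_2)+(e_1-e_2)=(e_1+e_3)+(e_1-e_3)$, so $k=3$ for $n\ge4$. The spaces with $k=n$ are $\mathbb{C}P^n$ and $S^{2n}=B_n/D_n$, and the latter is absent from your list. The true value still lies in $\{3,n\}$, so the statement survives, but the verification you propose would fail.

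What you actually need is one explicit dependent quadruple for each remaining space:
\begin{itemize}
\item for $D_n/D_i\times D_{n-i}$ and $C_n/C_i\times C_{n-i}$: $(e_a+e_c)+(e_a-e_c)=(e_a+e_d)+(e_a-e_d)$, with $a$ in one block and $c,d$ in the other;
\item for $D_n/A_{n-1}\times T^1$: $(e_1+e_2)+(e_3+e_4)=(e_1+e_3)+(e_2+e_4)$;
\item for the Grassmannians: your $4$-cycle, which works for all $1\le i\le n-2$, not only $i\ge2$;
\item in type $E$: relations such as $(112111)-(111111)=(012111)-(011111)$ in simple-root coordinates for $E_6/A_5\times A_1$, with analogues for the other exceptional cases.
\end{itemize}

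Your plan to show that \emph{every} rank-$3$ span meets $S$ in a configuration with a $4$-term relation is neither necessary nor true. If $\Delta_G\cap V$ is of type $A_2\times A_1$, then $V$ contains at most three sign-pairs of $S$. A single quadruple per case suffices.
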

Note that by the classification of the homogeneous torus manifolds in \cite{Ku09}, $k(G/H) =n$ if and only if $G/H$ is equivariantly diffeomorphic to $A_{n}/A_{n-1}\times T^{1}(\cong \mathbb{C}P^{n})$ or $B_{n}/D_{n}(\cong S^{2n})$.
Moreover, the independence of $M_{1}\times \cdots \times M_{r}$ coincides with 
the minimal independence of $M_{i}$'s (if it is not a torus manifold, see Proposition~\ref{split_computation}), and the independence of any connected homogeneous space $G/H$ is equal to that of its universal covering space $G_{1}/H_{1}\times \cdots \times G_{r}/H_{r}$.
Therefore, together with these facts, Theorem~\ref{thmM:mp} answers Problem~\ref{Masuda-problem} positively for the class of homogeneous GKM manifolds.

As an application of Theorem~\ref{thmM:mp}, together with the acyclicity results of~\cite{AMS},~\cite{AM}, we obtain vanishing (in degrees $\leq 4$) for reduced homology with an appropriate coefficients of the orbit spaces for homogeneous GKM manifolds, see Corollary \ref{cor:acyc}.
Our results are consistent with those in~\cite{S19},~\cite{S21},~\cite{BT}, while providing new insights, particularly for several symmetric spaces.

\subsection{Sketch of the proof}
\label{sect:1.2}

The proof of Theorem~\ref{thmM:mp} follows by computing $k(G/H)$ for every homogeneous GKM manifold $G/H$.
Here, $k(G/H)=k(gH)$ for every fixed point $gH\in (G/H)^{T}$ (see~\cite{GHZ}).
We prove that $k(G/H)\geq 3$ implies that $G/H$ is a maximal rank compact symmetric space with rank $>2$ using the well known characterization of symmetric spaces in terms of the Lie algebra $T_{eH} G/H$ (see Lemma~\ref{auxiliary lemma}).
For maximal rank compact symmetric space $G/H$ such that $G$ is a classical Lie group (i.e., type $A$--$D$), the problem of finding maximal independence for $G/H$ reduces to the study of the matroid formed by the complement $\Delta^{+}_{G,H}:=\Delta^{+}_{G}\setminus \Delta^{+}_{H}$ of positive root systems.
The analysis of circuits for such a matroid is possible using {\it weighted signed graphs}, which were introduced in~\cite{Ha} and studied in~\cite{Za} (also see \cite{FGLP} and \cite{FGGP}).
We apply this technique together with Borel-de Siebenthal theory~\cite{BoSi} and Cartan's classification of symmetric spaces \cite{C27} to compute $k(G/H)$ for all pairs $(G,H)$ except for $G$ being of type $F_{4},E_{6},E_{7},E_{8}$.
(We remark that signed graphs under the name of Levi graphs or crystallographs were also used in~\cite{Re} to study root subsystems.
The difference of our application for signed graphs is that we study \textit{complements} of root subsystems.)
In the remaining cases, i.e. for $G$ being of type $F_{4},E_{6},E_{7},E_{8}$, the proof is a simple case study.

\subsection{Structure of the paper}
\label{sect:1.3}

We briefly outline the structure of the present paper.
In Section~\ref{sec:prep}, we recall the isotropy representation for homogeneous GKM manifolds, Borel-de Siebenthal theory, Cartan's classification of symmetric spaces and homology vanishing of the orbit space for independent GKM manifolds. 
One of the main contributions of this paper is to clarify the relationship between symmetric spaces and GKM manifolds from the viewpoint of independence.
The value of $k(G/H)$ for each homogeneous space $G/H$ is listed in Theorem~\ref{main}. 
This result implies $\widetilde{H}_{i}(T\backslash G/H;\Bbbk)=0$ for all $i\leq k(G/H)+1$, where $\Bbbk$ is additionally specified (see Corollary~\ref{cor:acyc} for symmetric GKM$_{3}$ cases).
This homology vanishing is compared with several previously known results in Examples~\ref{ex:origra} and~\ref{ex:comgra}.
The remainder of the paper is the proof of Theorem~\ref{main}.
In Section~\ref{sect:4}, linearly dependent triples are described in several cases in terms of weighted signed graphs (see Lemma~\ref{lm:23dep}).
In Sections~\ref{sect:5}--~\ref{sect:7}, we analyse linear dependencies for root subsystems of classical types, using Borel-de Siebenthal theory and weighted signed graphs.
In Sections~\ref{sect:8}--~\ref{sect:9},
the remaining cases (when $G$ is of type $F_{4},E_{6},E_{7},E_{8}$) are studied. 

\

\noindent {\bf Acknowledgements:} The authors gratefully acknowledge funding of the Deutsche Forschungsgemeinschaft
(DFG, German Research Foundation): Project number 561158824 (the second named author's Walter Benjamin Fellowship). 
We thank M.~Masuda for many valuable discussions on toric manifolds without whom we would not be able to conduct this project, and to O.~Goertsches for the interest to our paper.
We also thank M.~Nakagawa for giving us a useful information on symmetric spaces.
The first author was partially supported by JSPS KAKENHI Grant Number 21K03262.
He would like to thank the Philipps University of Marburg for providing him with an excellent environment for research during his stay in August 2025.
The second named author is grateful to Okayama University of Science  and Osaka Metropolitan University for a very comfortable work atmosphere, where the initial part of the present work was performed during his research visits in 2023. 

\section{Preparations}\label{sec:prep}
We first prepare the basic notions and facts to prove our main theorem.

\subsection{$k$-independence}
\label{sect:2.1}
Let $S:=\{\alpha_{1},\ldots, \alpha_{m}\}\subset \mathbb{Z}^{n} \subset \mathbb{R}^{n}$ be a set of vectors. 
We call $S$ a {\it $k$-independent vectors} for some $2\le k\le n$ 
if every $k$ vectors $\alpha_{i_{1}},\ldots, \alpha_{i_{k}}\in S$ are linearly independent but there exists $k+1$ vectors $\alpha_{j_{1}},\ldots, \alpha_{j_{k+1}}\in S$ which are not linearly independent.
In this case, we also say that the set of vectors $S$ has the property $U_{m}^{k}$, i.e., 
the {\it uniform matroid} with rank $k$ and with $m$ elements, see \cite{Ox}.

\begin{example}
\label{ex_matroid}
Let $S:=\{e_{1}, e_{2}, -e_{1}-e_{2}, e_{1}+e_{2}+e_{3}\}\subset \mathbb{R}^{3}$ be the set of $4$ vectors in the $3$-dimensional real vector space $\mathbb{R}^{3}$, where $e_{i}$, $i=1,2,3$, is the standard basis of $\mathbb{R}^{3}$.
Then, one can easily check that $S$ has the property $U_{4}^{2}$, i.e., $S$ is $2$-independent.
Note that the subset $\{e_{1},e_{2},e_{1}+e_{2}+e_{3}\}$ is linearly independent but $\{e_{1},e_{2},-e_{1}-e_{2}\}$ is not linearly independent.
\end{example}

\subsection{Some remarks on $k$-independence of tangential representations}
\label{sect:2.1}

Let $M$ be a $2m$-dimensional manifold with an $n$-dimensional torus $T$-action.
We assume that there is a non-empty fixed point set $M^{T}$, and the tangential representation at $p\in M^{T}$ is a direct sum of pairwise linearly independent characters of $T$.
Namely, for every $p\in M^{T}$, the irreducible decomposition (with respect to some complex structure) \eqref{cpx_on_p}
satisfies that $\alpha_{i}, \alpha_{j}$ for every $1\le i<j\le m$ are linearly independent,
where $V(\alpha_{i})$ is a complex $1$-dimensional irreducible representation space of the $T$-action and $\alpha_{i}:T\to S^{1}\in {\rm Hom}(T,S^{1}) \simeq \mathfrak{t}_{\mathbb{Z}}^{*}$. 
In this paper, we call $M^{2m}$  a {\it GKM manifold} (following~\cite{GZ}) if there is a $T^{n}$-action on $M^{2m}$ with the finite kernel such that its one-skeleton (the orbit space of the one- and zero-dimensional orbits) has the structure of a graph.

\begin{example}
\label{ex_depends_k(p)}
Now we may construct a GKM manifold $M$ 
such that there are two fixed points $p,\ q\in M^{T}$ with different properties $U_{m}^{k(p)}$ and $U_{m}^{k(q)}$.
Let $\mathbb{C}P^{1}$ be the complex projective space with the natural $T^{1}$-action.
We assume that $T^{1}\subset T^{3}$ is the first coordinate. 
Define the following symbols:
\begin{itemize}
\item $p_{i}:T^{3}\to S^{1}$ for $i=0,1,2,3$ is the projection onto the $i$-th coordinate, where $p_{0}$ is the trivial homomorphism;
\item $\epsilon_{i}=\mathbb{C}P^{1}\times \mathbb{C}$ for $i=0,1,2,3$ is the $T^{3}$-equivariant trivial line bundle over $\mathbb{C}P^{1}$ with the $T^{3}$-representation on the fiber $\mathbb{C}$ by the projection $p_{i}:T^{3}\to S^{1}$;
\item $\gamma$ is the tautological line bundle over $\mathbb{C}P^{1}$.
\end{itemize}
Let $M^{8}:=\mathbb{P}((\gamma\otimes \epsilon_{2}\otimes\epsilon_{3})\oplus \epsilon_{2}\oplus \epsilon_{3}\oplus \epsilon_{0})$ be the projectivization of the rank $4$ equivariant complex vector bundle over $\mathbb{C}P^{1}$, i.e., $M^{8}$ is the $\mathbb{C}P^{3}$-bundle over $\mathbb{C}P^{1}$ with $T^{3}$-action (see e.g.~\cite{KS}).
Then, 
by computing the tangential weights around $8$ fixed points, one can check that $M^{8}$ is a GKM manifold, i.e., for every fixed points $p$, the tangential weights have the property $U_{4}^{k(p)}$ with $k(p)\ge 2$.
Moreover, the tangential weights on the fixed points $p=([1:0],[0:0:0:1])$ 
and $q=([0:1],[0:0:0:1])$ are as follows:
\begin{align*}
& T_{p}M=V(\alpha)\oplus V(\beta+\gamma)\oplus V(\beta)\oplus V(\gamma); \\
& T_{q}M=V(-\alpha)\oplus V(\alpha+\beta+\gamma)\oplus V(\beta)\oplus V(\gamma).
\end{align*}
In this case, $k(p)=2$ and $k(q)=3$.
\end{example}

\begin{remark}
\label{sign_ambiguities}
Let $M$ be a $2m$-dimensional GKM manifold with the $n$-dimensional torus $T$-action.
If there is a $T$-invariant almost complex structure $J$ on $M$, then the identification \eqref{cpx_on_p} can be induced from $J$, see~\cite{GZ, GHZ}.
Also see~\cite[Section 12 and Proposition 13.5]{BoHi} about $G$-invariant (almost) complex structure of $G/H$.
For the case of $m=n$, called a {\it torus manifold} (e.g.~$M=S^{2n}\simeq SO(2n+1)/SO(2n)$ with the standard $T^{n}$-action for $n\ge 2$),
the identification \eqref{cpx_on_p} is determined by choosing the omni-orientation of the characteristic submanifolds in the torus manifold, see~\cite{MMP, Ku16}.
For the other cases (e.g.~$M=\mathbb{H}P^{n}\simeq Sp(n+1)/Sp(n)\times Sp(1)$ for $n\ge 2$), to avoid the sign ambiguity of the representations, 
we may choose some identification \eqref{cpx_on_p} for each fixed point.
\end{remark}

\subsection{The torus action on the homogeneous space $G/H$}
\label{sect:2.3}
Let $G$ be a compact, connected, semi-simple Lie group, i.e., $\pi_{1}(G)$ is finite, and 
$T$ be its maximal torus, and $H$ be a closed, connected subgroup of $G$ such that $T\subseteq H\subset G$, i.e., a {\it maximal rank subgroup} of $G$.
It is known in this case that 
the homogeneous space $G/H$ is a $2m$-dimensional, simply connected manifold satisfying $H^{odd}(G/H;\mathbb{Q})=0$, see e.g.~\cite{MiTo, Bu} and~\cite[Theorem 2.4]{GHZ}.
Below we recall the description of the tangential weights of the $T$-action on $G/H$ by following these references.

We first recall some basic facts from the theory of Lie groups.
We denote the Lie algebras of $T\subseteq H\subset G$ by $\mathfrak{t}\subseteq \mathfrak{h}\subset \mathfrak{g}$.
Let $\exp:G\to \mathfrak{g}$ be the exponential map, and $\Lambda_{G}:=\exp^{-1}(e)\subset \mathfrak{t}$ be the {\it integer lattice}, where $e\in T\subset G$ is the identity element.
Then, we have an identification $T\simeq \mathfrak{t}/\Lambda_{G}$ and ${\rm Hom}(T,S^{1})\simeq \Lambda_{G}^{*}:= {\rm Hom}(\Lambda_{G},\mathbb{Z})\simeq \mathbb{Z}^{n}$.
The $\mathbb{Z}$-module $\Lambda^{*}_{G}\subset \mathfrak{t}^{*}$ is called a {\it weight lattice} of $G$, where $\mathfrak{t}^{*}={\rm Hom}(\mathfrak{t},\mathbb{R})$ is the dual of $\mathfrak{t}$.
Note that if $G$ is simply connected, then $\Lambda^{*}_{G}$ is spanned by the fundamental weights, i.e. dual to simple roots (see~\cite[Chapter 5 Theorem 6.36]{MiTo}).
If $G$ is not simply connected, then by taking the universal covering $p:\widetilde{G}\to G$ and the dual $dp^{*}$ of its differential $dp$ on the identities of maximal tori, 
we may regard $\Lambda_{G}^{*}\subset \Lambda_{\widetilde{G}}^{*}$ as finite index $>1$ embedding of free abelian subgroups (see~\cite[Chapter 5 Theorem 4.9]{MiTo} or~\cite[Section 19]{Bu}). 
(Here we identify the map with the induced map for the de Rham cohomology groups $H^{1}(T;\R)\to H^{1}(\widetilde{T};\R)$, restricted to the respective lattices.)
Let $\Delta_{G}\subset \Lambda_{G}^{*}\subset \mathfrak{t}^{*}$ be the root system of $G$ with respect to the maximal torus $T$.
Because the linear relations among the root systems do not change by taking the finite covering of $G$, that is, one has $k(\widetilde{G}/H)=k(G/H)$, it is enough to consider the case when $G$ is simply connected for the purpose of this paper.
Therefore, we can assume $\pi_{1}(G)=0$ in this paper without loss of generality.

Let $G$ be a simply connected, compact Lie group.
The set of the fixed points $(G/H)^{T}$ can be identified with the finite set of the quotient $W_{G}/W_{H}$ of the Weyl groups for the maximal torus $T$, where $W_{G}:=N_{G}(T)/T$ and $W_{H}:=N_{H}(T)/T$.
Let $p_0=eH\in (G/H)^{T}\simeq W_{G}/W_{H}$ be the coset of the identity element $e\in G$.
In other words, $W_{G}$ acts on $(G/H)^{T}$ transitively, where the isotropy subgroup of $p_{0}$ is $W_{H}$. 
Then, the tangential representation at $wp_{0}$ is 
\begin{align*}
T_{wp_0}G/H=\bigoplus_{[\beta]\in \Delta_{G,H}/\{\pm 1\}} V_{[w\beta]},
\end{align*}
where $\Delta_{G}^{+}\setminus \Delta_{H}^{+}\simeq \Delta_{G,H}/\{\pm 1\}$ for positive roots $\Delta_{H}^{+}\subset \Delta_{G}^{+}$.
Note that $\Delta_{p_{0}}$ and $\Delta_{wp_{0}}$ for every $w\in W_{G}/W_{H}$
are isomorphic to $\Delta_{G,H}/\{\pm 1\}$ by being in the orbit of the  Weyl group transitive action. 
This implies the following proposition:

\begin{proposition}
\label{k-independence}
Let $G$ be a compact, connected semi-simple Lie group, $H$ be its maximal rank subgroup and $T\subseteq H\subset G$ be a maximal torus. 
Then $k(G/H)=k(gH)$ for any $gH\in (G/H)^{T}$.
\end{proposition}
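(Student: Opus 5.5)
The plan is to show that the weight sets at any two fixed points differ by an invertible linear map. Since $k(p)$ depends only on which subsets of $S_{p}$ are linearly independent, it will then be constant on $(G/H)^{T}$. This forces $k(G/H)=\min_{p}k(p)=k(gH)$ for every fixed point $gH$.

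First, reduce to the setting described above. By the remark preceding the proposition, passing to the universal cover $\widetilde{G}$ does not change the roots as elements of $\mathfrak{t}^{*}$, only the ambient lattice. So we may assume $\pi_{1}(G)=0$ and identify $(G/H)^{T}$ with $W_{G}/W_{H}$, with $p_{0}=eH$.

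Next, any fixed point has the form $gH=wp_{0}$, where $w=nT$ for some $n\in N_{G}(T)$. Left translation $L_{n}\colon G/H\to G/H$ is a diffeomorphism sending $p_{0}$ to $wp_{0}$. It is equivariant with respect to the automorphism $c_{n}\colon t\mapsto ntn^{-1}$ of $T$, since $L_{n}(tx)=c_{n}(t)L_{n}(x)$. Its differential therefore identifies $T_{p_{0}}G/H$ with $T_{wp_{0}}G/H$ after twisting the $T$-action by $c_{n}$. Consequently the weights at $wp_{0}$ are the images under $w\colon\mathfrak{t}^{*}\to\mathfrak{t}^{*}$ of the weights at $p_{0}$, up to sign. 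This is exactly the displayed decomposition $T_{wp_{0}}G/H=\bigoplus V_{[w\beta]}$. Hence $S_{wp_{0}}=w(S_{p_{0}})$ modulo signs, with $S_{p_{0}}=\Delta_{G}^{+}\setminus\Delta_{H}^{+}$.

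Finally, $w$ acts on $\mathfrak{t}^{*}$ as a linear isomorphism, and changing the signs of vectors does not affect linear independence. Thus any $q$ elements of $S_{p_{0}}$ are linearly independent if and only if their images in $S_{wp_{0}}$ are. The bijection $\beta\mapsto\pm w\beta$ preserves cardinality because pairwise distinct classes in $\Delta_{G,H}/\{\pm1\}$ map to pairwise distinct classes. It follows that $k(wp_{0})=k(p_{0})$ for all $w$. Since $W_{G}$ acts transitively on the fixed points, $k$ is constant on $(G/H)^{T}$, which proves the statement.

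The only step needing care is the twisted equivariance of $L_{n}$ and the resulting formula for the weights at $wp_{0}$. This is already recorded in the displayed isotropy decomposition above, so I will simply invoke it. No genuine obstacle is expected.
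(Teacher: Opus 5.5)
Your proposal is correct and follows the paper's argument: the paper likewise deduces the proposition from the isotropy decomposition $T_{wp_0}G/H=\bigoplus V_{[w\beta]}$ and the transitivity of the $W_G$-action on $(G/H)^T$, so the weight sets at any two fixed points differ by the linear isomorphism $w$, up to signs. Your explicit check that $L_n$ is equivariant with respect to the twisting automorphism $c_n$ simply supplies the detail behind that decomposition, which the paper takes from the literature.
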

This proposition shows that if we compute the $k(G/H)$-independence on $\Delta_{G}^{+}\setminus \Delta_{H}^{+}\simeq \Delta_{G,H}/\{\pm 1\}$, then we have the $k(G/H)$-independence of all tangent spaces on the fixed points of $G/H$
(Cf.~Example~\ref{ex_depends_k(p)}).
In this paper, we determine the $k$-independence for all maximal rank homogeneous spaces $G/H$.

\subsection{The Borel-de Siebenthal theory}
\label{sect:2.4}

Let $G$ be a compact, connected semi-simple Lie group, $H$ be its maximal rank subgroup.
The classification of such pair $(G, H)$ is known by the work of Borel-de Siebenthal~\cite{BoSi}. 
In this subsection, we briefly recall the Borel-de Siebenthal theory.

Borel-de Siebenthal theory~\cite{BoSi} describes maximal subgroups of maximal rank in a simple Lie group $G$ as normalizers of certain elements or circle subroups in $G$.
The classification of the respective root subsystems is given in terms of (ordinary or extended) Dynkin diagram of $G$ as follows.
(Below we follow notation from~\cite{MiTo} and tables from~\cite{Y25}.)
Recall that $\alpha_{1},\dots,\alpha_{n}\in\mathfrak{g}^{*}$ are simple roots of $G$, and the dominant root $\widetilde{\alpha}$ can be written as
\begin{align}
\label{dominant root}
\widetilde{\alpha}=\sum_{i=1}^{n} m_{i}\alpha_{i}
\end{align}
for some non-negative integers $m_{1},\ldots, m_{n}(\ge 0)$.
We set the additional notation by denoting by $X_{i}$ the vertex opposed to the facet with normal vector $\alpha_{i}$ of the simplex 
\[
S(G):=\lb X\in \mathfrak{g}\mid \alpha_{i}(X)\geq 0,\ i=1,\dots,n,\ \widetilde{\alpha}(X)\leq 1\rb\subset \mathfrak{g}.
\]
Also let $x_{i}:=\exp(X_{i})\in G$, and let $N(x)$ be the normalizer of $x\in G$.

\begin{theorem}[{\cite[Thm 7.16]{MiTo}}]
\label{thm:BdS}
The Dynkin diagram of a maximal subgroup of maximal rank for a simple Lie group $G$ is obtained from that of $G$ as follows:
\begin{enumerate}
\item If $m_{i}$ in \eqref{dominant root} is prime, then the Dynkin diagram of $G_{i}=N(x_{i})$ is obtained by deleting $\alpha_{i}$ from the extended Dynkin diagram, which is the Dynkin diagram of $G$ with $-\widetilde{\alpha}$ added;
\item If $m_{i}=1$ in \eqref{dominant root}, then $N(\exp X_{i}/2)$ is locally a direct product of $S^{1}$ and a semisimple subgroup $H$ of corank $1$ the Dynkin diagram of $H$ is obtained by deleting $\alpha_{i}$ from the Dynkin diagram of $G$.
\end{enumerate}
\end{theorem}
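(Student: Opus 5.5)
Since this statement is quoted from \cite[Thm 7.16]{MiTo}, the plan is to reproduce the classical Borel--de Siebenthal argument, working with the root subsystem $\Delta_{H}\subset\Delta_{G}$ of a closed, connected subgroup $T\subseteq H\subset G$. The first step is to show that a maximal such $H$ is the identity component of a centralizer. Let $Z(H)$ be the center of $H$; it lies in $T$, since $T$ is maximal in $H$. The centralizer $Z_{G}(Z(H))$ contains $H$, and it is not all of $G$ (otherwise $Z(H)\subset Z(G)$, and a proper maximal rank subgroup has center strictly larger than $Z(G)$). By maximality, $H=Z_{G}(Z(H))_{0}$, or its normalizer $N(x)$ in the formulation of Theorem~\ref{thm:BdS}. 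I would then split into two cases: either $Z(H)$ contains a circle, or $Z(H)$ is finite modulo $Z(G)$.

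\emph{Finite-center case.} Pick $x=\exp X\in Z(H)\setminus Z(G)$; maximality gives $H=Z_{G}(x)_{0}$. The roots of $Z_{G}(x)$ are those $\alpha\in\Delta_{G}$ with $\alpha(X)\in\Z$. Using the affine Weyl group, conjugate $X$ into the alcove $S(G)$. Then the root system of $Z_{G}(x)$ has as simple system the simple roots $\alpha_{i}$ with $\alpha_{i}(X)=0$, together with $-\widetilde{\alpha}$ if $\widetilde{\alpha}(X)=1$. Equivalently, its Dynkin diagram is the extended diagram with the vertices not vanishing on the face containing $X$ removed. Two reductions follow.
\begin{enumerate}
\item If the face containing $X$ is not a vertex, the resulting subsystem is strictly contained in the subsystem of a vertex adjacent to that face. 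So maximality forces $X=X_{i}$, and $H$ has diagram equal to the extended diagram with $\alpha_{i}$ deleted.
\item If $m_{i}=ab$ with $a,b>1$, take $X'=aX_{i}$. Then $\alpha(X')\in\Z$ whenever $\alpha(X_{i})\in\Z$, and the containment is strict, so $Z_{G}(x_{i})$ is not maximal. Conversely, for $m_{i}=p$ prime, any strictly larger closed subsystem would have to contain a root with $\alpha_{i}$-coefficient not divisible by $p$. Combined with $\pm\widetilde{\alpha}$, this generates everything, because $\gcd$ considerations with the coefficient $p$ force coefficient $1$ roots to appear.
\end{enumerate}
I expect this converse direction in item (2) to be the main obstacle. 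I would first try to handle it via the $\Z/p$-grading of $\mathfrak{g}$ by the $\alpha_{i}$-coefficient mod $p$, together with irreducibility of each graded piece as an $\mathfrak{h}$-module.

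\emph{Circle case.} Let $S^{1}\subset Z(H)$ be generated by $X$ with $\exp(\R X)$ a circle. The roots of $Z_{G}(S^{1})$ are those with $\alpha(X)=0$, which is a Levi subsystem. After Weyl conjugation into the closed chamber, its diagram is that of $G$ with the simple roots $\alpha_{i}$ satisfying $\alpha_{i}(X)>0$ deleted. Maximality forces deleting exactly one $\alpha_{i}$. Moreover, if $m_{i}\geq 2$, the Levi is contained in $N(x_{i})$ from the finite-center case (extended diagram minus $\alpha_{i}$), so it is not maximal. Hence $m_{i}=1$. In that case $\exp(X_{i}/2)$ has centralizer with roots $\{\alpha:\alpha(X_{i})\in 2\Z\}$, which for $m_{i}=1$ is exactly $\{\alpha:\alpha(X_{i})=0\}$. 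This gives item (2): the local product of $S^{1}$ with the semisimple corank-one subgroup whose diagram is that of $G$ minus $\alpha_{i}$.
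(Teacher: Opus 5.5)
The paper gives no proof of this statement; it is quoted from \cite[Thm 7.16]{MiTo}, and your outline is the classical Borel--de Siebenthal argument behind that reference. The structure is right, but one lemma is missing, and you rely on it twice.

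\emph{First use.} In the opening step you assert, in a parenthesis, that a proper connected maximal-rank subgroup $H\supseteq T$ has $Z(H)\supsetneq Z(G)$. Since $Z(H)=\bigcap_{\alpha\in\Delta_H}\ker\alpha$, duality turns this into the claim that a proper closed subsystem $\Delta_H\subsetneq\Delta_G$ never spans the whole root lattice $\mathbb{Z}\Delta_G$. That is not formal. It fails for non-closed subsystems: the short roots of $G_2$ span the $G_2$ root lattice. So any proof must use closedness, and without this claim the finite-center case has no $x\in Z(H)\setminus Z(G)$ to start from.

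\emph{Second use.} The same point returns in the converse for $m_i=p$ prime. Write $n_i(\lambda)$ for the $\alpha_i$-coefficient; then $\mathbb{Z}\Delta_{G_i}=\{\lambda\in\mathbb{Z}\Delta_G : p\mid n_i(\lambda)\}$. Your gcd remark therefore shows only that a closed $\Psi\supsetneq\Delta_{G_i}$ satisfies $\mathbb{Z}\Psi=\mathbb{Z}\Delta_G$. It does not by itself put a root with $n_i=1$ into $\Psi$. You also do not argue that the Levi subgroup is maximal when $m_i=1$.

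All of this is settled by one lemma: if $\Psi\subseteq\Delta_G$ is closed, then $\Delta_G\cap\mathbb{Z}\Psi=\Psi$.
\begin{enumerate}
\item The set $B=(\Delta_G\setminus\Psi)\cap\mathbb{Z}\Psi$ is $W(\Psi)$-stable.
\item A $\Psi$-dominant element of $\mathbb{Z}\Psi$ is a nonnegative integer combination of a base $\Pi$ of $\Psi$. So if $B\neq\emptyset$, pick $\beta=\sum_{\gamma\in\Pi}c_\gamma\gamma\in B$ with all $c_\gamma\geq 0$ and $\sum c_\gamma$ minimal.
\item From $0<(\beta,\beta)=\sum c_\gamma(\beta,\gamma)$, choose $\gamma$ with $c_\gamma>0$ and $(\beta,\gamma)>0$. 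Then $\beta-\gamma\in\Delta_G\cup\{0\}$.
\item $\beta-\gamma$ is nonzero because $\beta\notin\Psi$, and it is not in $\Psi$ because $\Psi$ is closed. So it is a smaller element of the same kind, a contradiction.
\end{enumerate}
Consequently $Z(H)=Z(G)$ forces $H=G$. Likewise, any closed subsystem strictly containing $\Delta_{G_i}$ (for $m_i$ prime) or $\{\alpha : n_i(\alpha)=0\}$ (for $m_i=1$, where any extra root has $n_i=\pm 1$) spans $\mathbb{Z}\Delta_G$, and hence is all of $\Delta_G$.

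Two smaller points.
\begin{itemize}
\item Strictness in your reduction (2) needs a root with $n_i=b$. Such a root exists because $n_i$ drops by at most one at each step of a chain of simple-root subtractions from $\widetilde{\alpha}$ down to a simple root.
\item In reduction (1), the vertex adjacent to a face may be $0$ or have $m_j=1$, in which case it gives all of $G$. It is cleaner to note that finiteness of $Z(H)$ forces $\Delta_H$ to have rank $n$. Hence $X$ is a vertex $X_i$, with $m_i\geq 2$ because $x\notin Z(G)$.
\end{itemize}
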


\begin{remark}
By using the Borel-de Siebenthal theory,
on the level of Dynkin diagrams, a maximal rank subgroup $H(\subset G)$ is determined by a sequence of certain added or removed vertices.
The isomorphism class of a subgroup does not depend on the choice of such a sequence, 
though the embedding depends on the the order of such operations.
However, it turns out that the choice of embedding does not affect independence.
\end{remark}

\subsection{Symmetric spaces}
\label{sect:2.5}

In order to state our main theorem, we also need the notion of symmetric spaces. Here, we quickly recall it.

\begin{definition}[{\cite[Chapter III, Section 6]{MiTo}}]
A homogeneous space $G/H$ admitting an involution $\sigma\in\Aut G$ is called a \textit{symmetric space}, namely, $G^{\sigma}_{e}\subseteq H\subset G^{\sigma}=\lb g\in G\mid \sigma g=g\rb$ holds, where $G^{\sigma}_{e}$ is the identity component of $G^{\sigma}$.
\end{definition}

There is a well-known criterion for $G/H$ to be a symmetric space in terms of Lie algebras, which we recall next.
Let $\mathfrak{g}$ be a
Lie algebra of a simply connected, compact, simple Lie group $G$.
It has a decomposition into the Cartan subalgebra $C(\mathfrak{g})$ of $\mathfrak{g}$ and characters $\mathfrak{g}_{\alpha}$ of $C(\mathfrak{g})$ as follows:
\begin{equation}\label{eq:liedec}
\mathfrak{g}=C(\mathfrak{g})\oplus\bigoplus\limits_{\alpha\in\Delta^{+}_{G}} \mathfrak{g}_{\alpha}.
\end{equation}
Any Lie subalgebra $\mathfrak{h}$ in $\mathfrak{g}$ is the direct sub-summand of \eqref{eq:liedec} given by the embedded root system $\Delta_{H}\subseteq \Delta_{G}$.
Since $\mathfrak{h}$ is a Lie subalgebra, one has
\[
[\mathfrak{h},\mathfrak{h}]\subset \mathfrak{h}.
\]
Since $\mathfrak{g}$ is semi-simple (and therefore reductive Lie algebra), the Killing form on $\mathfrak{g}$ is nondegenerate, and the respective orthogonal complement to $\mathfrak{h}$ in $\mathfrak{g}$, say $\mathfrak{m}$, is $\mathfrak{h}$-invariant.
Therefore, one has
\[
[\mathfrak{h},\mathfrak{m}]\subset \mathfrak{m}.
\]

Recall the following proposition.
\begin{proposition}[{\cite[\S B.1]{Ba}}]\label{pr:symm}
Let $\mathfrak{h}$ be any Lie subalgebra $\mathfrak{g}$ of any semisimple Lie algebra. 
Then the following are equivalent:
\begin{enumerate}
\item $ [\mathfrak{m},\mathfrak{m}]\subset \mathfrak{h}$, where $\mathfrak{m}$ is the orthogonal complement to $\mathfrak{h}$ with respect to the Killing form;
\item The Lie algebra $\mathfrak{g}$ has an involution $\sigma$ to which $\mathfrak{h}$ is the eigenspace with value $1$ and $\mathfrak{m}$ is the eigenspace with value $-1$ in the direct sum $\mathfrak{h}\oplus \mathfrak{m}$;
\item The homogeneous space $G/H$ is a symmetric space, where $G$ is a Lie group and $H$ is its closed subgroup such that the Lie algebras of $G$ and $H$ are $\mathfrak{g}$ and $\mathfrak{h}$, respectively.
\end{enumerate}
\end{proposition}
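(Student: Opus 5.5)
The plan is to establish the cycle of implications (1)$\Rightarrow$(2)$\Rightarrow$(3)$\Rightarrow$(1), working entirely at the Lie algebra level and using the nondegeneracy of the Killing form $B$ together with the facts already recorded above: $[\mathfrak{h},\mathfrak{h}]\subset\mathfrak{h}$, $[\mathfrak{h},\mathfrak{m}]\subset\mathfrak{m}$, and $\mathfrak{g}=\mathfrak{h}\oplus\mathfrak{m}$. This last decomposition needs $B|_{\mathfrak{h}}$ nondegenerate. That holds in our setting because $\mathfrak{h}$ is compact (or is a sum of root spaces containing the Cartan subalgebra), so $B$ is definite on the compact real form. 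I will assume this in general, as is implicit in the statement.

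\emph{(1)$\Rightarrow$(2).} Define $\sigma:=\mathrm{id}$ on $\mathfrak{h}$ and $\sigma:=-\mathrm{id}$ on $\mathfrak{m}$. It is linear with $\sigma^{2}=\mathrm{id}$, so it remains to check that $\sigma[X,Y]=[\sigma X,\sigma Y]$. Split by cases:
\begin{itemize}
\item for $X,Y\in\mathfrak{h}$, use $[\mathfrak{h},\mathfrak{h}]\subset\mathfrak{h}$;
\item for $X\in\mathfrak{h}$, $Y\in\mathfrak{m}$, the bracket lies in $\mathfrak{m}$, so both sides equal $-[X,Y]$;
\item for $X,Y\in\mathfrak{m}$, hypothesis (1) gives $[X,Y]\in\mathfrak{h}$, and both sides equal $[X,Y]$.
\end{itemize}

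\emph{(2)$\Rightarrow$(1).} If $\sigma$ is an involutive automorphism with eigenspaces $\mathfrak{h}$ ($+1$) and $\mathfrak{m}$ ($-1$), then for $X,Y\in\mathfrak{m}$ we get $\sigma[X,Y]=[-X,-Y]=[X,Y]$, hence $[X,Y]\in\mathfrak{h}$. This gives (1) directly, so (1) and (2) are equivalent. For consistency, note that $\sigma$ preserves $B$ (any automorphism does), which forces its eigenspaces for $+1$ and $-1$ to be $B$-orthogonal. So the given $\mathfrak{m}$ really is the orthogonal complement.

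\emph{(2)$\Rightarrow$(3).} Pass to a group. Take $G$ simply connected, as the paper assumes without loss of generality. Then $\sigma$ integrates to an involution of $G$, still denoted $\sigma$. The Lie algebra of $G^{\sigma}$ is the $+1$-eigenspace $\mathfrak{h}$, so $G^{\sigma}_{e}$ is the connected subgroup with Lie algebra $\mathfrak{h}$. Consequently any closed $H$ with Lie algebra $\mathfrak{h}$ satisfying $G^{\sigma}_{e}\subseteq H\subset G^{\sigma}$ (in particular connected $H$) makes $G/H$ a symmetric space. If $G$ is not simply connected, I will work on the universal cover and note that the statement is about Lie algebras. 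The one global point needing care is that $\sigma$ normalizes the relevant lattice, and this is the step I expect to be the main obstacle. To handle it, I would invoke the standard fact that automorphisms of $\mathfrak{g}$ lift to the simply connected group.

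\emph{(3)$\Rightarrow$(2).} Given an involution $\sigma\in\Aut G$ with $G^{\sigma}_{e}\subseteq H\subset G^{\sigma}$, differentiate to get $d\sigma$, an involutive automorphism of $\mathfrak{g}$. Its $+1$-eigenspace is $\mathrm{Lie}(G^{\sigma})=\mathfrak{h}$. Its $-1$-eigenspace is $B$-orthogonal to $\mathfrak{h}$ by $\sigma$-invariance of $B$, and it is complementary to $\mathfrak{h}$, so it equals $\mathfrak{m}$. This gives (2) and closes the cycle.
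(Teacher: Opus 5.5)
The paper gives no proof of this proposition. It is quoted from \cite[\S B.1]{Ba} and used only in Lemma~\ref{auxiliary lemma}, in the direction (1)$\Rightarrow$(3) with $G$ simply connected and $H$ connected. Your argument is the standard self-contained one, and its steps check out:
\begin{enumerate}
\item $\sigma=\mathrm{id}_{\mathfrak{h}}\oplus(-\mathrm{id}_{\mathfrak{m}})$ is an automorphism by your three bracket cases;
\item (2)$\Rightarrow$(1) is immediate;
\item in (3)$\Rightarrow$(2), the $-1$-eigenspace of $d\sigma$ is $B$-orthogonal to $\mathfrak{h}$ and of complementary dimension, so it equals $\mathfrak{m}$.
\end{enumerate}
Compared with the bare citation, your version makes explicit two hypotheses the statement leaves implicit. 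First, nondegeneracy of $B|_{\mathfrak{h}}$ is genuinely needed, not just convenient. For a Borel subalgebra $\mathfrak{b}\subset\mathfrak{sl}_2(\C)$ one has $\mathfrak{m}=\mathfrak{b}^{\perp}=\mathfrak{n}\subset\mathfrak{b}$ and $[\mathfrak{n},\mathfrak{n}]=0$, so (1) holds but (2) fails. In the compact setting of the paper $B$ is definite, so this is harmless. Second, (3) has to be read for suitable $G$ and $H$. You take $G$ simply connected and $G^{\sigma}_{e}\subseteq H\subseteq G^{\sigma}$, which is exactly the case the paper needs.

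The one weak spot is your aside on non-simply-connected $G$. The fact that automorphisms of $\mathfrak{g}$ integrate to the simply connected group $\widetilde{G}$ does not address it. To obtain an involution of $G=\widetilde{G}/Z$ you need $\sigma(Z)=Z$, and this can fail for outer involutions. For example, take the reflection involution of $\mathfrak{so}(2n)$ with fixed algebra $\mathfrak{so}(2n-1)$, with $n$ even. It interchanges the two half-spin representations, hence their kernels, so it does not descend to a half-spin group. It is also the only involution with that fixed algebra, so (2)$\Rightarrow$(3) really fails there. In the maximal-rank situation of the paper the issue disappears for a different reason. There $\sigma$ fixes $\mathfrak{t}$ pointwise, and an automorphism fixing a Cartan subalgebra pointwise equals $\mathrm{Ad}(s)$ for some $s\in T$, which is defined on every connected $G$ with Lie algebra $\mathfrak{g}$. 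So either drop the aside and state (3) for simply connected $G$, or replace it by this innerness argument.
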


The classification of symmetric spaces was obtained by E.~Cartan~\cite{C27}.
In this paper, we restrict our attention to the symmetric spaces that are compact and have nonzero Euler characteristic with ${\rm rank}(G)={\rm rank}(H)>2$, i.e., maximal rank compact symmetric spaces with rank $>2$. 
Thus, by Cartan's classification, we may only consider the following symmetric spaces
(see e.g.~\cite[Ch.X, \S6, Table V]{He} for classical types, and \cite[Ch.V, \S7, Remark 7.18--7.19]{MiTo} and \cite{Y25} for exceptional types).

\begin{theorem}
\label{thm:cart}
Let $G/H$ be a maximal rank compact symmetric spaces with rank $>2$.
Then, the pair $(G,H)$ corresponds to one of the following list.
\begin{center}
\begin{tabular}{|l|l|l|l|l|} \hline
   label & pair & $G$ & $H$ &  \\ \hline
   {\rm AIII} & $(A_{n},A_{i}\times A_{n-1-i}\times T^{1})$ & $SU(n+1)$ & $S(U(i+1)\times U(n-i))$ & $0\le i\le n-1$ \\
   {\rm BDI} & $(B_{n}, D_{i}\times B_{n-i})$ & $SO(2n+1)$ & $SO(2i)\times SO(2n-2i+1)$ &  $1\le i\le n$ \\ 
    & $(D_{n}, D_{i}\times D_{n-i})$ & $SO(2n)$ & $SO(2i)\times SO(2n-2i)$ &  $1\le i\le n-1$  \\
    {\rm DIII} & $(D_{n}, A_{n-1}\times T^{1})$ & $SO(2n)$ & $U(n)$ & \\
    {\rm CI} & $(C_{n}, A_{n-1}\times T^{1})$ & $Sp(n)$ & $U(n)$ &  \\
    {\rm CII} & $(C_{n},C_{i}\times C_{n-i})$ & $Sp(n)$ & $Sp(i)\times Sp(n-i)$ & $1\le i\le n-1$ \\ 
    {\rm EII} & $(E_{6}, A_{5}\times A_{1})$ & $E_{6}$ & $(SU(6)\times SU(2))/\mathbb{Z}_{2}$ & \\
    {\rm EIII} & $(E_{6}, D_{5}\times T^{1})$ & $E_{6}$ & $(Spin(10)\times T^{1})/\mathbb{Z}_{4}$ & \\
    {\rm EV} & $(E_{7},A_{7})$ & $E_{7}$ & $SU(8)/\mathbb{Z}_{2}$ & \\
    {\rm EVI} & $(E_{7},D_{6}\times A_{1})$ & $E_{7}$ & $(Spin(12)\times SU(2))/\mathbb{Z}_{2}$ & \\
    {\rm EVII} & $(E_{7},E_{6}\times T^{1})$ & $E_{7}$ & $(E_{6}\times T^{1})/\mathbb{Z}_{3}$ & \\
    {\rm EVIII} & $(E_{8},D_{8})$ & $E_{8}$ & $Ss(16)$ & \\
    {\rm EIX} & $(E_{8},E_{7}\times A_{1})$ & $E_{8}$ & $(E_{7}\times SU(2))/\mathbb{Z}_{2}$ & \\
   {\rm FI} & $(F_{4}, C_{3}\times A_{1})$ & $F_{4}$ & $(Sp(3)\times Sp(1))/\mathbb{Z}_{2}$ & \\
   {\rm FII} & $(F_{4}, B_{4})$ & $F_{4}$ & $Spin(9)$ & \\
   \hline
 \end{tabular}
\end{center}
 
Here, $n\ge 3$, $Ss(16)\simeq Spin(16)/\mathbb{Z}_{2}$ is the semi-spinor group (see~\cite[Theorem 2.1]{IT}), and we identify $A_{1}\simeq B_{1}\simeq C_{1}$, $D_{1}\simeq T^{1}$, $B_{2}\simeq C_{2}$, $D_{2}\simeq A_{1}\times A_{1}$ and $D_{3}\simeq A_{3}$.
\end{theorem}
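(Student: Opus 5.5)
This is essentially Cartan's classification restricted to equal rank, so the plan is to derive the table rather than re-prove Cartan's theorem from scratch.

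\emph{Step 1: reduce to an involutive automorphism of $\mathfrak{g}$.} By Proposition~\ref{pr:symm}, $G/H$ symmetric means that $\mathfrak{h}$ is the fixed-point algebra of an involution $\sigma$ of $\mathfrak{g}$.

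\emph{Step 2: show $\sigma$ is inner.} The maximal rank condition gives $\mathfrak{t}\subseteq\mathfrak{h}$, so $\sigma$ fixes $\mathfrak{t}$ pointwise. Any automorphism of a simple Lie algebra that is trivial on a Cartan subalgebra acts on each root space $\mathfrak{g}_\alpha$ by a scalar. It is therefore $\mathrm{Ad}(\exp X)$ for some $X\in\mathfrak{t}$, and in particular it is inner. Since $\sigma^2=1$, we get $\alpha(X)\in\pi i\mathbb{Z}$ for all roots $\alpha$. Hence
\[
\Delta_H=\{\alpha\in\Delta_G:\alpha(X)\in 2\pi i\mathbb{Z}\},
\]
and $\mathfrak{m}$ is spanned by the root spaces with $\alpha(X)\in \pi i(2\mathbb{Z}+1)$.

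\emph{Step 3: normalize $X$ and classify.} Conjugating by the Weyl group and translating by the lattice, we may assume $X/(\pi i)$ lies in the alcove $2S(G)$. The parity condition on the root values then forces $X=\pi i\,X_j$ with $m_j\in\{1,2\}$ (the case $X=0$ gives $H=G$). This is exactly the order-two case of the Borel--de Siebenthal description in Theorem~\ref{thm:BdS}:
\begin{itemize}
\item if $m_j=2$, the Dynkin diagram of $H$ is the extended diagram with $\alpha_j$ deleted;
\item if $m_j=1$, it is the ordinary diagram with $\alpha_j$ deleted, together with a central $T^1$.
\end{itemize}
The converse also holds: in each such case the grading by $\alpha(X)/(\pi i) \bmod 2$ yields $[\mathfrak{m},\mathfrak{m}]\subset\mathfrak{h}$, so the space is symmetric.

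\emph{Step 4: run through the extended Dynkin diagrams.} Going through $A_n$, $B_n$, $C_n$, $D_n$ with their coefficients $m_i$, and through $E_6$, $E_7$, $E_8$, $F_4$ (note that $G_2$ has rank $2$ and is excluded), produces exactly the rows of the table:
\begin{itemize}
\item $A_n$: all $m_i=1$, giving AIII;
\item $B_n$: $m_1=1$ gives $D_0\times B_n$-type with $T^1$, i.e.\ $SO(2)\times SO(2n-1)$, while $m_i=2$ gives $D_i\times B_{n-i}$;
\item $C_n$: $m_n=1$ gives $A_{n-1}\times T^1$, and the others give $C_i\times C_{n-i}$;
\item $D_n$: this yields DI and DIII;
\item exceptional types: EII through FII, read off from the marks.
\end{itemize}
The global groups $H$ listed in the table come from \cite{He}, \cite{MiTo} and \cite{Y25}.

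The main obstacle is Step 3, the alcove normalization: one must check that the order-two condition leaves no other vertices or interior points of the alcove. In practice I would cite Cartan and Helgason's table for this and verify only the root-subsystem identification, since the theorem is stated as a recollection of a known classification.
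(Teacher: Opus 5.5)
\textbf{Gap in Step 3.} Your route differs from the paper's. The paper does not derive the table: it quotes Cartan's classification (Helgason's Table~V, Mimura--Toda, Yokota) and only remarks afterwards that inner symmetric pairs correspond to deleting a node with $m_i=1$ or $2$. Your Steps 1--2, the alcove reduction, and the case list in Step 4 are fine. The enumeration asserted in Step 3, however, is false, and the failure is exactly at the point you flagged.

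Put $Y=X/(\pi i)$, and write $n_j(\alpha)$ for the coefficient of $\alpha_j$ in $\alpha$. The conditions are $\alpha(Y)\in\mathbb{Z}$ for all roots and $Y\in 2S(G)$, i.e.\ $Y=\sum_j s_j\omega_j^\vee$ with $s_j\in\mathbb{Z}_{\geq 0}$ and $s_0+\sum_j m_j s_j=2$. Two things go wrong.
\begin{enumerate}
\item For $m_j=2$ the solution is $Y=\omega_j^\vee=2X_j$, not $X_j$. With $X=\pi i X_j=\pi i\,\omega_j^\vee/2$, the root spaces with $n_j(\alpha)=1$ get eigenvalue $i$, so $\sigma$ would have order $4$ (compare the factor $\tfrac12$ in case (2) of Theorem~\ref{thm:BdS}).
\item There are solutions of a different shape. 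One is $Y=2\omega_j^\vee$ with $m_j=1$, which gives $\sigma=\mathrm{id}$. The other is $Y=\omega_j^\vee+\omega_k^\vee$ with $j\neq k$ and $m_j=m_k=1$. This is the midpoint of an edge of $2S(G)$, and it gives a nontrivial involution fixing exactly the roots with $n_j(\alpha)\equiv n_k(\alpha)\pmod 2$. Such points occur for $A_n$, $D_n$ and $E_6$. For instance, in $A_2$ the point $Y=\omega_1^\vee+\omega_2^\vee$ gives $\mathfrak{h}=\mathfrak{t}\oplus\mathfrak{g}_{\alpha_1+\alpha_2}\oplus\mathfrak{g}_{-\alpha_1-\alpha_2}$.
\end{enumerate}
So the claim ``the parity condition forces $X=\pi i X_j$'' does not hold.

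\textbf{How to repair it.} The missing ingredient is that $\sigma$ depends on $Y$ only modulo $2P^\vee$, not merely modulo $2Q^\vee$. The stabilizer of $2S(G)$ in $W\ltimes 2P^\vee$ is $\Omega\cong P^\vee/Q^\vee$. It acts on the extended Dynkin diagram preserving the marks, and simply transitively on $\{-\widetilde{\alpha}\}\cup\{\alpha_j : m_j=1\}$. Take an element of $\Omega$ moving node $j$ to the extended node. It sends $2\omega_j^\vee$ to $0$, and sends $\omega_j^\vee+\omega_k^\vee$ to $\omega_{k'}^\vee$ for some $k'$ with $m_{k'}=1$. Equivalently, deleting two mark-one nodes of the extended diagram and adding a $T^1$ gives the same subsystem as deleting one mark-one node of the ordinary diagram. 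With this step added, the nontrivial classes are exactly $Y=\omega_j^\vee$ with $m_j\in\{1,2\}$, and your Step 4 then produces the table.

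Two small slips. For $B_n$ with $m_1=1$ the subgroup is $D_1\times B_{n-1}=T^1\times B_{n-1}$, not ``$D_0\times B_n$''. Also, the converse direction is not needed for the statement. If you instead cite Cartan and Helgason for Step 3, as you suggest, your argument reduces to the paper's.
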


We will often use the list in Theorem~\ref{thm:cart}, in particular, to state the main theorem, Theorem~\ref{main}.

\begin{remark}
It is important for our computation that any symmetric space $G/H$ from the list in Theorem~\ref{thm:cart} satisfies the property that $H$ is a maximal proper connected closed subgroup in $G$.
The respective root system embedding can be described by the Borel-de Siebenthal classification~\cite{BoSi}.
One can see from it that a homogeneous space $G/H$ (of maximal rank) is a symmetric space of type $1$ in the classification list by Cartan with inner involution if and only if the respective maximal subgroup $H$ in $G$ corresponds to removing a vertex in the (extended) Dynkin diagram of $G$ with multiplicity $m_{i}=1$ or $2$ (see Theorem~\ref{thm:BdS}).
\end{remark}

\section{Main theorem and its application}
\label{sect:3}

In this section, we state our main theorem. 
We also give its proof (modulo root system independence computations in later sections) and an application to the topology of the orbit spaces.

\subsection{Main theorem}
\label{sect:3.1}
If we assume $G$ is simply connected, then the standard $T$-action on the simply connected homogeneous space $G/H$ is equivariantly diffeomorphic to the following product of the homogeneous spaces (see e.g.~\cite[Section 2.2]{Ku10}):
\begin{align}
\label{splitting}
G/H\cong G_{1}/H_{1}\times \cdots \times G_{r}/H_{r},
\end{align}
where $G_{i}$ is a compact, simply connected simple Lie group, $H_{i}$ be its maximal rank subgroup and there is an isomorphism $T\cong T_{1}\times \cdots \times T_{r}$ such that $T_{i}\subset H_{i}\subset G_{i}$ is a maximal torus for $i=1,\ldots, r$. 
Moreover, we can easily show the following proposition. (Recall that a torus manifold is equipped with a half-dimensional torus action.)
\begin{proposition}
\label{split_computation}
Assume that $G/H$ is not a torus manifold. 
Then, there is the following equality for the splitting \eqref{splitting}:
\begin{align*}
k(G/H)=\min \{k(G_{i}/H_{i})\ |\ 1\le i\le r \}.
\end{align*}
\end{proposition}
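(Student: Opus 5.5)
The plan is to reduce everything to linear algebra of the weight sets at a single fixed point. By Proposition~\ref{k-independence} it suffices to compute $k$ at $p_0=eH$. Under the splitting \eqref{splitting} and the isomorphism $T\cong T_1\times\cdots\times T_r$, we have $\mathfrak{t}^*=\bigoplus_i\mathfrak{t}_i^*$. The weight set at $p_0$ is the disjoint union $S=S_1\sqcup\cdots\sqcup S_r$, where $S_i=\Delta^+_{G_i}\setminus\Delta^+_{H_i}\subset\mathfrak{t}_i^*$ is the weight set of $G_i/H_i$ at its base point. So the claim becomes a statement about a direct sum of vector configurations lying in complementary subspaces. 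Here $k(S)$ means the largest $q$ such that every $q$ elements of $S$ are linearly independent, with $k(S)=|S|$ if $S$ itself is independent.

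\textbf{Upper bound.} Any subset of $S_i$ is also a subset of $S$. Hence if $q$ elements of $S_i$ are dependent, they are dependent in $S$, and $k(S)\le k(S_i)$ for each $i$. This gives $k(G/H)\le\min_i k(G_i/H_i)$.

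\textbf{Lower bound.} Let $q=\min_i k(G_i/H_i)$ and take any $q$ elements of $S$. Split them as $A_1\sqcup\cdots\sqcup A_r$ with $A_i\subset S_i$. Each $|A_i|\le q\le k(S_i)$, so each $A_i$ is linearly independent in $\mathfrak{t}_i^*$. Since the subspaces $\mathfrak{t}_i^*$ form a direct sum, a linear relation among the chosen vectors projects to a linear relation inside each $A_i$. Each such relation must then be trivial, so the union is independent and $k(S)\ge q$.

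\textbf{Main obstacle: the convention when $S$ itself is independent.} This is where the hypothesis that $G/H$ is not a torus manifold enters. If every $S_i$ were independent, i.e.\ each $G_i/H_i$ a torus manifold, then $S$ would be independent. In that case $k(S)=|S|=\sum_i|S_i|$, which exceeds the minimum whenever $r\ge2$. I would show the following: if some $S_j$ is dependent, then $k(S_j)<|S_j|$, and the two inequalities above hold verbatim with the same $q\le k(S_j)<|S_j|$. The only remaining case is that some $S_i$ are independent and others are not. Independent $S_i$ have $k(S_i)=|S_i|$, which may be smaller than the $k$ of a dependent factor. In that case I would check that the lower-bound argument is unaffected, since $|A_i|\le|S_i|$ is automatic. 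The upper bound also still holds, since $k(S)\le k(S_j)$ for a dependent $S_j$.

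There is one subtlety: the minimum could be attained by an independent factor with $|S_i|<k(S_j)$. Then the argument actually yields $k(S)=\min_{j\,\text{dependent}}k(S_j)$, which is at least $\min_i k(S_i)$. I would resolve this by reading $k$ of a torus factor under the convention in \eqref{independency} (every $q$ elements independent, with no dependent $(q+1)$-set existing). This makes $k(G_i/H_i)=\infty$, or at least not smaller than the others, for torus factors. Alternatively, I would restrict the minimum to non-torus factors. I would state this convention explicitly in the write-up.
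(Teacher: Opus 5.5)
Your argument is correct and is the natural one. Reduce to $p_0$ via Proposition~\ref{k-independence} and write $S=S_1\sqcup\cdots\sqcup S_r$ with $S_i\subset\mathfrak{t}_i^*$ and $\mathfrak{t}^*=\bigoplus_i\mathfrak{t}_i^*$. A dependent subset of one factor gives the upper bound, and splitting a relation into its components gives the lower bound. The paper gives no written proof here (it only says the proposition is easily shown), so there is no different route to compare against.

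The mixed case you isolate is not a matter of convention, though: the statement as written is false there. You should say so rather than rescue it by reinterpreting \eqref{independency}. The paper's convention is unambiguously $k=m$ for a torus manifold. This is fixed by the sentence after \eqref{independency}, by Theorem~\ref{main}(1) giving $k(\mathbb{C}P^n)=k(S^{2n})=n$, and by the remark after the proposition. Your reading $k=\infty$ for torus factors therefore contradicts the paper.

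Here is a concrete counterexample: $G=SU(3)\times SU(5)$, $H=S(U(2)\times U(1))\times S(U(2)\times U(3))$, i.e.\ $\mathbb{C}P^2\times Gr(2,5)$.
\begin{itemize}
\item $S_1=\{e_1-e_3,\ e_2-e_3\}$ is independent, so $k(G_1/H_1)=2$.
\item $S_2=\{f_a-f_b\mid a\in\{1,2\},\ b\in\{3,4,5\}\}$ is the graphic matroid of $K_{2,3}$, whose shortest cycle has length $4$, so $k(G_2/H_2)=3$.
\item By your own lower-bound argument every three elements of $S_1\sqcup S_2$ are independent, and a $4$-cycle in $S_2$ gives a dependent quadruple.
\end{itemize}
Hence $k(G/H)=3\neq 2=\min\{k(G_i/H_i)\}$, even though $G/H$ is not a torus manifold ($8$ weights, $\dim T=6$).

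What you actually prove is
$k(G/H)=\min\{k(G_i/H_i)\mid G_i/H_i\text{ not a torus manifold}\}$.
Since non-torus factors have $k\le 3$ by Theorem~\ref{main}, this can differ from the stated formula only if a factor $\mathbb{C}P^1$, $\mathbb{C}P^2$ or $S^4$ occurs. Write the result up in this corrected form.

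Also fix your first ``Upper bound'' paragraph, which asserts $k(S)\le k(S_i)$ for every $i$. That is false for an independent $S_i$, as the example shows. State it only for dependent $S_j$: there $k(S_j)<|S_j|$, so a dependent $(k(S_j)+1)$-subset of $S_j\subset S$ exists.
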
 

\begin{remark}
For the case of a product for torus manifolds, the independencies equate half dimensions of the respective manifolds.
Therefore, one has to replace minimum with the sum in the right hand side of the above formula.
\end{remark}

Proposition~\ref{split_computation} shows that to compute the independence of $G/H$,  it is enough to compute the independence of the simple factors $G_{i}/H_{i}$.
Any connected subgroup of $G$ fits into a saturated chain of maximal subgroups, where at every step the respective embedding is that of maximal proper subgroup.

The main theorem of this paper is as follows.

\begin{theorem}
\label{main}
Assume that $G$ is simply connected, simple Lie group and $H$ be its maximal rank 
subgroup such that ${\rm rank}(G)={\rm rank}(H)=\dim T=n$.
Then, $k(G/H)=2,3,n$ and the following holds:
\begin{enumerate}
\item $k(G/H)=n={\rm rank}(G)={\rm rank}(H)$ if and only if $(G,H)$ is one of the following types:
\begin{description}
\item[Type $A_{n}$] $(A_{n},A_{n-1}\times T^{1})$;
\item[Type $B_{n}$] $(B_{n}, D_{n})$.
\end{description}
\item $k(G/H)=3(\not=n)$ if and only if 
$(G,H)$ is one of the pairs appered in Theorem \ref{thm:cart} except for the above two cases, $(B_{n},D_{i}\times B_{n-i})$, $(C_{n},A_{n-1}\times T^{1})$ and $(F_{4}, C_{3}\times A_{1})$.
\item $k(G/H)=2$, otherwise (all of the other cases). 
\end{enumerate}
\end{theorem}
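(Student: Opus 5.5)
By Proposition~\ref{k-independence}, it suffices to compute the independence of the set $S=\Delta^{+}_{G}\setminus\Delta^{+}_{H}$ at the fixed point $eH$. The plan has three steps: show that $k\ge 3$ forces $G/H$ to be symmetric, compute $k$ exactly on the symmetric list, and exclude the values $4,\dots,n-1$.

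\textbf{Step 1: $k(G/H)\ge 3$ implies symmetric.} I would first show that $k(G/H)\ge 3$ forces condition (1) of Proposition~\ref{pr:symm}. Suppose $[\mathfrak m,\mathfrak m]\not\subset\mathfrak h$. Then there are roots $\alpha,\beta\in\Delta_{G,H}$ with $\alpha+\beta\in\Delta_G$ and $\alpha+\beta\notin\Delta_H$. The three classes $[\alpha],[\beta],[\alpha+\beta]$ all lie in $S$ up to sign. They are pairwise distinct, since $\alpha\neq\pm\beta$ and $\alpha+\beta\neq\pm\alpha,\pm\beta$, but they are linearly dependent. Hence $k=2$. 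So $k\ge 3$ implies $G/H$ is symmetric, and by Theorem~\ref{thm:cart} and the remark following it, $(G,H)$ lies in the Cartan list. For rank $\le 2$ one checks $k=2$ directly (or $k=n=2$ for $\mathbb{C}P^2$ and $S^4$), so it remains to treat the list of Theorem~\ref{thm:cart}.

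\textbf{Step 2: three-term dependencies on the symmetric list.} For a symmetric pair, a dependent triple in $S$ cannot be of the form $\gamma=\pm\alpha\pm\beta$ with all three in $\mathfrak m$. However, a dependent triple can still arise in two ways:
\begin{itemize}
\item from non-simply-laced relations such as $\alpha+2\beta$, or $2\alpha=\beta+\gamma$ with long/short roots;
\item from three roots spanning a rank-2 subsystem of type $B_2$ or $G_2$ that meets $\mathfrak m$ in more than two classes.
\end{itemize}
For the classical types I would encode roots $\pm e_i\pm e_j$, $e_i$, $2e_i$ as edges, half-edges and loops of a weighted signed graph on $\{1,\dots,n\}$. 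A minimal dependent triple then corresponds to a small subgraph: a triangle, a digon with a half-edge or loop, or similar. I would read off from the graph of $\Delta_{G,H}$ which cases contain such a configuration (this is the content planned as Lemma~\ref{lm:23dep}).

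The expected outcomes are as follows.
\begin{itemize}
\item For $(B_n,D_i\times B_{n-i})$ with $i<n$: the roots $e_a$ and $e_a\pm e_b$ with $a\le i<b$ give $e_a+e_b,\ e_a-e_b,\ e_a$ dependent (since $2e_a$ is their sum).
\item For $(C_n,A_{n-1}\times T^1)$: the roots $e_a+e_b,\ 2e_a,\ 2e_b$ are dependent.
\item In AIII, BDI-type $D$, DIII and CII, the $\mathfrak m$-roots have bipartite or otherwise triangle-free graphs with no such small circuits, so every triple is independent and $k\ge 3$.
\end{itemize}
For $F_4$ and $E_{6,7,8}$ I would do a finite check on explicit root coordinates. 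In $F_4/C_3\times A_1$ I expect to find a dependent triple coming from a $B_2$ subsystem; in $F_4/B_4$ and all the $E$-cases I expect none (the $E$-cases are simply laced, so any dependent triple would be a sum relation, which Step 1 has excluded).

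\textbf{Step 3: pinning down $k=3$ versus $k=n$.} To show $k=3$ rather than $\ge 4$, I would exhibit in each remaining case a dependent quadruple. A natural candidate is
\[
(e_a-e_c)+(e_b-e_d)=(e_a-e_d)+(e_b-e_c),
\]
a 4-cycle in the bipartite graph; the analogues $e_a\pm e_b$, $e_c\pm e_d$ serve for the $D$, $C$ and $E$ cases. Such a quadruple exists as soon as both sides of the partition are large enough. When this fails, the pair is $(A_n,A_{n-1}\times T^1)$ or $(B_n,D_n)$, where $|S|=n$ and the weights form a basis, giving $k=n$; this agrees with the classification in \cite{Ku09}. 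Together this yields the claimed trichotomy $k\in\{2,3,n\}$.

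\textbf{Main obstacle.} The main difficulty is the exhaustive but error-prone verification in Step 2 that no dependent triple exists in the positive cases, especially the exceptional ones. For these I would first try reducing to rank-2 subsystems spanned by pairs of $\mathfrak m$-roots, since any dependent triple lies in such a subsystem.
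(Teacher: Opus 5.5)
Your proposal is correct and follows essentially the paper's route. You use the bracket argument of Lemma~\ref{auxiliary lemma} to reduce to the Cartan list and circuit analysis on weighted signed graphs (Lemma~\ref{lm:23dep}) for types $A$--$D$. For the $E$-cases, a dependent triple in a simply laced system is a sum relation (Lemma~\ref{lm:triprk2}), which symmetry forbids (this is equivalent to the paper's parity-of-$n_i$ argument). You then exhibit explicit dependent quadruples. One small slip: for DIII the complement $\{e_a+e_b\}$ forms a complete graph, so it is not triangle-free. Its triangles are independent only because they are unbalanced, or you can simply apply your simply-laced argument there as well.
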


\begin{remark}
\label{about_torus_mainfold1}
Using the Borel-Hirzebruch result~\cite{BoHi}, we have that there are exactly two $Sp(n)$-invariant almost complex structures on $G/H=Sp(n)/Sp(n-1)\times T^{1} (\cong \mathbb{C}P^{2n-1})$.
Since one of them is the standard complex structure on $\mathbb{C}P^{2n-1}$, the $T^{n}$-action on $Sp(n)/Sp(n-1)\times T^{1}$ extends to the $T^{2n-1}$-action.
Namely, $Sp(n)/Sp(n-1)\times T^{1}$ is isomorphic to $SU(2n)/S(U(2n-1)\times U(1))$; therefore, this is a torus manifold.
Note that, by Theorem~\ref{main}, the homogeneous GKM manifold of type $(C_{n},C_{n}\times T^{1})$, i.e., $G/H=Sp(n)/Sp(n-1)\times T^{1}$ with the $T^{n}$-action, satisfies $k(G/H)=2$.
This implies that a torus manifold structure on a given homogeneous manifold $G/H$ may exist when $k(G/H)\le 3$.
However, among the spaces $G/H$, since there is no case with $4\le k(G/H)\le n-1$ by Theorem~\ref{main}, Problem~\ref{Masuda-problem} is affirmatively solved for the maximal rank compact homogeneous spaces $G/H$.
\end{remark}

\begin{remark}
\label{about_torus_mainfold2}
We also note that, by \cite{Ku09}, the homogeneous space $G/H$ can be a torus manifold only if it is isomorphic to $A_{n}/A_{n-1}\times T^{1}$ or $B_{n}/D_{n}$. 
Therefore, it is easy to check that if $k(G/H)\le 3$ and $G/H$ is a torus manifold, then its GKM graph must coincide with that of $A_{n}/A_{n-1}\times T^{1}$ or $B_{n}/D_{n}$ equipped with the non-standard connection (see \cite{GZ, Ku19} for the notion of connection).
If $k(G/H)=3$, then the connection is unique by \cite{GZ}.
Moreover, by using \cite{Ku19}, if such a GKM graph extends to that of $A_{n}/A_{n-1}\times T^{1}$ or $B_{n}/D_{n}$, then the connection must be the standard one.
Consequently, if $n\ge 4$, there exists no $G/H$ with $k(G/H)=3$ that is isomorphic to a torus manifold.
We also remark that, in the case $G/H=Sp(n)/Sp(n-1)\times T^{1}$ considered in Remark~\ref{about_torus_mainfold1}, the standard complex structure corresponds to the standard connection on its GKM graph, while the non-standard almost complex structure corresponds to the non-standard connection.
This phenomenon can occur precisely because $k(G/H)=2$.
\end{remark}

\subsection{Proof of the main theorem}
\label{sect:3.2}
Here, we prove Theorem~\ref{main}, assuming several facts that will be established in later sections.
The following key lemma significantly reduces the number of cases we need to check.

\begin{lemma}\label{auxiliary lemma}
Under the hypothesis in Theorem~\ref{main}, 
if $k(G/H)\ge 3$, then  $G/H$ is a symmetric space given in Theorem~\ref{thm:cart}.
\end{lemma}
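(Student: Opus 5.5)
We argue by contraposition through the Lie-algebraic criterion of Proposition~\ref{pr:symm}. Write $\mathfrak{g}=\mathfrak{h}\oplus\mathfrak{m}$, where $\mathfrak{h}=C(\mathfrak{g})\oplus\bigoplus_{\alpha\in\Delta_H}\mathfrak{g}_\alpha$ and $\mathfrak{m}=\bigoplus_{\alpha\in\Delta_{G,H}}\mathfrak{g}_\alpha$. The subspace $\mathfrak{m}$ is the Killing orthogonal complement of $\mathfrak{h}$, because $\mathfrak{g}_\alpha\perp\mathfrak{g}_\beta$ unless $\alpha+\beta=0$ and $\Delta_{G,H}=-\Delta_{G,H}$. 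Since $[\mathfrak{g}_\alpha,\mathfrak{g}_\beta]=\mathfrak{g}_{\alpha+\beta}$ whenever $\alpha,\beta,\alpha+\beta$ are roots, and $[\mathfrak{g}_\alpha,\mathfrak{g}_{-\alpha}]\subset C(\mathfrak{g})\subset\mathfrak{h}$, condition (1) of Proposition~\ref{pr:symm} is purely root-theoretic: $G/H$ is symmetric if and only if
\[
\alpha,\beta\in\Delta_{G,H},\ \alpha+\beta\in\Delta_G\ \Longrightarrow\ \alpha+\beta\in\Delta_H .
\]

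Suppose $G/H$ is not symmetric. Then there exist $\alpha,\beta\in\Delta_{G,H}$ with $\gamma:=\alpha+\beta\in\Delta_{G,H}$. Replacing roots by their negatives if necessary, we may assume $[\alpha],[\beta],[\gamma]\in\Delta_G^+\setminus\Delta_H^+\simeq\Delta_{G,H}/\{\pm1\}$. These classes are pairwise distinct, since $\alpha,\beta,\alpha+\beta$ are pairwise non-proportional roots: in a reduced root system the only multiples of a root are $\pm$ itself, and $\alpha\neq\pm\beta$ because $\alpha+\beta$ is a nonzero root. They satisfy the linear relation $\alpha+\beta-\gamma=0$, so they are three pairwise distinct isotropy weights at $p_0$ that are linearly dependent. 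Hence $k(p_0)\le 2$, and $k(G/H)\le 2$ by Proposition~\ref{k-independence}. This contradicts $k(G/H)\ge3$, so $G/H$ is symmetric.

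It remains to place $G/H$ in the list of Theorem~\ref{thm:cart}. Here $G$ is simple and simply connected, $H$ has maximal rank, and $G/H$ is simply connected with nonzero Euler characteristic, so $G/H$ is a maximal rank (inner type) compact symmetric space. If $\operatorname{rank}>2$, Cartan's classification as recorded in Theorem~\ref{thm:cart} gives the claim. The main obstacle is the low-rank cases $n\le2$, where $k(G/H)\ge3$ forces $3$ pairwise distinct weights in $\mathbb{Z}^n$ to be independent. For $n\le 2$ this is possible only if there are at most $2$ weights, i.e.\ $m=n$, and by \cite{Ku09} this gives $\mathbb{C}P^n$ or $S^{2n}$, which are the AIII/BDI torus-manifold cases (compatible with the convention that independence equals $n$ for torus manifolds). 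So we read the lemma as covering these, with the rank-$>2$ hypothesis of Theorem~\ref{thm:cart} applying when $m>n$.
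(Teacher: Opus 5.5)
Your argument is correct and is essentially the paper's proof: non-symmetry gives, via $[\mathfrak{m},\mathfrak{m}]\not\subset\mathfrak{h}$ and $[\mathfrak{g}_{a},\mathfrak{g}_{b}]=\mathfrak{g}_{a+b}$, roots $a,b,a+b\in\Delta_{G,H}$ that form a linearly dependent triple of distinct weights, so $k(G/H)=2$. You are more careful than the paper about signs modulo $\pm 1$ and about invoking Cartan's list, but your low-rank paragraph is unnecessary: since $m\ge n$ and the weights lie in $\mathbb{Z}^{n}$, with the convention $k=m$ for torus manifolds one always has $k(G/H)\le n$, so $k(G/H)\ge 3$ already forces $n\ge 3$, and $\mathbb{C}P^{n}$, $S^{2n}$ with $n\le 2$ simply fail the hypothesis.
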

\begin{proof}
Suppose that $G/H$ is not a symmetric space, i.e., $[\mathfrak{m},\mathfrak{m}]\not\subset \mathfrak{h}$ by Proposition~\ref{pr:symm}.
Then there exist roots $a,b\in \Delta^{+}_{G,H}$ such that $a+b$ is also in $\Delta^{+}_{G,H}$, by the property
\begin{equation}\label{eq:lbprop}
[\mathfrak{g}_{a},\mathfrak{g}_{b}]=\mathfrak{g}_{a+b},
\end{equation}
of a Lie algebra (see \cite[Ch III, Theorem 4.3 (iv)]{He}).
Therefore, $\Delta^{+}_{G,H}$ is not $3$-independent, i.e. $k(G/H)=2$.
This shows that if $k(G/H)\ge 3$, then $G/H$ is a symmetric space.
\end{proof}

Now we may prove Theorem~\ref{main}.
\begin{proof}[Proof of Theorem~\ref{main}]
Using Lemma~\ref{auxiliary lemma}, 
if $G/H$ is not a symmetric space in Theorem~\ref{thm:cart}, then $k(G/H)=2$.
When $G/H$ is a symmetric space in Theorem~\ref{thm:cart},
in Sections~\ref{sect:5}--\ref{sect:7} we find $k(G/H)$ in all cases except for $G$ of type $F_{4},E_{6},E_{7},E_{8}$ using weighted signed graphs.
In Sections~\ref{sect:8} and~\ref{sect:9}, we find $k(G/H)$ for $G$ of type $F_{4},E_{6},E_{7},E_{8}$ by another method.
This finishes the proof.
\end{proof}

As a corollary of Theorem~\ref{main}, we have the following description of GKM$_{4}$ homogeneous spaces by using the independence of tangential weights (also see the classification of the homogeneous tours manifolds in \cite{Ku10}).
This gives the partial answer to Problem~\ref{Masuda-problem}. 

\begin{corollary}
If $G/H$ is a simply connected homogeneous GKM manifold with $k(G/H)\ge 4$, i.e., GKM$_{4}$ homogeneous space, then $G/H$ is a torus manifold which is equivariantly diffeomorphic to a product of $\mathbb{C}P^{n}=SU(n+1)/S(U(n)\times U(1))$ or $S^{2n}=SO(2n+1)/SO(2n)$.
\end{corollary}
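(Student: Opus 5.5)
The plan is to reduce the statement to Theorem~\ref{main} via the splitting \eqref{splitting} and Proposition~\ref{split_computation}, and then use the classification of homogeneous torus manifolds from \cite{Ku09}.

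\emph{Reduction to simple factors.} Since $G/H$ is simply connected, we may replace $G$ by its universal cover without changing $k(G/H)$, as observed in Section~\ref{sect:2.3}. We then use \eqref{splitting} to write $G/H\cong G_{1}/H_{1}\times\cdots\times G_{r}/H_{r}$ equivariantly, with each $G_{i}$ simple and simply connected.

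\emph{Case analysis.} There are two cases.

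First, suppose $G/H$ is not a torus manifold. Proposition~\ref{split_computation} gives $k(G/H)=\min_{i}k(G_{i}/H_{i})$, so every factor satisfies $k(G_{i}/H_{i})\ge 4$. By Theorem~\ref{main}, each $k(G_{i}/H_{i})$ is $2$, $3$ or $n_{i}=\operatorname{rank}G_{i}$, and the value $n_{i}$ occurs only for $(A_{n_{i}},A_{n_{i}-1}\times T^{1})$ and $(B_{n_{i}},D_{n_{i}})$. Hence each factor is $\mathbb{C}P^{n_{i}}$ or $S^{2n_{i}}$ with the standard maximal torus action. Each factor then has $\dim G_{i}/H_{i}=2n_{i}$, so the product is a torus manifold, which contradicts the assumption of this case.

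Second, suppose $G/H$ is a torus manifold, so $m=n$. Then each factor is itself a torus manifold, because the weights at a fixed point of the product are the union of the weights of the factors, lying in complementary sublattices, and $\sum m_{i}=\sum n_{i}$ with $m_{i}\ge n_{i}$ for an almost effective action having isolated fixed points. By the classification in \cite{Ku09}, recalled in Remark~\ref{about_torus_mainfold2}, each factor is equivariantly diffeomorphic to $A_{n_{i}}/A_{n_{i}-1}\times T^{1}\cong\mathbb{C}P^{n_{i}}$ or $B_{n_{i}}/D_{n_{i}}\cong S^{2n_{i}}$.

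In both cases we reach the claimed conclusion.

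The main obstacle is the second case. There one must ensure that the $T$-action, which is the maximal torus action of $G$, really identifies each factor with the standard actions, rather than merely up to abstract diffeomorphism. One must also handle low-rank coincidences such as $C_{n}/C_{n-1}\times T^{1}\cong\mathbb{C}P^{2n-1}$ from Remark~\ref{about_torus_mainfold1}. That example has $k=2$ for its own maximal torus, so it is excluded by the hypothesis $k\ge 4$. I would first try to apply Theorem~\ref{main} factorwise, using that for torus manifolds the independence is $n_{i}$. Combined with $k\ge 4$, this forces each factor into case~(1) of Theorem~\ref{main}, and hence into the two standard models.
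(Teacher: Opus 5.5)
Your proof is correct and is essentially the paper's own argument: the paper gives no separate proof, but (as sketched in the introduction) derives the corollary by combining the splitting \eqref{splitting}, Proposition~\ref{split_computation}, the trichotomy $k\in\{2,3,n\}$ of Theorem~\ref{main}, and the classification of homogeneous torus manifolds in \cite{Ku09}, which is exactly your two-case argument. Your first case also goes through if one reads Proposition~\ref{split_computation} as a minimum over only the factors that are not themselves torus manifolds, which is the form in which it actually holds (e.g.\ $\mathbb{C}P^{2}$ times a non-torus factor with $k=3$ has $k=3$, not $2$): under that reading every non-torus factor still needs $k\ge 4$, hence is $\mathbb{C}P^{n_i}$ or $S^{2n_i}$ by Theorem~\ref{main}, so that case remains vacuous.
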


\subsection{An application to the topology of orbit spaces of homogeneous spaces}
\label{sect:3.3}
On the other hand, Ayzenberg and Masuda in \cite{AM} studied topological properties of GKM$_{k}$ manifolds and proved the following theorem concerning the vanishing of homology of their orbit spaces.

\begin{theorem}\cite[Theorem 2]{AM}\label{thm:AM}
Let $\Bbbk$ be a PID.
If $M$ is a GKM$_{k}$ manifold with $T$-action satisfying $H^{odd}(M;\Bbbk)=0$, then the orbit space $M/T$ is $(k+1)$-acyclic, i.e. $\widetilde{H}_{i}(M/T;\Bbbk)=0$ for any $i\leq k+1$.
\end{theorem}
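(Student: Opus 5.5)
The plan is to analyse $Q:=M/T$ through its filtration by orbit dimension, as Ayzenberg and Masuda do. Let $M_{i}\subset M$ be the union of $T$-orbits of dimension $\le i$ and $Q_{i}:=M_{i}/T$, so that
\[
Q_{0}=M^{T}\subset Q_{1}\subset\cdots\subset Q_{n}=Q .
\]
I would analyse the spectral sequence of this filtration, with
\[
E^{1}_{p,q}=H_{p+q}(Q_{p},Q_{p-1};\Bbbk)\Rightarrow H_{p+q}(Q;\Bbbk),
\]
and show that $E^{2}_{p,q}$, or $E^{\infty}_{p,q}$, vanishes for $0<p+q\le k+1$.

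\textbf{Step 1: local structure near fixed points.} Near a fixed point, $M$ is equivariantly $\bigoplus_{i}V_{\alpha_{p,i}}$. If the weights are $k$-independent, a point with support in fewer than $k$ weight lines has stabilizer determined exactly by those coordinates. Hence for $j<k$ the $j$-dimensional orbits near $p$ are exactly those in coordinate subspaces $V_{\alpha_{i_{1}}}\oplus\cdots\oplus V_{\alpha_{i_{j}}}$ with all coordinates nonzero. Their quotients are open cones $\R_{\ge0}^{j}$. So $Q_{k-1}$ is locally the $(k-1)$-skeleton of a simple polyhedral structure, like the orbit space of a torus manifold. Globally, the closure of each component of $Q_{j}\setminus Q_{j-1}$ for $j<k$ is a "face" that I expect to be acyclic: it is the orbit space of a $j$-independent torus manifold-like invariant submanifold, and one can use induction on $j$.

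\textbf{Step 2: equivariant formality and the Atiyah--Bredon sequence.} Since $H^{odd}(M;\Bbbk)=0$, $M$ is equivariantly formal over $\Bbbk$, with a PID, where the GKM hypothesis controls torsion in the isotropy weights. By the Franz--Puppe theorem, the Atiyah--Bredon sequence
\[
0\to H^{*}_{T}(M)\to H^{*}_{T}(M_{0})\to H^{*+1}_{T}(M_{1},M_{0})\to\cdots
\]
is exact. Translating to the orbit space, each relative term $H^{*}_{T}(M_{j},M_{j-1})$ is computed from the relative homology of $(Q_{j},Q_{j-1})$ tensored with equivariant data of the $j$-dimensional orbits. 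This is the same input that gives the $E^{1}$-page above, with $E^{1}_{p,*}$ concentrated in $q=0$ in the range $p<k$ by Step~1. Exactness of the Atiyah--Bredon sequence in degrees up to $k$ then forces the $E^{2}$-page in the bottom row to vanish for $0<p\le k$. The only remaining contributions in total degree $\le k+1$ come from $p\ge k$. Controlling these needs only connectivity of the pairs $(Q_{p},Q_{p-1})$, which follows from the cone structure: the link at a point has dimension $\ge$ its codimension.

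\textbf{Main obstacle.} I expect the hardest step to be the passage from exactness of the equivariant Atiyah--Bredon complex to vanishing of ordinary homology of $Q$ in degrees up to $k+1$. The difficulty is the top two degrees $k$ and $k+1$, where orbits of dimension $\ge k$ with non-coordinate stabilizers first appear and Step~1 no longer applies. I would first try to handle these by a local computation: for a representation with $k$-independent weights, the orbit space of the unit sphere, $S^{2m-1}/T$, should itself be $(k-1)$-acyclic (by induction on $m$ via joins). I would then glue these local pieces by Mayer--Vietoris over neighbourhoods of the fixed points, which gives the two extra degrees.
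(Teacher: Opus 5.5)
The paper does not prove this statement (it is quoted from \cite{AM}), so your outline has to stand on its own, and it does not: it only works in the range where the faces are homology cells. Step~1 is fine, granting Masuda--Panov for the torus manifolds that are components of $M^{T'}$ with $\operatorname{codim}T'<k$. Your bottom-row claim is also essentially right. Rationally $H^{*}_{T}(M_{p},M_{p-1})\cong\bigoplus_{F}H^{*}(F,\partial F)\otimes H^{*}(BT_{F})$, and the image of $\pi^{*}\colon H^{*}(Q_{p},Q_{p-1})\to H^{*}_{T}(M_{p},M_{p-1})$ is a subcomplex of the Atiyah--Bredon complex. Exactness of the degree-zero strand then forces $E_{2}^{p,0}=0$ for $0<p\le k$. (The phrase ``the GKM hypothesis controls torsion in the isotropy weights'' is not a valid reason for integral exactness: GKM allows weights like $2e_{i}$, or pairs $e_{a}\pm e_{b}$ with disconnected generic stabilizer.)

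The gap is every other term of total degree $\le k+1$. These are the groups $H_{i}(Q_{p},Q_{p-1})$ with $p\ge k$ and $i<p$, which already reach degrees below $k$, together with $E^{1}_{k,1}$ and $E^{1}_{k+1,0}$. Your claim that $(Q_{p},Q_{p-1})$ is $(p-1)$-connected ``by the cone structure'' does not follow. For $p\ge k$, more than $p$ weights at a fixed point may lie in a $p$-dimensional subspace, so the stratum $Q_{p}\setminus Q_{p-1}$ can have dimension $>p$. Then $H_{*}(Q_{p},Q_{p-1})$ is its Borel--Moore homology, a global invariant that no link-dimension count controls.

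The same objection defeats your plan for degrees $k$ and $k+1$. Neighbourhoods of fixed points do not cover $Q$, and each is a cone on $S^{2m-1}/T$, hence contractible; nothing purely local can work. To see this, take an equivariant connected sum $M'=M\#_{T}(T\times N)$ along a free orbit, with $\dim N=2m-n$. All fixed points, weights and local cones are unchanged, but $Q'=M'/T=Q\#N$. With $N=S^{i}\times S^{2m-n-i}$, $0<i<2m-n-1$, one gets $\widetilde{H}_{i}(Q')\neq0$, and for $i=1<n$ also $H_{1}(Q',Q'_{n-1})\neq0$.

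So these vanishings must come from $H^{odd}(M;\Bbbk)=0$ itself, i.e.\ from the parts of the Atiyah--Bredon--Franz--Puppe sequence you set aside: the summands $H^{a}(F,\partial F)\otimes H^{2b}(BT_{F})$, $b\ge1$, for faces of orbit-dimension $\ge k$. Since $Q$ contributes only a subcomplex, exactness of the whole sequence says nothing about it at positions $\ge k+1$ or in the rows $q\neq0$. You would have to show that in total degree $\le k+1$ these $b\ge1$ summands neither bound nor absorb classes coming from $Q$. That needs information on the faces of orbit-dimension $\ge k$, whose weights are $k$-independent but no longer linearly independent, and the proposal supplies none. As written it proves the bottom-row vanishing, not $(k+1)$-acyclicity.
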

In order to apply Theorem~\ref{thm:AM} for $3$-independent cases (i.e., GKM$_{3}$ cases) in Theorem~\ref{main},
in what follows we need to know $H^{*}(G/H,\Bbbk)=0$ for particular $\Bbbk$ coefficients.
If $\Bbbk$ is a field of $\chr \Bbbk=0$, this always holds~\cite[Prop. 30.2]{B53}.
More precisely, combining with the known results for cohomology, we have the following corollary:

\begin{corollary}\label{cor:acyc}
If $G/H$ is a symmetric GKM$_{3}$ space (see Theorem~\ref{thm:cart} and (1), (2) of Theorem~\ref{main})
with the following coefficient $\Bbbk$ for the reduced homology, then $\widetilde{H}_{i}(T\backslash G/H;\Bbbk)=0$ for $i\le 4$.

If $\Bbbk=\mathbb{Z}$, the the list is as follows:
\begin{center}
\begin{tabular}{|l|l|l|l|l|} \hline
label & pair & $G$ & $H$ &  \\ \hline
   {\rm AIII} & $(A_{n},A_{i}\times A_{n-1-i}\times T^{1})$ & $SU(n+1)$ & $S(U(i+1)\times U(n-i))$ & $0\le i\le n-1$ \\
   {\rm BDI}    & $(D_{n}, D_{n-1}\times T^{1})$ & $SO(2n)$ & $SO(2n-2)\times SO(2)$ &    \\
    {\rm DIII} & $(D_{n}, A_{n-1}\times T^{1})$ & $SO(2n)$ & $U(n)$ & \\
    {\rm CII} & $(C_{n},C_{i}\times C_{n-i})$ & $Sp(n)$ & $Sp(i)\times Sp(n-i)$ & $1\le i\le n-1$ \\ 
    {\rm EIII} & $(E_{6}, D_{5}\times T^{1})$ & $E_{6}$ & $(Spin(10)\times T^{1})/\mathbb{Z}_{4}$ & \\
    {\rm EVII} & $(E_{7},E_{6}\times T^{1})$ & $E_{7}$ & $(E_{6}\times T^{1})/\mathbb{Z}_{3}$ & \\
   {\rm FII} & $(F_{4}, B_{4})$ & $F_{4}$ & $Spin(9)$ & \\
   \hline
 \end{tabular}
\end{center}

If $\Bbbk=\mathbb{Z}[\frac{1}{2}]$, the the list is as follows: 
\begin{center}
\begin{tabular}{|l|l|l|l|l|} 
\hline
label & pair  & $G$  & $H$  &  \\ \hline
  {\rm BDI}  & $(D_{n}, D_{i}\times D_{n-i})$ & $SO(2n)$ & $SO(2i)\times SO(2n-2i)$ & $i>2$,   $n-i>2$  \\
    {\rm EII} & $(E_{6}, A_{5}\times A_{1})$ & $E_{6}$ & $(SU(6)\times SU(2))/\mathbb{Z}_{2}$ & \\
      {\rm EV} & $(E_{7},A_{7})$ & $E_{7}$ & $SU(8)/\mathbb{Z}_{2}$ & \\
      {\rm EVI} & $(E_{7},D_{6}\times A_{1})$ & $E_{7}$ & $(Spin(12)\times SU(2))/\mathbb{Z}_{2}$ & \\
    {\rm EIX} & $(E_{8},E_{7}\times A_{1})$ & $E_{8}$ & $(E_{7}\times SU(2))/\mathbb{Z}_{2}$ & \\
   \hline
 \end{tabular}
 \end{center}
\end{corollary}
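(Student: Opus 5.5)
The corollary follows by combining Theorem~\ref{main} with Theorem~\ref{thm:AM}, applied with $k=3$. The plan has three steps, and only the second one is real work.

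\textbf{Step 1: the GKM$_3$ property.} By Theorem~\ref{main}(1),(2), every pair listed in the two tables satisfies $k(G/H)\ge 3$. For $(A_n,A_{n-1}\times T^1)$ we have $k=n\ge3$, and every other listed pair has $k=3$. Note that the tables omit the pairs excluded in Theorem~\ref{main}(2), namely $B_n/D_i\times B_{n-i}$, $C_n/A_{n-1}\times T^1$ and $F_4/C_3\times A_1$. Hence each listed $G/H$ is a GKM$_3$ manifold, and the $T$-action is almost effective because $T$ is a maximal torus and $G$ is simple.

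\textbf{Step 2: vanishing of odd cohomology with the stated coefficients.} Theorem~\ref{thm:AM} requires $H^{odd}(G/H;\Bbbk)=0$. I would obtain this from known torsion computations for these symmetric spaces, case by case.

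For $\Bbbk=\mathbb{Z}$ in the first table:
\begin{itemize}
\item AIII, DIII, EIII and EVII are generalized flag manifolds $G/C(S)$, centralizers of tori. They carry Bruhat (Schubert) cell decompositions with only even-dimensional cells, so their integral cohomology is free and concentrated in even degrees.
\item $D_n/D_{n-1}\times T^1$ is the complex quadric, which is covered by the same argument.
\item CII, $Sp(n)/Sp(i)\times Sp(n-i)$, is the quaternionic Grassmannian. It has an even cell decomposition, with cells in degrees divisible by $4$.
\item FII, $F_4/Spin(9)$, is the Cayley plane $\mathbb{O}P^2$, with cells in dimensions $0,8,16$.
\end{itemize}

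For the second table, one uses the results of Borel (and later Toda and Ishitoya for the exceptional spaces). Together with the Bott--Samelson theorem, these show that the only torsion in $H^*(G/H;\mathbb{Z})$ is $2$-torsion:
\begin{itemize}
\item for the real Grassmannians $SO(2n)/SO(2i)\times SO(2n-2i)$;
\item for EII, EV, EVI and EIX.
\end{itemize}
After inverting $2$, the rational computation (equal rank gives $H^{odd}(G/H;\mathbb{Q})=0$, \cite[Prop.~30.2]{B53}) together with the universal coefficient theorem gives $H^{odd}(G/H;\mathbb{Z}[\tfrac12])=0$.

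A general tool would help here: for an equal-rank pair, $H^*(G/H;\Bbbk)$ is free and even whenever $|W_G/W_H|=\chi(G/H)$ equals the total rank of $H^*(G/H;\mathbb{F}_p)$. That equality holds whenever $p$ is not a torsion prime of $G$ or $H$. With this, I would check the torsion primes of $H$ in each case and see that only $p=2$ appears in the second table:
\begin{itemize}
\item $Spin(m)$ for $m\ge7$ has torsion prime $2$;
\item $SU(8)/\mathbb{Z}_2$ and $Ss(16)$ have torsion prime $2$;
\item $E_7$, a factor of $H$ in EIX, has torsion primes $2$ and $3$.
\end{itemize}

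\textbf{Main obstacle: EIX.} Here the factor $E_7$ has the torsion prime $3$, so $3$-torsion could a priori enter $H^*(E_8/E_7\times A_1)$. I would rely on the known computation (Ishitoya) that this space has only $2$-torsion. Alternatively, I would argue via the Serre spectral sequence of $E_8/T\to E_8/H$ with fiber $H/T$, whose integral cohomology is even and free. The base is also torsion-free after localization at $3$, since $E_8$'s $3$-torsion matches that of $E_7$ in the relevant range.

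\textbf{Step 3: conclusion.} With Steps 1 and 2 in hand, Theorem~\ref{thm:AM} with $k=3$ gives $\widetilde{H}_i(T\backslash G/H;\Bbbk)=0$ for $i\le 4$.
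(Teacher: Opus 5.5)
Your argument has the same skeleton as the paper's: verify $H^{odd}(G/H;\Bbbk)=0$ case by case, then apply Theorem~\ref{thm:AM} with $k=3$. The difference lies in how Step 2 is justified.

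The paper relies on citations. It uses Bott's even cell decomposition for the Hermitian cases and the known cohomology of $F_4/Spin(9)$. It uses the torsion computation for oriented Grassmannians from [MW], and [IT, Prop.~3.3] for EII, EV, EVI, EIX. The paper's proof never treats CII, so your remark that the quaternionic Grassmannian has cells only in dimensions divisible by $4$ fills a small hole in the written argument. Your torsion-prime criterion is a more uniform substitute for the citations. The mention of $Ss(16)$ is superfluous, since EVIII is not in the list.

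One correction is needed. As you state the criterion, $p$ must avoid the torsion primes of $G$ as well as those of $H$. In that form it says nothing about EII, EV, EVI, EIX at $p=3$, because $3$ is a torsion prime of $E_6,E_7,E_8$ (and $5$ is one of $E_8$). What you actually use when you check only $H$ is the sharper statement that the torsion primes of $G$ are irrelevant. That statement is true, by the following argument.
\begin{enumerate}
\item If $p$ is not a torsion prime of $H$, then $H^*(BT;\mathbb{F}_p)\to H^*(H/T;\mathbb{F}_p)$ is onto.
\item This map factors through $H^*(G/T;\mathbb{F}_p)$.
\item Leray--Hirsch therefore collapses the Serre spectral sequence of $H/T\to G/T\to G/H$.
\item Hence the total $\mathbb{F}_p$-Betti number of $G/H$ equals $|W_G|/|W_H|$, and there is no $p$-torsion.
\end{enumerate}

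For EIX at $p=3$ this criterion does not apply, and your ``alternative'' argument only restates the desired conclusion. The missing input is that $H^*(E_8/T;\mathbb{F}_3)\to H^*(H/T;\mathbb{F}_3)$ is still onto. The only fiber generator not coming from $H^2$ is the degree-$8$ class of $E_7/T$. The degree-$8$ class of $E_8/T$ hits it modulo products of degree-$2$ classes. This holds because both classes map to $x_8=\beta P^1x_3$, and $H^3(E_8;\mathbb{F}_3)\to H^3(E_7;\mathbb{F}_3)$ is an isomorphism. The same Leray--Hirsch argument then excludes $3$-torsion. Without this, you must rely on the literature for EIX, exactly as the paper does.
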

\begin{proof}
To prove the statement, we only check when the respective results for $3$-independent cases (see Theorem~\ref{main}) has the property $H^{*}(G/H,\Bbbk)=0$ for the corresponding coefficient rings $\Bbbk$.

If $m_{i}=1$ holds (this corresponds to Theorem~\ref{thm:BdS} $(2)$, i.e., there is a $T^{1}$-factor in $H$), 
the homogeneous space $G/H$ is a Hermitian symmetric space.
It admits an even-dimensional Schubert cell decomposition by existence of a Bott-Morse function~\cite[Theorem A]{B56}; therefore, $H^{odd}(G/H;\mathbb{Z})=0$.
In this case, 
the GKM$_{3}$ spaces appeared in Theorem~\ref{main} are as follows~\cite[Corollary 8.11.7, p.293]{W11}:
\begin{align*}
(A_{n},A_{i}\times A_{n-1-i}\times T^{1}),\ 
(D_{n}, A_{n-1}\times T^{1}),\ 
(D_{n}, D_{n-1}\times T^{1}),\ 
(E_{6}, D_{5}\times T^{1}),\ 
(E_{7},E_{6}\times T^{1}).
\end{align*}
Moreover, it is well-known that the integral cohomology ring for Cayley projective space $F_{4}/Spin(9)$, i.e., $(F_{4},B_{4})$, is generated by the degree $8$ element and also satisfies $H^{odd}(G/H;\mathbb{Z})=0$.

We next claim that $H^{odd}(G/H;\mathbb{Z})$ is torsion if ${\rm rank}\ G={\rm rank}\ H$.
Indeed, suppose that
\begin{align*}
H^{2i-1}(G/H;\mathbb{Z})\simeq \mathbb{Z}^{m}\oplus F,
\end{align*}
where $\mathbb{Z}^{m}$ is a free part and $F$ is the torsion subgroup.
Then, tensoring $\mathbb{Q}$ gives 
\begin{align*}
H^{2i-1}(G/H;\mathbb{Z})\otimes \mathbb{Q}\simeq H^{2i-1}(G/H;\mathbb{Q})\simeq \mathbb{Q}^{m}. 
\end{align*}
Since $H^{odd}(G/H;\mathbb{Q})=0$, we must have $m=0$. 
Hence, $H^{2i-1}(G/H;\mathbb{Z})$ is torsion.

Using this claim, we shall prove the statement for the other cases.
For the other cases, we refer the known facts from the other literatures.
By the computations of torsions for the oriented partial flag varieties in \cite[Theorem 1.2]{MW}, the integer cohomology of the oriented Grassmann (includes $(D_{n}, D_{i}\times D_{n-i})$ for $i>2,n-i>2$) has only $2$ or $4$ torsions. 
Therefore, by the above claim, $H^{odd}(G/H;\mathbb{Z}[\frac{1}{2}])=0$ for $G/H=SO(2n)/(SO(2i)\times SO(2n-2i))$.
For the exceptional cases EII, EV, EVI, EIX, it satisfies $\pi_{1}(H)=\mathbb{Z}_{2}$. Therefore, by using \cite[Proposition 3.3]{IT}, we have that $H^{odd}(G/H;\mathbb{Z}[\frac{1}{2}])=0$.
This establishes the lists when $\Bbbk=\mathbb{Z}[\frac{1}{2}]$.
\end{proof}

\begin{remark}
The integer cohomology ring for EII is determined by generators and relations (see Ishitoya \cite{Ish}). 
Some of them for BDI are also determined (e.g. \cite{Jo}).
However, the integer cohomology rings for all cases of BDI and the cases EV, EVI, EIX are not determined (also see \cite{Na}).
\end{remark}

\begin{example}\label{ex:origra}
Consider the homogeneous manifold
\[
Gr^{+}(2,n)=SO(n)/SO(n-2)\times SO(2)=Q_{n},
\]
with the natural effective $T$-action.
Here, if $n=2r$ then $T=T^{r}/{\pm 1}$, and if $n=2r+1$ then $T=T^{r}$.
This oriented Grassmanian is a connected double cover of the real Grassmanian $Gr(2,n)$.
This implies $Gr^{+}(2,3)=S^3$.
It is not difficult to show that $Gr^+(2,4)$ is homeomorphic to $S^2\times S^2$.
(Furthermore, $Gr^+(2,6)$ is homeomorphic to the complex Grassmanian $Gr_{\C}(2,4)$.)
The orbit space is homeomorphic to $\Sigma^{\lfloor n/2\rfloor} \C P^{\lfloor n/2\rfloor-2}$ by~\cite{S19}.
In particular, the orbit space is ${\lfloor n/2\rfloor}$-connected (where the suspension of an empty set is by definition a point).
In particular, it is contractible for $n=3,4$.
This homogeneous manifold has type $D_{r}/D_{r-1}\times T^{1}$ for $n=2r$ and $B_{r}/B_{r-1}\times T^1$ for $n=2r+1$, where $r\geq 2$.
Here, $B_1=A_1$, $D_1=T^{1}$, and $D_2=(A_1)^2$.
Since $Gr^{+}(2,2r)=Q_{2r}$ and $Gr^{+}(2,2r+1)=Q_{2r+1}$, 
It is known that $H^{odd}(Q_{n};\mathbb{Z})=0$.
Therefore, Theorem~\ref{thm:AM} implies that the orbit space is $4$-acyclic for $n=2r$ and $3$-acyclic for $n=2r+1$.
Therefore, our result agrees with previous known results, in particular, in dimensions $n=6,7,8$.
\end{example}

\begin{example}\label{ex:comgra}
Consider the homogeneous manifold
\[
Gr(2,5)=SU(5)/S(U(2)\times U(3)),
\]
with the natural effective $T^{4}$-action.
This is the complex Grassmanian of two-planes in $\C^5$.
The fixed-point data at the identity is a root system of type $A_{4}/A_{1}\times A_{2}\times T^{1}$.
By Theorems~\ref{main} and Corollary~\ref{cor:acyc},
this GKM-manifold has $4$-acyclic orbit space $Gr(2,5)/T^4$.
On the other hand, by~\cite{BT} or~\cite{S21},
\[
\widetilde{H}_{*}(Gr(2,5)/T^4;\Z)\simeq
\begin{cases}
\Z/2\Z,\ n=5,\\
\Z,\ n=8,\\
0,\ \mbox{else}.
\end{cases}
\]
This agrees with the above acyclicity claim.
\end{example}


\begin{remark}
The second named author finds the negative answer to Problem~\ref{Masuda-problem} for the abstract GKM graphs which are the combinatorial counterpart of GKM manifolds in~\cite{So}. 
Furthermore, this problem is positively solved for GKM graphs of GKM manifolds that are either Hamiltonian or have complexity $1$ by~\cite{GS}.
However, the geometric problem is still open. 
\end{remark}

\section{The weighted signed graph associated from positive root systems}
\label{sect:4}

In this section, we first recall the signed graph, introduced by Harary~\cite{Ha} and have been extensively studied by Zaslavsky from a matroid theoretic viewpoint 
(see e.g.~\cite{Za} from one of his series of papers)
and also studied by Fern-Gordon-Leasure-Pronchik or Fried-Gerek-Gordon-Perun\v{c}i\'{c} for an automorphism of matroid induced from the root systems of classical types and $F_{4}$, see~\cite{FGLP, FGGP}.

Let $\Gamma:=(V,E)$ be a graph (with loops and multiple edges allowed).
A {\it signed graph} is a graph $\Gamma$ in which each edge $e\in E$ is labeled with a $+$ or $-$ sign, say $s(e)$.
In this paper, we mainly consider the following three types of signed graphs (see Figure~\ref{examples_signed_graphs}):

\begin{description}
\item[Slim] the complete graph of $n$ vertices such that all edges have the same sign, say $K_{n}$;
\item[Puff] the complete graph of $n$ vertices 
such that every two vertices are connected by exactly two edges (i.e., a multiple-edge) whose signs are $+$ and $-$ respectively, say $\pm K_{n}$;
\item[Superpuff] the signed graph $\pm K_{n}$ with $n$ loops on each vertex, say $K_{n}^{\circ}$.
\end{description}

\begin{figure}[H]
\begin{tikzpicture}
\begin{scope}[xscale=0.5, yscale=0.5]
\fill(-5,-2)circle (5pt);
\node[left] at (-5,-2) {$1$};
\fill(-1,-2)circle (5pt);
\node[right] at (-1,-2) {$2$};
\fill(-1,2)circle (5pt);
\node[right] at (-1,2) {$3$};
\fill(-5,2)circle (5pt);
\node[left] at (-5,2) {$4$};

\draw[thick] (-5,-2)--(-5,2);
\draw[thick] (-5,-2)--(-1,-2);
\draw[thick] (-5,-2)--(-1,2);
\draw[thick] (-5,2)--(-1,-2);
\draw[thick] (-5,2)--(-1,2);
\draw[thick] (-1,2)--(-1,-2);

\fill(1,-2)circle (5pt);
\node[left] at (1,-2) {$1$};
\fill(5,-2)circle (5pt);
\node[right] at (5,-2) {$2$};
\fill(5,2)circle (5pt);
\node[right] at (5,2) {$3$};
\fill(1,2)circle (5pt);
\node[left] at (1,2) {$4$};

\draw[dashed](1,-2)to[out=15,in=165](5,-2);
\draw[thick](1,-2)to[out=-15,in=-165](5,-2);
\draw[thick](1,2)to[out=15,in=165](5,2);
\draw[dashed](1,2)to[out=-15,in=-165](5,2);
\draw[thick](5,-2)to[out=75,in=-75](5,2);
\draw[dashed](5,-2)to[out=105,in=-105](5,2);
\draw[thick](1,-2)to[out=105,in=-105](1,2);
\draw[dashed](1,-2)to[out=75,in=-75](1,2);
\draw[thick](1,-2)to[out=60,in=-150](5,2);
\draw[dashed](1,-2)to[out=30,in=-105](5,2);
\draw[thick](1,2)to[out=-30,in=120](5,-2);
\draw[dashed](1,2)to[out=-60,in=150](5,-2);

\node[label={200:$1$}] at (8,-2) {};
\fill(12,-2)circle (5pt);
\node[label={290:$2$}] at (12,-2) {};
\fill(12,2)circle (5pt);
\node[label={20:$3$}] at (12,2) {};
\fill(8,2)circle (5pt);
\node[label={160:$4$}] at (8,2) {};

\draw[dashed](8,-2)to[out=15,in=165](12,-2);
\draw[thick](8,-2)to[out=-15,in=-165](12,-2);
\draw[thick](8,2)to[out=15,in=165](12,2);
\draw[dashed](8,2)to[out=-15,in=-165](12,2);
\draw[thick](12,-2)to[out=75,in=-75](12,2);
\draw[dashed](12,-2)to[out=105,in=-105](12,2);
\draw[thick](8,-2)to[out=105,in=-105](8,2);
\draw[dashed](8,-2)to[out=75,in=-75](8,2);
\draw[thick](8,-2)to[out=60,in=-150](12,2);
\draw[dashed](8,-2)to[out=30,in=-105](12,2);
\draw[thick](8,2)to[out=-30,in=120](12,-2);
\draw[dashed](8,2)to[out=-60,in=150](12,-2);

\draw[thick](7.5,-2.5) circle [x radius=0.7cm, y radius=0.7cm, rotate=30];
\draw[thick](7.5,2.5) circle [x radius=0.7cm, y radius=0.7cm, rotate=30];
\draw[thick](12.5,-2.5) circle [x radius=0.7cm, y radius=0.7cm, rotate=30];
\draw[thick](12.5,2.5) circle [x radius=0.7cm, y radius=0.7cm, rotate=30];

\end{scope}
\end{tikzpicture}
\caption{The signed graph $K_{4}$, $\pm K_{4}$ and $K_{4}^{\circ}$ (from left). Edges with $s(e)=-$ (resp. $s(e)=+$) are drawn as solid (resp. dashed) edges.}
\label{examples_signed_graphs}
\end{figure}
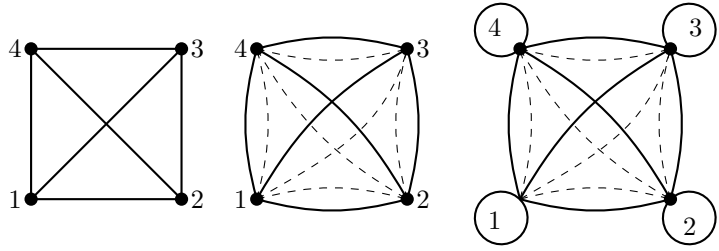

Moreover, in this paper, we give a weight on each vertex by a vector in $\mathbb{R}^{n}$, say $w_{V}:V\to \mathbb{R}^{n}$ (see Figure~\ref{example_ABDG}).
In this situation, we may also define weights on edges and loops, say $w_{E}:E\to \mathbb{R}^{n}$, by 
\begin{itemize}
\item if $l\in E$ is a loop on the vertex $v\in V$, then $w_{E}(l):=w_{V}(v)$;
\item if $e\in E$ is an edge connecting two vertices $p,q\in V$, then for some non-zero $r^{+}_{pq}, r^{-}_{pq}\in \mathbb{R}$, $w_{E}(e):=r^{+}_{pq}(w_{V}(p)-w_{V}(q))$ if $s(e)=+$ and $w_{E}(e):=r^{-}_{pq}(w_{V}(p)+w_{V}(q))$ if $s(e)=-$. 
\end{itemize}
We call such a signed graph equipped with a weight $w_{V}, w_{E}$ a {\it weighted signed graph}.

The positive roots of certain compact Lie groups can be regarded as a weighted signed graph as follows, where $e_{i}\in \mathbb{R}^{N}$ is the standard basis for $i=1,\ldots, N$ and the edge $e=ij\in E$ connecting vertices $i$, $j\in V$.
\begin{itemize}
\item for $A_{n}$ ($n\ge 1$), the weighted signed graph is $K_{n+1}$ with the weights $w_{V}(i):=e_{i}$ and $r^{+}_{ij}=-r^{+}_{ji}=1$, where $i,j\in V=[n+1]$ and $1\le i<j\le n+1$;
\item for $B_{n}$ ($n\ge 2$), the weighted signed graph is $K_{n}^{\circ}$ with the weights $w_{V}(i):=e_{i}$ and $r_{ij}^{+}=-r^{+}_{ji}=r_{ij}^{-}=r_{ji}^{-}=1$, where $i, j\in V=[n]$ and $1\le i<j\le n$;
\item for $C_{n}$($n\ge 2$), the weighted signed graph is $K_{n}^{\circ}$ with the weights $w_{V}(i):=2e_{i}$ and $r_{ij}^{+}=-r^{+}_{ji}=r_{ij}^{-}=r_{ji}^{-}=\frac{1}{2}$, where $i\in V=[n]$ and $1\le i<j\le n$;
\item for $D_{n}$ ($n\ge 4$), the weighted signed graph is $\pm K_{n}$ with the weights $w_{V}(i):=e_{i}$ and $r_{ij}^{+}=-r^{+}_{ji}=r_{ij}^{-}=r_{ji}^{-}=1$, where $i\in V=[n]$;
\end{itemize}

\begin{figure}[H]
\centering
\begin{subfigure}{0.2\textwidth}
\begin{tikzpicture}
\begin{scope}[xscale=0.3, yscale=0.3]
\coordinate (A) at (4,0);
\coordinate (B) at ({4*cos(2*pi/3 r)},{4*sin(2*pi/3 r)});
\coordinate (C) at ({4*cos(4*pi/3 r)},{4*sin(4*pi/3 r)});
\fill(A) circle (5pt);
\node[above] at (A) {$e_{1}$};
\fill(B)circle (5pt);
\node[left] at (B) {$e_{2}$};
\fill(C)circle (5pt);
\node[left] at (C) {$e_{3}$};
\draw (A)--(B)--(C)--cycle;
\end{scope}
\end{tikzpicture}
\caption*{$A_{2}$}
\label{example_weighted_vertices_A}
\end{subfigure}\hspace{1em}
\begin{subfigure}{0.2\textwidth}
\begin{tikzpicture}
\begin{scope}[xscale=0.3, yscale=0.3]

\coordinate (A) at (4,0);
\coordinate (B) at ({4*cos(2*pi/3 r)},{4*sin(2*pi/3 r)});
\coordinate (C) at ({4*cos(4*pi/3 r)},{4*sin(4*pi/3 r)});

\fill(A) circle (5pt);
\fill(B)circle (5pt);
\fill(C)circle (5pt);

\draw[dashed](A)to[out=115,in=0](B);
\draw[thick](A)to[out=170,in=-55](B);
\draw[thick](A)to[out=190,in=55](C);
\draw[dashed](A)to[out=-115,in=0](C);
\draw[thick](B)to[out=295,in=65](C);
\draw[dashed](B)to[out=245,in=115](C);

\draw[thick](4.7,0) circle [x radius=0.7cm, y radius=0.7cm];
\node[above] at (4.7,0.7) {$e_{1}$};
\draw[thick](-2.5,4) circle [x radius=0.7cm, y radius=0.7cm];
\node[left] at (-3.2,4) {$e_{2}$};
\draw[thick](-2.5,-4) circle [x radius=0.7cm, y radius=0.7cm];
\node[left] at (-3.2,-4) {$e_{3}$};
\end{scope}
\end{tikzpicture}
\caption*{$B_{3}$}
\label{example_weighted_vertices_B}
\end{subfigure}\hspace{1em}
\begin{subfigure}{0.2\textwidth}
\begin{tikzpicture}
\begin{scope}[xscale=0.5, yscale=0.5]

\node[label={200:$1$}] at (8,-2) {};
\fill(12,-2)circle (5pt);
\node[label={290:$2$}] at (12,-2) {};
\fill(12,2)circle (5pt);
\node[label={20:$3$}] at (12,2) {};
\fill(8,2)circle (5pt);
\node[label={160:$4$}] at (8,2) {};

\draw[dashed](8,-2)to[out=15,in=165](12,-2);
\draw[thick](8,-2)to[out=-15,in=-165](12,-2);
\draw[thick](8,2)to[out=15,in=165](12,2);
\draw[dashed](8,2)to[out=-15,in=-165](12,2);
\draw[thick](12,-2)to[out=75,in=-75](12,2);
\draw[dashed](12,-2)to[out=105,in=-105](12,2);
\draw[thick](8,-2)to[out=105,in=-105](8,2);
\draw[dashed](8,-2)to[out=75,in=-75](8,2);
\node[right] at (12,0) {$a_{23}^{-}$};
\node[left] at (12,0) {$a_{23}^{+}$};
\draw[thick](8,-2)to[out=60,in=-150](12,2);
\draw[dashed](8,-2)to[out=30,in=-105](12,2);
\draw[thick](8,2)to[out=-30,in=120](12,-2);
\draw[dashed](8,2)to[out=-60,in=150](12,-2);

\draw[thick](7.5,-2.5) circle [x radius=0.7cm, y radius=0.7cm, rotate=30];
\node[label={300:$2e_{1}$}] at (8,-2) {};
\draw[thick](7.5,2.5) circle [x radius=0.7cm, y radius=0.7cm, rotate=30];
\node[label={70:$2e_{4}$}] at (8,2) {};
\draw[thick](12.5,-2.5) circle [x radius=0.7cm, y radius=0.7cm, rotate=30];
\node[label={250:$2e_{2}$}] at (12,-2) {};
\draw[thick](12.5,2.5) circle [x radius=0.7cm, y radius=0.7cm, rotate=30];
\node[label={110:$2e_{3}$}] at (12,2) {};

\end{scope}
\end{tikzpicture}
\caption*{$C_{4}$}
\label{example_weighted_vertices_C}
\end{subfigure}\hspace{1em}
\begin{subfigure}{0.2\textwidth}
\begin{tikzpicture}
\begin{scope}[xscale=0.5, yscale=0.5]

\fill(8,-2)circle (5pt);
\node[left] at (8,-2) {$e_{1}$};
\fill(12,-2)circle (5pt);
\node[right] at (12,-2) {$e_{2}$};
\fill(8,2)circle (5pt);
\node[left] at (8,2) {$e_{4}$};
\fill(12,2)circle (5pt);
\node[right] at (12,2) {$e_{3}$};

\draw[dashed](8,-2)to[out=15,in=165](12,-2);
\draw[thick](8,-2)to[out=-15,in=-165](12,-2);
\draw[thick](8,2)to[out=15,in=165](12,2);
\draw[dashed](8,2)to[out=-15,in=-165](12,2);
\draw[thick](12,-2)to[out=75,in=-75](12,2);
\draw[dashed](12,-2)to[out=105,in=-105](12,2);
\draw[thick](8,-2)to[out=105,in=-105](8,2);
\draw[dashed](8,-2)to[out=75,in=-75](8,2);
\draw[thick](8,-2)to[out=60,in=-150](12,2);
\draw[dashed](8,-2)to[out=30,in=-105](12,2);
\draw[thick](8,2)to[out=-30,in=120](12,-2);
\draw[dashed](8,2)to[out=-60,in=150](12,-2);

\end{scope}
\end{tikzpicture}
\caption*{$D_{4}$}
\label{example_weighted_vertices_D}
\end{subfigure}\hspace{1em}
\caption{The weighted signed graphs corresponding to several types.
For the weighted signed graph $C_{4}$, the weights on the vertex can be known from the weights on loops, and edges are 
$a_{ij}^{-}=e_{i}-e_{j}$ and $a_{ij}^{+}=e_{i}+e_{j}$ for $1\le i<j\le 4$.}
\label{example_ABDG}
\end{figure}
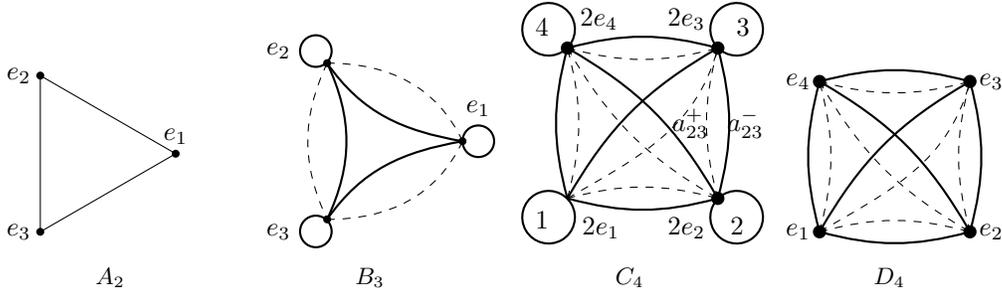

By definition of the weighted signed graph and the well-known positive root systems (see \cite[Chapter V, 6.26]{MiTo}), it is easy to check the following lemma:
\begin{lemma}
\label{correspondence}
For each root system of types $A$--$D$, 
 the weights assigned to the edges or loops of the weighted signed graph correspond bijectively to certain positive roots of the respective root system.
\end{lemma}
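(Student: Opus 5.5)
We verify the statement type by type, comparing the edge and loop weights of each weighted signed graph with the standard lists of positive roots in \cite[Chapter V, 6.26]{MiTo}:
\begin{align*}
\Delta^{+}_{A_{n}}&=\{e_{i}-e_{j}\mid 1\le i<j\le n+1\},\\
\Delta^{+}_{B_{n}}&=\{e_{i}\pm e_{j}\mid i<j\}\cup\{e_{i}\},\\
\Delta^{+}_{C_{n}}&=\{e_{i}\pm e_{j}\mid i<j\}\cup\{2e_{i}\},\\
\Delta^{+}_{D_{n}}&=\{e_{i}\pm e_{j}\mid i<j\}.
\end{align*}
The map we propose sends each edge or loop $e$ to its weight $w_{E}(e)$. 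We check that this map lands in the positive roots and that it is a bijection onto them.

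\textbf{Type $A_{n}$.} The graph $K_{n+1}$ has all edges of sign $+$, with exactly one edge $ij$ for each pair $i<j$. Using $r^{+}_{ij}=1$ with the orientation $i<j$, the weight is $w_{E}(ij)=e_{i}-e_{j}$. This gives a bijection between the edges of $K_{n+1}$ and $\Delta^{+}_{A_{n}}$. The rule $r^{+}_{ji}=-r^{+}_{ij}$ makes the weight independent of how the edge is oriented, since $-(e_{j}-e_{i})=e_{i}-e_{j}$.

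\textbf{Types $B_{n}$, $C_{n}$, $D_{n}$.} Each pair $i<j$ carries exactly two edges, one of each sign. With $r^{\pm}=1$ for $B_{n}$ and $D_{n}$, their weights are $e_{i}-e_{j}$ (the $+$ edge) and $e_{i}+e_{j}$ (the $-$ edge). For $C_{n}$ the vertex weights are $2e_{i}$ and $r^{\pm}=\tfrac12$, so the weights are $\tfrac12(2e_{i}\mp 2e_{j})=e_{i}\mp e_{j}$, the same pair. The loop at vertex $i$, which exists in $K_{n}^{\circ}$ for $B_{n}$ and $C_{n}$ but not in $\pm K_{n}$ for $D_{n}$, has weight $w_{V}(i)$. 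This equals $e_{i}$ for $B_{n}$ and $2e_{i}$ for $C_{n}$, which are exactly the short roots, respectively long roots, in the list above. Counting gives $2\binom{n}{2}+n=n^{2}=|\Delta^{+}|$ for $B_{n}$ and $C_{n}$, and $2\binom{n}{2}=n(n-1)=|\Delta^{+}|$ for $D_{n}$. Since the assigned weights are pairwise distinct, the map is a bijection.

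\textbf{Sign convention.} The only point needing care is a sign-convention mismatch between the definition and the figure. In the general definition the edge with $s(e)=+$ gets weight $r^{+}(w_{V}(p)-w_{V}(q))$, whereas Figure~\ref{example_ABDG} draws solid edges for $s(e)=-$ and labels $a^{-}_{ij}=e_{i}-e_{j}$. We will fix the convention of the defining formula. We then note that swapping the labels $\pm$ merely permutes which edge of each double edge is matched with $e_{i}-e_{j}$ versus $e_{i}+e_{j}$, so bijectivity is unaffected. The rest is routine inspection.
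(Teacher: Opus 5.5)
Your proposal is correct and follows the paper's own approach. The paper gives no written proof beyond stating that the lemma is an easy check against the standard positive root lists in \cite[Chapter V, 6.26]{MiTo}, and your type-by-type comparison with the cardinality count carries out exactly that check. Your remark on the sign convention is also accurate. The defining formula for $w_{E}$ puts differences on $+$ edges, whereas the figures draw the difference edges $a^{-}_{ij}=e_{i}-e_{j}$ as solid, which the caption of Figure~\ref{examples_signed_graphs} declares to be $s(e)=-$. As you note, this only swaps which of the two parallel edges carries $e_{i}-e_{j}$, so bijectivity is unaffected.
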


For the maximal rank subgroup $H\subset G$, the positive sub-roots of $H$ may be regarded as the subgraph of the weighted signed graph $\Gamma_{G}$.
In particular, the set $\Delta_{G,H}/\{\pm 1\}$ corresponds to the graph, say $\Gamma(G,H)$, which consists of the vertices of $\Gamma_{G}$ and the edges of $\Gamma_{G}\setminus \Gamma_{H}$ denoted by $E_{G,H}$. 
So to compute $k(G/H)$ it is enough to check the independence of the
weights of the edges $E_{G,H}$.

\begin{lemma}\label{lm:23dep}
Let $G$ be a simple Lie group of type $A$--$D$, and $H$ be
its proper maximal connected (closed) maximal rank subgroup.
Then one has the following:
\begin{enumerate}
\item $k(G/H)=2$ if and only if the weighted signed graph $\Gamma(G,H)$ has a subgraph (with weighted vertices) appeared in Figure~\ref{example_weighted_vertex}, where $i<j<k$.
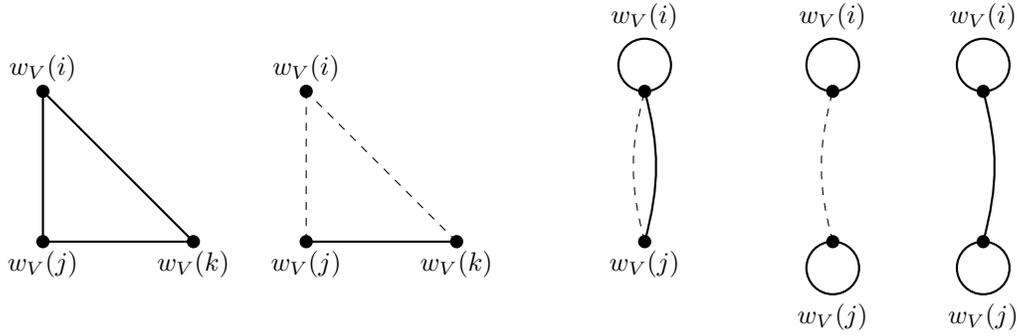
\begin{figure}[H]
\begin{tikzpicture}
\begin{scope}[xscale=0.5, yscale=0.5]

\fill(-2,-2)circle (5pt);
\fill(-2,2)circle (5pt);
\fill(2,-2)circle (5pt);
\draw[thick](-2,-2)--(-2,2);
\node[above] at (-2,2) {$w_{V}(i)$};
\draw[thick](2,-2)--(-2,2);
\node[below] at (-2,-2) {$w_{V}(j)$};
\draw[thick](-2,-2)--(2,-2);
\node[below] at (2,-2) {$w_{V}(k)$};

\fill(5,2)circle (5pt);
\fill(5,-2)circle (5pt);
\fill(9,-2)circle (5pt);
\draw[dashed](5,-2)--(5,2);
\node[above] at (5,2) {$w_{V}(i)$};
\draw[dashed](9,-2)--(5,2);
\node[below] at (5,-2) {$w_{V}(j)$};
\draw[thick](9,-2)--(5,-2);
\node[below] at (9,-2) {$w_{V}(k)$};

\fill(14,2)circle (5pt);
\fill(14,-2)circle (5pt);
\draw[thick](14,-2)to[out=75,in=-75](14,2);
\draw[dashed](14,-2)to[out=105,in=-105](14,2);
\draw[thick](14,2.7) circle [x radius=0.7cm, y radius=0.7cm];
\node[above] at (14,3.4) {$w_{V}(i)$}; 
\node[below] at (14,-2) {$w_{V}(j)$};

\fill(19,2)circle (5pt);
\fill(19,-2)circle (5pt);
\draw[dashed](19,-2)to[out=105,in=-105](19,2);
\draw[thick](19,2.7) circle [x radius=0.7cm, y radius=0.7cm];
\node[above] at (19,3.4) {$w_{V}(i)$};
\draw[thick](19,-2.7) circle [x radius=0.7cm, y radius=0.7cm];
\node[below] at (19,-3.4) {$w_{V}(j)$};

\fill(23,2)circle (5pt);
\fill(23,-2)circle (5pt);
\draw[thick](23,-2)to[out=75,in=-75](23,2);
\draw[thick](23,2.7) circle [x radius=0.7cm, y radius=0.7cm];
\node[above] at (23,3.4) {$w_{V}(i)$};
\draw[thick](23,-2.7) circle [x radius=0.7cm, y radius=0.7cm];
\node[below] at (23,-3.4) {$w_{V}(j)$};

\end{scope}
\end{tikzpicture}
\caption{$2$-independent subgraphs.} 
\label{example_weighted_vertex}
\end{figure}
\item $k(G/H)=3$ if the weighted signed graph $\Gamma(G,H)$ has neither of the subgraphs (with weighted vertices) depicted in Figure~\ref{example_weighted_vertex} but has a weighted subgraph appeared in  Figure~\ref{example_weighted_vertex2}, where $i<j<k<l$.
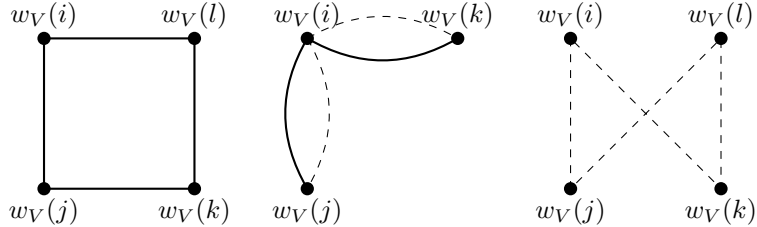
\begin{figure}[H]
\begin{tikzpicture}
\begin{scope}[xscale=0.5, yscale=0.5]

\fill(-2,-2)circle (5pt);
\fill(-2,2)circle (5pt);
\fill(2,-2)circle (5pt);
\fill(2,2)circle (5pt);
\draw[thick](-2,-2)--(-2,2)--(2,2)--(2,-2)--cycle;
\node[above] at (-2,2) {$w_{V}(i)$};
\node[below] at (-2,-2) {$w_{V}(j)$};
\node[below] at (2,-2) {$w_{V}(k)$};
\node[above] at (2,2) {$w_{V}(l)$};

\coordinate (A) at (5,2);
\coordinate (B) at (5,-2);
\coordinate (C) at (9,2);
\fill(A)circle (5pt);
\fill(B)circle (5pt);
\fill(C)circle (5pt);
\draw[thick](A)to[out=240,in=120](B);
\node[above] at (A) {$w_{V}(i)$};
\draw[dashed](A)to[out=300,in=60](B);
\node[below] at (B) {$w_{V}(j)$};
\draw[thick](A)to[out=330,in=210](C);
\node[above] at (C) {$w_{V}(k)$};
\draw[dashed](A)to[out=30,in=150](C);

\coordinate (a) at (12,2);
\coordinate (b) at (12,-2);
\coordinate (c) at (16,-2);
\coordinate (d) at (16,2);
\fill(a)circle (5pt);
\fill(b)circle (5pt);
\fill(c)circle (5pt);
\fill(d)circle (5pt);

\draw[dashed] (a)--(b)--(d)--(c)--(a);

\node[above] at (a) {$w_{V}(i)$};
\node[below] at (b) {$w_{V}(j)$};
\node[below] at (c) {$w_{V}(k)$};
\node[above] at (d) {$w_{V}(l)$};

\end{scope}
\end{tikzpicture}
\caption{$3$-independent subgraphs.}
\label{example_weighted_vertex2}
\end{figure}
\end{enumerate}
\end{lemma}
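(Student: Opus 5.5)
Since the edge weights of $\Gamma(G,H)$ are, by Lemma~\ref{correspondence}, exactly the positive roots in $\Delta^{+}_{G}\setminus\Delta^{+}_{H}$ up to nonzero scalars, and scalars do not affect linear (in)dependence, I will argue entirely with the vectors $e_i\pm e_j$ and $e_i$ (or $2e_i$), read off from edges and loops. Both parts then become statements about which small edge sets of the signed graph carry linearly dependent weights. The plan is to classify minimal dependent sets (circuits) of size $3$ and $4$ among weights of the form $e_i\pm e_j$, $e_i$.

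For part (1), the ``if'' direction is a direct check for each configuration in Figure~\ref{example_weighted_vertex}:
\begin{itemize}
\item the slim triangle gives $(e_i-e_j)+(e_j-e_k)=(e_i-e_k)$;
\item the mixed triangle gives $(e_i+e_j)-(e_j-e_k)=(e_i+e_k)$, with the signs as drawn;
\item the double edge plus one loop gives $(e_i-e_j)+(e_i+e_j)=2e_i$;
\item the two loops plus one edge of either sign gives $e_i\pm e_j$ as a combination of $e_i,e_j$.
\end{itemize}
In each case three pairwise independent weights are dependent, so $k=2$.

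For ``only if'', take a dependent triple $\{a,b,c\}$ of pairwise independent edge weights. Then $c\in\mathrm{span}(a,b)$, so the supports in the $e$-coordinates involve at most three vertices. I will enumerate the cases.
\begin{itemize}
\item If no loop occurs, all three are edges. A short coordinate argument shows their union is either a triangle on three vertices or a double edge together with something else. A dependent triple of edges on three vertices must form a cycle whose sign pattern is balanced in the appropriate sense; this yields exactly the first two pictures, up to relabelling and the symmetry $e_k\mapsto -e_k$. Two edges on the same pair of vertices span $\{e_i,e_j\}$, so the third weight must be a loop there, which is excluded in this case.
\item If one loop occurs, the other two must be the pair $e_i\pm e_j$ on the loop's vertex and a neighbour. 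This is the third picture.
\item If two loops occur, the remaining weight is an edge joining them. These are the last two pictures.
\end{itemize}

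For part (2), I argue as follows. The absence of the Figure~\ref{example_weighted_vertex} configurations gives $k\ge3$ by part (1). The 4-cycles of Figure~\ref{example_weighted_vertex2} give explicit relations:
\begin{itemize}
\item $(e_i-e_j)+(e_j-e_k)+(e_k-e_l)-(e_i-e_l)=0$;
\item $(e_i-e_j)+(e_i+e_j)-(e_i-e_k)-(e_i+e_k)=0$;
\item a similar signed relation for the all-$+$ 4-cycle, which is even, hence balanced.
\end{itemize}
So some 4-subset is dependent, and $k\le3$, giving $k=3$.

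The main obstacle is the exhaustive case analysis in the ``only if'' part of (1), specifically pinning down which sign patterns on a triangle produce dependence. I will use the standard Zaslavsky fact: edge weights $e_i\pm e_j$ of a cycle are dependent iff the cycle is balanced, meaning the product of signs in the appropriate convention is positive. I will then check that the balanced triangles coincide with the drawn ones after normalising orientations. I also need to make sure the normalisations of $r^{\pm}_{ij}$ and the ordering $i<j<k$ are consistent with the drawn pictures.
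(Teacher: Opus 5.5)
Your proposal is correct and takes essentially the paper's route. Part (1) ``if'' is a direct check. For ``only if'' you reduce to at most three vertices and split by number of loops; for triangles you sort the sign patterns, where your balance criterion just replaces the paper's explicit check of the four patterns. For (2) you write out the 4-cycle relations, which the paper leaves as straightforward.

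Two small things to tidy. The bound of at most three vertices needs its one-line reason: in a circuit of three vectors, each coordinate must occur in at least two of them, and the total support size is at most $6$. Also, state the trivial three-loop case explicitly.
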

\begin{proof}
For $\Gamma(G,H)$, since the statement $(2)$ is straightforward, we only prove the statement $(1)$.
If there is a subgraph which is drawn in Figure~\ref{example_weighted_vertex}, then by the definition of weighted signed graphs, $\Gamma(G,H)$ is $2$-independent, i.e., $k(G/H)=2$. 
 
Assume that $k(G/H)=2$, i.e., there are three edges (or loops) that are 2-independent in $\Gamma(G,H)$.
Let $\Gamma'$ be a subgraph in $\Gamma(G,H)$ consisting of such three eges (or loops). 
We first claim that $\Gamma'$ has two or three vertices.
It is easy to verify that a subgraph in $\Gamma(G,H)$ with three edges (or loops) and more than four vertices is one of the following graphs: 
\begin{itemize}
\item three parallel edges (six vertices);
\item two parallel edges and a separate loop (five vertices);
\item two connected edges with 3 vertices (i.e., V shape) and a separate edge (five vertices);
\item two connected edges with 3 vertices (i.e., V shape) and a separate loop (four vertices);
\item one edge and two separate loops (four vertices);
\item one multiple edge and a separate edge (four vertices);
\item three connected edges with 4 vertices (i.e., N shape)
\end{itemize}
By the definition of weighted signed graphs, all weights of these graphs correspond to linearly independent vectors. 
Hence, there are no subgraphs of three 2-independent edges (or loops) with more than 4 vertices.

If $\Gamma'$ consists of three vertices without loops, then by the definition of weighted signed graph there are the following four types in Figure~\ref{three_vertices} for some edges $i<j<k$.
\begin{figure}[H]
\begin{tikzpicture}
\begin{scope}[xscale=0.4, yscale=0.4]

\fill(-2,-2)circle (5pt);
\fill(-2,2)circle (5pt);
\fill(2,-2)circle (5pt);
\draw[thick](-2,-2)--(-2,2);
\node[above] at (-2,2) {$w_{V}(i)$};
\draw[thick](2,-2)--(-2,2);
\node[below] at (-2,-2) {$w_{V}(j)$};
\draw[thick](-2,-2)--(2,-2);
\node[below] at (2,-2) {$w_{V}(k)$};

\fill(5,2)circle (5pt);
\fill(5,-2)circle (5pt);
\fill(9,-2)circle (5pt);
\draw[dashed](5,-2)--(5,2);
\node[above] at (5,2) {$w_{V}(i)$};
\draw[dashed](9,-2)--(5,2);
\node[below] at (5,-2) {$w_{V}(j)$};
\draw[thick](9,-2)--(5,-2);
\node[below] at (9,-2) {$w_{V}(k)$};

\fill(12,-2)circle (5pt);
\fill(12,2)circle (5pt);
\fill(16,-2)circle (5pt);
\draw[thick](12,-2)--(12,2);
\node[above] at (12,2) {$w_{V}(i)$};
\draw[thick](16,-2)--(12,2);
\node[below] at (12,-2) {$w_{V}(j)$};
\draw[dashed](12,-2)--(16,-2);
\node[below] at (16,-2) {$w_{V}(k)$};

\fill(19,-2)circle (5pt);
\fill(19,2)circle (5pt);
\fill(23,-2)circle (5pt);
\draw[dashed](19,-2)--(19,2);
\node[above] at (19,2) {$w_{V}(i)$};
\draw[dashed](23,-2)--(19,2);
\node[below] at (19,-2) {$w_{V}(j)$};
\draw[dashed](19,-2)--(23,-2);
\node[below] at (23,-2) {$w_{V}(k)$};

\end{scope}
\end{tikzpicture}
\caption{}
\label{three_vertices} 
\end{figure}

For the left two cases above, they satisfy that 
\begin{align*}
& w_{E}(ij)+w_{E}(jk)=(w_{V}(i)-w_{V}(j))+(w_{V}(j)-w_{V}(k))=w_{V}(i)-w_{V}(k)=w_{E}(ik), \\
& w_{E}(ik)+w_{E}(jk)=(w_{V}(i)+w_{V}(k))+(w_{V}(j)-w_{V}(k))=w_{V}(i)+w_{V}(j)=w_{E}(ij).
\end{align*}
Thus, they satisfy the $2$-independence.
However, for the right two cases above, the weights 
\begin{align*}
& \{w_{E}(ij), w_{E}(jk), w_{E}(ik)\}=\{w_{V}(i)-w_{V}(j), w_{V}(j)+w_{V}(k), w_{V}(i)-w_{V}(k)\}, \\
& \{w_{E}(ij), w_{E}(jk), w_{E}(ik)\}=\{w_{V}(i)+w_{V}(j), w_{V}(j)+w_{V}(k), w_{V}(i)+w_{V}(k)\} 
\end{align*}
are linearly independent. 
Therefore, they are not $2$-independent.
Consequently, if $\Gamma'$ consists of three vertices without loops, then it must be the 1st and 2nd graph in Figure~\ref{example_weighted_vertex}.

If $\Gamma'$ consists of three vertices with a loop, then by the definition of weighted signed graph one of the cases of such $\Gamma'$ is as Figure~\ref{three_vertices2}.
\begin{figure}[H]
\begin{tikzpicture}
\begin{scope}[xscale=0.4, yscale=0.4]

\draw[thick](-2,2.7) circle [x radius=0.7cm, y radius=0.7cm];
\fill(-2,-2)circle (5pt);
\fill(-2,2)circle (5pt);
\fill(2,-2)circle (5pt);
\draw[thick](-2,-2)--(-2,2);
\node[above] at (-2,3.4) {$w_{V}(i)$};
\draw[thick](2,-2)--(-2,2);
\node[left] at (-2,-2) {$w_{V}(j)$};
\node[right] at (2,-2) {$w_{V}(k)$};

\end{scope}
\end{tikzpicture}
\caption{}
\label{three_vertices2} 
\end{figure}
However, we have that 
\begin{align*}
\{w_{V}(i), w_{E}(ij), w_{E}(ik)\}=
\{w_{V}(i), w_{V}(i)-w_{V}(j), w_{V}(i)-w_{V}(k)\} 
\end{align*}
is linearly independent. 
Therefore, this is not $2$-independent.
Similarly, it is easy to verify that the other cases are not $2$-independent. 
We can also check that the graph consisting of three loops is not  $2$-independent.
Therefore, we complete the case when $\Gamma'$ has three vertices.

If $\Gamma'$ consists of two vertices, then by the definition of weighted signed graph  there are the following possibilities for $i<j$ as in Figure~\ref{two_vertices}.
\begin{figure}[h]
\begin{tikzpicture}
\begin{scope}[xscale=0.4, yscale=0.4]

\fill(2,2)circle (5pt);
\fill(2,-2)circle (5pt);
\draw[thick](2,-2)to[out=75,in=-75](2,2);
\draw[dashed](2,-2)to[out=105,in=-105](2,2);
\draw[thick](2,2.7) circle [x radius=0.7cm, y radius=0.7cm];
\node[above] at (2,3.4) {$w_{V}(i)$};
\node[below] at (2,-2) {$w_{V}(j)$};

\fill(6,2)circle (5pt);
\fill(6,-2)circle (5pt);
\draw[dashed](6,-2)to[out=105,in=-105](6,2);
\draw[thick](6,2.7) circle [x radius=0.7cm, y radius=0.7cm];
\node[above] at (6,3.4) {$w_{V}(i)$};
\draw[thick](6,-2.7) circle [x radius=0.7cm, y radius=0.7cm];
\node[below] at (6,-3.4) {$w_{V}(j)$};

\fill(10,2)circle (5pt);
\fill(10,-2)circle (5pt);
\draw[thick](10,-2)to[out=75,in=-75](10,2);
\draw[thick](10,2.7) circle [x radius=0.7cm, y radius=0.7cm];
\node[above] at (10,3.4) {$w_{V}(i)$};
\draw[thick](10,-2.7) circle [x radius=0.7cm, y radius=0.7cm];
\node[below] at (10,-3.4) {$w_{V}(j)$};

\end{scope}
\end{tikzpicture}
\caption{} 
\label{two_vertices}
\end{figure}

From the left, the weights on the loops and edges consist of the following set:
\begin{align*}
& \{e_{i}, e_{i}+e_{j}, e_{i}-e_{j}\}\ {\rm or}\ \{2e_{i}, e_{i}+e_{j}, e_{i}-e_{j}\}; \\
& \{e_{i}, e_{i}+e_{j}, e_{j}\}\ {\rm or}\ \{2e_{i}, e_{i}+e_{j}, 2e_{j}\}; \\
& \{e_{i}, e_{i}-e_{j}, e_{j}\}\ {\rm or}\ \{2e_{i}, e_{i}-e_{j}, 2e_{j}\}.
\end{align*}
All of them are $2$-independent.
Consequently, if $\Gamma'$ consists of two vertices, then it must be the last three graphs in Figure~\ref{example_weighted_vertex}.
\end{proof}

Starting from the next section, we compute $k(G/H)$ case by case for $(G,H)$ in Theorem~\ref{thm:cart}.

\section{Type $A$}
\label{sect:5}
We first compute $k(G/H)$ for the case when $G$ is of type $A_{n}$ for $n\ge 2$ (Note that if $n=2$, then $k(G/H)=2$).
In this case, the corresponding weighted signed graph of $G$ is $K_{n+1}$.

\subsection{Case 1: $H=A_{n-1}\times T^{1}$} 
\label{sect:5.1}
In this case, the independence of $\Gamma(G,H)$ can be computed by removing the edges of the weighted signed graph $K_{n}$ from $K_{n+1}$.
The obtained graph consists of $n$ edges from one-vertex (see Figure~\ref{example_typeA_Case1} for the case when $n=5$).
Therefore, the set of vectors $\Delta_{G,H}/\{\pm 1\}$ is $n$-independent, i.e., linearly independent.
\begin{figure}[H]
\begin{tikzpicture}
\begin{scope}[xscale=0.3, yscale=0.3]

\coordinate (A) at (4,0);
\coordinate (B) at ({4*cos(2*pi/6 r)},{4*sin(2*pi/6 r)});
\coordinate (C) at ({4*cos(4*pi/6 r)},{4*sin(4*pi/6 r)});
\coordinate (D) at ({4*cos(6*pi/6 r)},{4*sin(6*pi/6 r)});
\coordinate (E) at ({4*cos(8*pi/6 r)},{4*sin(8*pi/6 r)});
\coordinate (F) at ({4*cos(10*pi/6 r)},{4*sin(10*pi/6 r)});

\fill(A) circle (5pt);
\node[right] at (A) {$e_{1}$};
\fill(B)circle (5pt);
\node[above] at (B) {$e_{2}$};
\fill(C)circle (5pt);
\node[above] at (C) {$e_{3}$};
\fill(D)circle (5pt);
\node[left] at (D) {$e_{4}$};
\fill(E)circle (5pt);
\node[below] at (E) {$e_{5}$};
\fill(F)circle (5pt);
\node[below] at (F) {$e_{6}$};

\draw (A)--(B)--(C)--(D)--(E)--(F)--(A);
\draw (A)--(C);
\draw (A)--(D);
\draw (A)--(E);
\draw (A)--(F);
\draw (B)--(D);
\draw (B)--(E);
\draw (B)--(F);
\draw (C)--(E);
\draw (C)--(F);
\draw (D)--(F);

\node[right] at (6,0) {$-$};

\coordinate (a) at (18,0);
\coordinate (b) at ({4*cos(2*pi/6 r)+14},{4*sin(2*pi/6 r)});
\coordinate (c) at ({4*cos(4*pi/6 r)+14},{4*sin(4*pi/6 r)});
\coordinate (d) at ({4*cos(6*pi/6 r)+14},{4*sin(6*pi/6 r)});
\coordinate (e) at ({4*cos(8*pi/6 r)+14},{4*sin(8*pi/6 r)});
\coordinate (f) at ({4*cos(10*pi/6 r)+14},{4*sin(10*pi/6 r)});

\fill(a) circle (5pt);
\node[right] at (a) {$e_{1}$};
\fill(b)circle (5pt);
\node[above] at (b) {$e_{2}$};
\fill(c)circle (5pt);
\node[above] at (c) {$e_{3}$};
\fill(d)circle (5pt);
\node[left] at (d) {$e_{4}$};
\fill(e)circle (5pt);
\node[below] at (e) {$e_{5}$};
\fill(f)circle (5pt);
\node[below] at (f) {$e_{6}$};

\draw (a)--(b)--(c)--(d)--(e)--(a);
\draw (a)--(c)--(e)--(b)--(d)--(a);

\node[right] at (20,0) {$=$};

\coordinate (1) at (32,0);
\coordinate (2) at ({4*cos(2*pi/6 r)+28},{4*sin(2*pi/6 r)});
\coordinate (3) at ({4*cos(4*pi/6 r)+28},{4*sin(4*pi/6 r)});
\coordinate (4) at ({4*cos(6*pi/6 r)+28},{4*sin(6*pi/6 r)});
\coordinate (5) at ({4*cos(8*pi/6 r)+28},{4*sin(8*pi/6 r)});
\coordinate (6) at ({4*cos(10*pi/6 r)+28},{4*sin(10*pi/6 r)});

\fill(1) circle (5pt);
\node[right] at (1) {$e_{1}$};
\fill(2)circle (5pt);
\node[above] at (2) {$e_{2}$};
\fill(3)circle (5pt);
\node[above] at (3) {$e_{3}$};
\fill(4)circle (5pt);
\node[left] at (4) {$e_{4}$};
\fill(5)circle (5pt);
\node[below] at (5) {$e_{5}$};
\fill(6)circle (5pt);
\node[below] at (6) {$e_{6}$};

\draw (6)--(1);
\draw (6)--(2);
\draw (6)--(3);
\draw (6)--(4);
\draw (6)--(5);

\end{scope}
\end{tikzpicture}
\caption{The weighted signed graph corresponding to $A_{5}$ and $A_{4}\times T^1$. The obtained graph consists of $5$ out-going edges from the vertex $e_{6}$. This means that the obtained vectors are $\{e_{6}-e_{i}\ |\ i=1,\ldots, 5\}$; therefore, this is $5$-independent (lienarly independent). }
\label{example_typeA_Case1}
\end{figure}
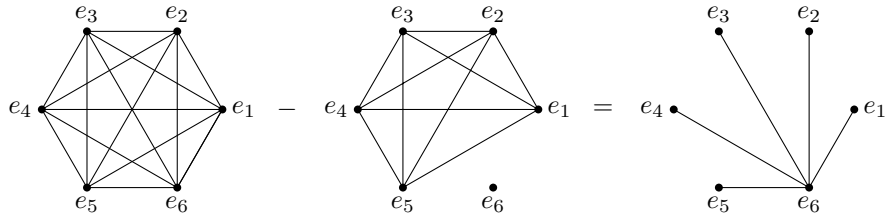

This shows Theorem~\ref{main} (1) Type $A_{n}$.

\subsection{Case 2: $H=A_{i}\times A_{n-1-i}\times T^{1}$ $(1<i<n-1)$} 
\label{sect:5.2}
This case can be computed by removing the edges of the weighted signed graph $K_{i+1}\sqcup K_{n-i}$ from $K_{n+1}$.
The obtained graph is the bipartite graph $K_{i+1,n-i}$.
Then, it does not have a $3$-cycle but it has the $4$-cycle which appears in the 1st left Figure~\ref{example_weighted_vertex2} (see Figure~\ref{example_typeA_Case2} for the case when $n=5$ and $i=3$).
Therefore, the set of vectors $\Delta_{G,H}/\{\pm 1\}$ is $3$-independent.
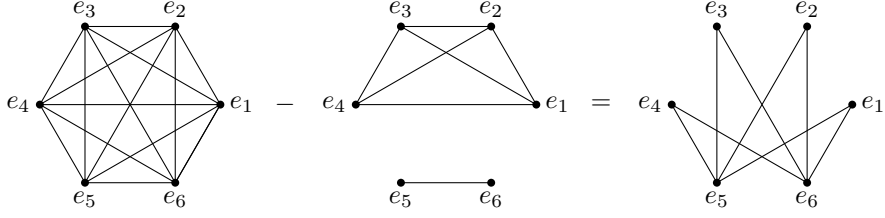
\begin{figure}[H]
\begin{tikzpicture}
\begin{scope}[xscale=0.3, yscale=0.3]
\coordinate (A) at (4,0);
\coordinate (B) at ({4*cos(2*pi/6 r)},{4*sin(2*pi/6 r)});
\coordinate (C) at ({4*cos(4*pi/6 r)},{4*sin(4*pi/6 r)});
\coordinate (D) at ({4*cos(6*pi/6 r)},{4*sin(6*pi/6 r)});
\coordinate (E) at ({4*cos(8*pi/6 r)},{4*sin(8*pi/6 r)});
\coordinate (F) at ({4*cos(10*pi/6 r)},{4*sin(10*pi/6 r)});

\fill(A) circle (5pt);
\node[right] at (A) {$e_{1}$};
\fill(B)circle (5pt);
\node[above] at (B) {$e_{2}$};
\fill(C)circle (5pt);
\node[above] at (C) {$e_{3}$};
\fill(D)circle (5pt);
\node[left] at (D) {$e_{4}$};
\fill(E)circle (5pt);
\node[below] at (E) {$e_{5}$};
\fill(F)circle (5pt);
\node[below] at (F) {$e_{6}$};

\draw (A)--(B)--(C)--(D)--(E)--(F)--(A);
\draw (A)--(C);
\draw (A)--(D);
\draw (A)--(E);
\draw (A)--(F);
\draw (B)--(D);
\draw (B)--(E);
\draw (B)--(F);
\draw (C)--(E);
\draw (C)--(F);
\draw (D)--(F);

\node[right] at (6,0) {$-$};

\coordinate (a) at (18,0);
\coordinate (b) at ({4*cos(2*pi/6 r)+14},{4*sin(2*pi/6 r)});
\coordinate (c) at ({4*cos(4*pi/6 r)+14},{4*sin(4*pi/6 r)});
\coordinate (d) at ({4*cos(6*pi/6 r)+14},{4*sin(6*pi/6 r)});
\coordinate (e) at ({4*cos(8*pi/6 r)+14},{4*sin(8*pi/6 r)});
\coordinate (f) at ({4*cos(10*pi/6 r)+14},{4*sin(10*pi/6 r)});

\fill(a) circle (5pt);
\node[right] at (a) {$e_{1}$};
\fill(b)circle (5pt);
\node[above] at (b) {$e_{2}$};
\fill(c)circle (5pt);
\node[above] at (c) {$e_{3}$};
\fill(d)circle (5pt);
\node[left] at (d) {$e_{4}$};
\fill(e)circle (5pt);
\node[below] at (e) {$e_{5}$};
\fill(f)circle (5pt);
\node[below] at (f) {$e_{6}$};

\draw (a)--(b)--(c)--(d)--(a);
\draw (a)--(c);
\draw (b)--(d);

\draw (e)--(f);

\node[right] at (20,0) {$=$};

\coordinate (1) at (32,0);
\coordinate (2) at ({4*cos(2*pi/6 r)+28},{4*sin(2*pi/6 r)});
\coordinate (3) at ({4*cos(4*pi/6 r)+28},{4*sin(4*pi/6 r)});
\coordinate (4) at ({4*cos(6*pi/6 r)+28},{4*sin(6*pi/6 r)});
\coordinate (5) at ({4*cos(8*pi/6 r)+28},{4*sin(8*pi/6 r)});
\coordinate (6) at ({4*cos(10*pi/6 r)+28},{4*sin(10*pi/6 r)});

\fill(1) circle (5pt);
\node[right] at (1) {$e_{1}$};
\fill(2)circle (5pt);
\node[above] at (2) {$e_{2}$};
\fill(3)circle (5pt);
\node[above] at (3) {$e_{3}$};
\fill(4)circle (5pt);
\node[left] at (4) {$e_{4}$};
\fill(5)circle (5pt);
\node[below] at (5) {$e_{5}$};
\fill(6)circle (5pt);
\node[below] at (6) {$e_{6}$};

\draw (1)--(6);
\draw (1)--(5);
\draw (2)--(6);
\draw (2)--(5);
\draw (3)--(6);
\draw (3)--(5);
\draw (4)--(6);
\draw (4)--(5);

\end{scope}
\end{tikzpicture}
\caption{The weighted signed graph corresponding to $A_{5}$ and $A_{3}\times A_{1}\times T^1$.
The obtained graph is the bipartite ($2$-partite) graph $K_{4,2}$.
The bipartite graph does not have $3$-cycle but have $4$-cycle.
This means that there are $4$ vertices whose weights are $3$-independent; therefore, this is $3$-independent.}
\label{example_typeA_Case2}
\end{figure}

This shows Theorem~\ref{main} (2) $(A_{n}, A_{i}\times A_{n-i-1}\times T^{1})$ for $1<i<n-1$.

This establishes the computations of $k(G/H)$ if $G$ is of type $A_{n}$.

\section{Type $B$ and $C$}
\label{sect:6}

In this section, we compute $k(G/H)$ for the case when $G$ is of type $B_{n}$ for $n\ge 2$ and of type $C_{n}$ for $n\ge 2$.
In this case, the corresponding weighted signed graph of $G$ is $K_{n}^{\circ}$.

\subsection{Case 1: $(G,H)=(B_{n},D_{i}\times B_{n-i})$ $(1\le i< n)$}
\label{sect:6.1}
This case can be computed by removing the edges of the weighted signed graph $\pm K_{i}\sqcup K_{n-i}^{\circ}$ from $K_{n}^{\circ}$ with $w_{V}(i)=e_{i}$, where we regard $\pm K_{1}=\emptyset$.
The obtained graph has a subgraph of one loop with multiple edge (see Figure~\ref{example_typeB_Case1} for $(n,i)=(4,1)$ and Figure~\ref{example_typeB_Case2} for $(n,i)=(6,2)$).
This is $2$-independent because there is a subgraph in  Figure~\ref{example_weighted_vertex}.
Therefore, the set of vectors $\Delta_{G,H}/\{\pm 1\}$ is $2$-independent.

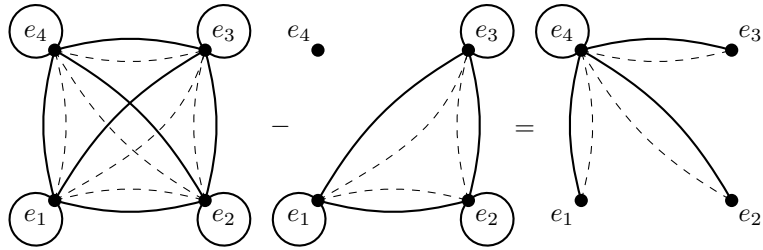
\begin{figure}[H]
\begin{tikzpicture}
\begin{scope}[xscale=0.5, yscale=0.5]

\fill(8,-2)circle (5pt);
\fill(12,-2)circle (5pt);
\fill(12,2)circle (5pt);
\fill(8,2)circle (5pt);

\node[below] at (7.5,-2) {$e_{1}$};
\node[below] at (12.5,-2) {$e_{2}$};
\node[above] at (12.5,2) {$e_{3}$};
\node[above] at (7.5,2) {$e_{4}$};

\draw[dashed](8,-2)to[out=15,in=165](12,-2);
\draw[thick](8,-2)to[out=-15,in=-165](12,-2);
\draw[thick](8,2)to[out=15,in=165](12,2);
\draw[dashed](8,2)to[out=-15,in=-165](12,2);
\draw[thick](12,-2)to[out=75,in=-75](12,2);
\draw[dashed](12,-2)to[out=105,in=-105](12,2);
\draw[thick](8,-2)to[out=105,in=-105](8,2);
\draw[dashed](8,-2)to[out=75,in=-75](8,2);
\draw[thick](8,-2)to[out=60,in=-150](12,2);
\draw[dashed](8,-2)to[out=30,in=-105](12,2);
\draw[thick](8,2)to[out=-30,in=120](12,-2);
\draw[dashed](8,2)to[out=-60,in=150](12,-2);

\draw[thick](7.5,-2.5) circle [x radius=0.7cm, y radius=0.7cm, rotate=30];
\draw[thick](7.5,2.5) circle [x radius=0.7cm, y radius=0.7cm, rotate=30];
\draw[thick](12.5,-2.5) circle [x radius=0.7cm, y radius=0.7cm, rotate=30];
\draw[thick](12.5,2.5) circle [x radius=0.7cm, y radius=0.7cm, rotate=30];

\node[above] at (14,-0.5) {$-$};


\fill(15,-2)circle (5pt);
\fill(19,-2)circle (5pt);
\fill(19,2)circle (5pt);
\fill(15,2)circle (5pt);

\node[below] at (14.5,-2) {$e_{1}$};
\node[below] at (19.5,-2) {$e_{2}$};
\node[above] at (19.5,2) {$e_{3}$};
\node[above] at (14.5,2) {$e_{4}$};

\draw[dashed](15,-2)to[out=15,in=165](19,-2);
\draw[thick](15,-2)to[out=-15,in=-165](19,-2);
\draw[thick](19,-2)to[out=75,in=-75](19,2);
\draw[dashed](19,-2)to[out=105,in=-105](19,2);
\draw[thick](15,-2)to[out=60,in=-150](19,2);
\draw[dashed](15,-2)to[out=30,in=-105](19,2);

\draw[thick](14.5,-2.5) circle [x radius=0.7cm, y radius=0.7cm, rotate=30];
\draw[thick](19.5,-2.5) circle [x radius=0.7cm, y radius=0.7cm, rotate=30];
\draw[thick](19.5,2.5) circle [x radius=0.7cm, y radius=0.7cm, rotate=30];

\node[above] at (20.5,-0.5) {$=$};


\fill(22,-2)circle (5pt);
\fill(26,-2)circle (5pt);
\fill(26,2)circle (5pt);
\fill(22,2)circle (5pt);

\node[below] at (21.5,-2) {$e_{1}$};
\node[below] at (26.5,-2) {$e_{2}$};
\node[above] at (26.5,2) {$e_{3}$};
\node[above] at (21.5,2) {$e_{4}$};

\draw[thick](22,2)to[out=15,in=165](26,2);
\draw[dashed](22,2)to[out=-15,in=-165](26,2);
\draw[thick](22,-2)to[out=105,in=-105](22,2);
\draw[dashed](22,-2)to[out=75,in=-75](22,2);
\draw[thick](22,2)to[out=-30,in=120](26,-2);
\draw[dashed](22,2)to[out=-60,in=150](26,-2);

\draw[thick](21.5,2.5) circle [x radius=0.7cm, y radius=0.7cm, rotate=30];

\end{scope}
\end{tikzpicture}
\caption{The weighted signed graph corresponding to $B_{4}$ and $D_{1}\times B_{3}=T^{1}\times B_{3}$. This is $2$-independent. }
\label{example_typeB_Case1}
\end{figure}

\begin{figure}[H]
\begin{tikzpicture}
\begin{scope}[xscale=0.3, yscale=0.3]

\coordinate (A) at (5,0);
\coordinate (B) at ({5*cos(2*pi/6 r)},{5*sin(2*pi/6 r)});
\coordinate (C) at ({5*cos(4*pi/6 r)},{5*sin(4*pi/6 r)});
\coordinate (D) at ({5*cos(6*pi/6 r)},{5*sin(6*pi/6 r)});
\coordinate (E) at ({5*cos(8*pi/6 r)},{5*sin(8*pi/6 r)});
\coordinate (F) at ({5*cos(10*pi/6 r)},{5*sin(10*pi/6 r)});

\draw[thick](5.5,0) circle (0.5cm);
\draw[thick](3,4.5) circle [x radius=0.5cm, y radius=0.5cm, rotate=30];
\draw[thick](-3,4.5) circle [x radius=0.5cm, y radius=0.5cm, rotate=30];
\draw[thick](-5.5,0) circle (0.5cm);
\draw[thick](-3,-4.5) circle [x radius=0.5cm, y radius=0.5cm, rotate=30];
\draw[thick](3,-4.5) circle [x radius=0.5cm, y radius=0.5cm, rotate=30];

\fill(A) circle (5pt);
\node[label={300:$e_{1}$}] at (A) {};
\fill(B)circle (5pt);
\node[label={100:$e_{2}$}] at (B) {};
\fill(C)circle (5pt);
\node[label={80:$e_{3}$}] at (C) {};
\fill(D)circle (5pt);
\node[label={100:$e_{4}$}] at (D) {};
\fill(E)circle (5pt);
\node[label={300:$e_{5}$}] at (E) {};
\fill(F)circle (5pt);
\node[label={260:$e_{6}$}] at (F) {};

\draw[thick] (A)--(B)--(C)--(D)--(E)--(F)--(A);
\draw[thick] (A)--(C);
\draw[thick] (A)--(D);
\draw[thick] (A)--(E);
\draw[thick] (B)--(D);
\draw[thick] (B)--(E);
\draw[thick] (B)--(F);
\draw[thick] (C)--(D);
\draw[thick] (C)--(E);
\draw[thick] (C)--(F);
\draw[thick] (D)--(F);

\draw[dashed](A)to[out=105,in=-45](B); 
\draw[dashed](A)to[out=135,in=-15](C); 
\draw[dashed](A)to[out=165,in=15](D); 
\draw[dashed](A)to[out=195,in=45](E);
\draw[dashed](A)to[out=225,in=75](F);
\draw[dashed](B)to[out=165,in=15](C);
\draw[dashed](B)to[out=195,in=45](D);
\draw[dashed](B)to[out=225,in=75](E);
\draw[dashed](B)to[out=255,in=105](F);
\draw[dashed](C)to[out=225,in=75](D);
\draw[dashed](C)to[out=255,in=105](E);
\draw[dashed](C)to[out=285,in=135](F);
\draw[dashed](D)to[out=285,in=135](E);
\draw[dashed](D)to[out=315,in=165](F);
\draw[dashed](F)to[out=165,in=15](E); 

\node[right] at (7,0) {$-$};

\coordinate (a) at (20,0);
\coordinate (b) at ({5*cos(2*pi/6 r)+15},{5*sin(2*pi/6 r)});
\coordinate (c) at ({5*cos(4*pi/6 r)+15},{5*sin(4*pi/6 r)});
\coordinate (d) at ({5*cos(6*pi/6 r)+15},{5*sin(6*pi/6 r)});
\coordinate (e) at ({5*cos(8*pi/6 r)+15},{5*sin(8*pi/6 r)});
\coordinate (f) at ({5*cos(10*pi/6 r)+15},{5*sin(10*pi/6 r)});

\draw[thick](12,-4.5) circle [x radius=0.5cm, y radius=0.5cm, rotate=30];
\draw[thick](18,-4.5) circle [x radius=0.5cm, y radius=0.5cm, rotate=30];

\fill(a) circle (5pt);
\node[right] at (a) {$e_{1}$};
\fill(b)circle (5pt);
\node[above] at (b) {$e_{2}$};
\fill(c)circle (5pt);
\node[above] at (c) {$e_{3}$};
\fill(d)circle (5pt);
\node[left] at (d) {$e_{4}$};
\fill(e)circle (5pt);
\node[below] at (e) {$e_{5}$};
\fill(f)circle (5pt);
\node[below] at (f) {$e_{6}$};

\draw[thick] (a)--(b)--(c)--(d)--(a);
\draw[thick] (a)--(c);
\draw[thick] (b)--(d);
\draw[thick] (e)--(f);

\draw[dashed](a)to[out=105,in=-45](b); 
\draw[dashed](a)to[out=135,in=-15](c); 
\draw[dashed](a)to[out=165,in=15](d); 
\draw[dashed](b)to[out=165,in=15](c);
\draw[dashed](b)to[out=195,in=45](d);
\draw[dashed](c)to[out=225,in=75](d);

\draw[dashed](f)to[out=165,in=15](e);

\node[right] at (22,0) {$=$};

\coordinate (1) at (36,0);
\coordinate (2) at ({5*cos(2*pi/6 r)+31},{5*sin(2*pi/6 r)});
\coordinate (3) at ({5*cos(4*pi/6 r)+31},{5*sin(4*pi/6 r)});
\coordinate (4) at ({5*cos(6*pi/6 r)+31},{5*sin(6*pi/6 r)});
\coordinate (5) at ({5*cos(8*pi/6 r)+31},{5*sin(8*pi/6 r)});
\coordinate (6) at ({5*cos(10*pi/6 r)+31},{5*sin(10*pi/6 r)});

\draw[thick](36.5,0) circle (0.5cm);
\draw[thick](34,4.5) circle [x radius=0.5cm, y radius=0.5cm, rotate=30];
\draw[thick](28,4.5) circle [x radius=0.5cm, y radius=0.5cm, rotate=30];
\draw[thick](25.5,0) circle (0.5cm);

\fill(1) circle (5pt);

\node[label={300:$e_{1}$}] at (1) {};
\fill(B)circle (5pt);
\node[label={100:$e_{2}$}] at (2) {};
\fill(C)circle (5pt);
\node[label={80:$e_{3}$}] at (3) {};
\fill(D)circle (5pt);
\node[label={100:$e_{4}$}] at (4) {};
\fill(E)circle (5pt);
\node[label={300:$e_{5}$}] at (5) {};
\fill(F)circle (5pt);
\node[label={260:$e_{6}$}] at (6) {};

\draw[thick] (5)--(1);
\draw[thick] (5)--(2);
\draw[thick] (5)--(3);
\draw[thick] (5)--(4);
\draw[thick] (6)--(1);
\draw[thick] (6)--(2);
\draw[thick] (6)--(3);
\draw[thick] (6)--(4);

\draw[dashed](1)to[out=195,in=45](5);
\draw[dashed](1)to[out=225,in=75](6);
\draw[dashed](2)to[out=225,in=75](5);
\draw[dashed](2)to[out=255,in=105](6);
\draw[dashed](3)to[out=255,in=105](5);
\draw[dashed](3)to[out=285,in=135](6);
\draw[dashed](4)to[out=285,in=135](5);
\draw[dashed](4)to[out=315,in=165](6);

\end{scope}
\end{tikzpicture}
\caption{The weighted signed graph corresponding to $B_{6}$ and $D_{4}\times B_{2}$. 
This is $2$-independent. }
\label{example_typeB_Case2}
\end{figure}
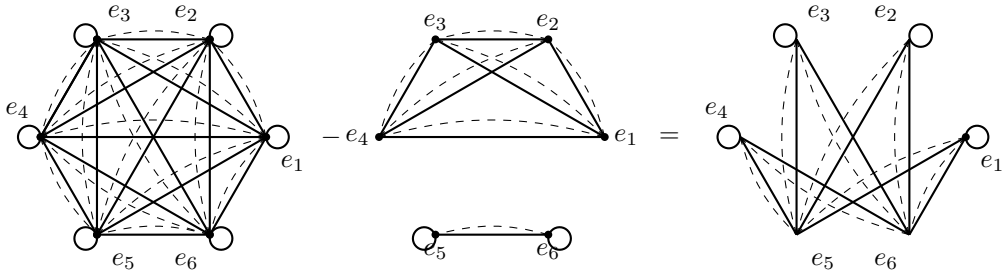

This proves that $(B_{n},D_{i}\times B_{n-i})$ $(1\le i< n)$ is one of the cases in Theorem~\ref{main} (3).

\subsection{Case 2: $(G,H)=(B_{n},D_{n})$}
\label{sect:6.2}

This case can be computed by removing the edges of the weighted signed graph $\pm K_{n}$ from $K_{n}^{\circ}$ with $w_{V}(i)=e_{i}$.
The obtained graph consists of the $n$ loops (see Figure~\ref{example_typeB_Case3} for the case when $n=4$).
Therefore, the set of vectors $\Delta_{G,H}/\{\pm 1\}$ is $n$-independent.

\begin{figure}[H]
\begin{tikzpicture}
\begin{scope}[xscale=0.5, yscale=0.5]

\fill(8,-2)circle (5pt);
\fill(12,-2)circle (5pt);
\fill(12,2)circle (5pt);
\fill(8,2)circle (5pt);

\node[below] at (7.5,-2) {$e_{1}$};
\node[below] at (12.5,-2) {$e_{2}$};
\node[above] at (12.5,2) {$e_{3}$};
\node[above] at (7.5,2) {$e_{4}$};

\draw[dashed](8,-2)to[out=15,in=165](12,-2);
\draw[thick](8,-2)to[out=-15,in=-165](12,-2);
\draw[thick](8,2)to[out=15,in=165](12,2);
\draw[dashed](8,2)to[out=-15,in=-165](12,2);
\draw[thick](12,-2)to[out=75,in=-75](12,2);
\draw[dashed](12,-2)to[out=105,in=-105](12,2);
\draw[thick](8,-2)to[out=105,in=-105](8,2);
\draw[dashed](8,-2)to[out=75,in=-75](8,2);
\draw[thick](8,-2)to[out=60,in=-150](12,2);
\draw[dashed](8,-2)to[out=30,in=-105](12,2);
\draw[thick](8,2)to[out=-30,in=120](12,-2);
\draw[dashed](8,2)to[out=-60,in=150](12,-2);

\draw[thick](7.5,-2.5) circle [x radius=0.7cm, y radius=0.7cm, rotate=30];
\draw[thick](7.5,2.5) circle [x radius=0.7cm, y radius=0.7cm, rotate=30];
\draw[thick](12.5,-2.5) circle [x radius=0.7cm, y radius=0.7cm, rotate=30];
\draw[thick](12.5,2.5) circle [x radius=0.7cm, y radius=0.7cm, rotate=30];

\node[above] at (14,-0.5) {$-$};


\fill(15,-2)circle (5pt);
\fill(19,-2)circle (5pt);
\fill(19,2)circle (5pt);
\fill(15,2)circle (5pt);

\node[below] at (14.5,-2) {$e_{1}$};
\node[below] at (19.5,-2) {$e_{2}$};
\node[above] at (19.5,2) {$e_{3}$};
\node[above] at (14.5,2) {$e_{4}$};

\draw[dashed](15,-2)to[out=15,in=165](19,-2);
\draw[thick](15,-2)to[out=-15,in=-165](19,-2);
\draw[thick](15,2)to[out=15,in=165](19,2);
\draw[dashed](15,2)to[out=-15,in=-165](19,2);
\draw[thick](19,-2)to[out=75,in=-75](19,2);
\draw[dashed](19,-2)to[out=105,in=-105](19,2);
\draw[thick](15,-2)to[out=105,in=-105](15,2);
\draw[dashed](15,-2)to[out=75,in=-75](15,2);
\draw[thick](15,-2)to[out=60,in=-150](19,2);
\draw[dashed](15,-2)to[out=30,in=-105](19,2);
\draw[thick](15,2)to[out=-30,in=120](19,-2);
\draw[dashed](15,2)to[out=-60,in=150](19,-2);

\node[above] at (20.5,-0.5) {$=$};


\fill(22,-2)circle (5pt);
\fill(26,-2)circle (5pt);
\fill(26,2)circle (5pt);
\fill(22,2)circle (5pt);

\node[below] at (21.5,-2) {$e_{1}$};
\node[below] at (26.5,-2) {$e_{2}$};
\node[above] at (26.5,2) {$e_{3}$};
\node[above] at (21.5,2) {$e_{4}$};

\draw[thick](21.5,-2.5) circle [x radius=0.7cm, y radius=0.7cm, rotate=30];
\draw[thick](21.5,2.5) circle [x radius=0.7cm, y radius=0.7cm, rotate=30];
\draw[thick](26.5,-2.5) circle [x radius=0.7cm, y radius=0.7cm, rotate=30];
\draw[thick](26.5,2.5) circle [x radius=0.7cm, y radius=0.7cm, rotate=30];

\end{scope}
\end{tikzpicture}
\caption{The weighted signed graph corresponding to $B_{4}$ and $D_{4}$. This is $n$-independent. }
\label{example_typeB_Case3}
\end{figure}
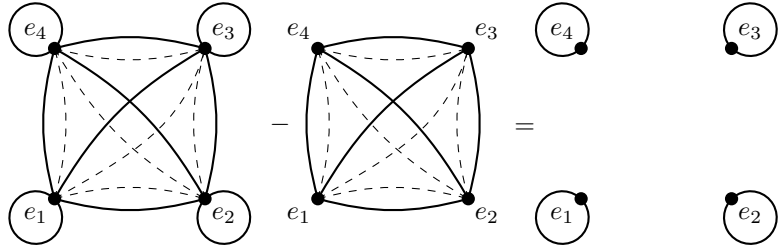

This shows Theorem~\ref{main} (1) Type $B_{n}$.

\subsection{Case 3: $(G,H)=(C_{n},A_{n-1}\times T^{1})$}
\label{sect:6.3}
 
This case can be computed by removing the edges of the weighted signed graph $K_{n}$ from $K_{n}^{\circ}$ with $w_{V}(i)=2e_{i}$.
The obtained graph has a subgraph of two loops connected by a dotted edge (see Figure~\ref{example_typeC_Case1} for the case when $n=4$).
This is $2$-independent because there is a subgraph in  Figure~\ref{example_weighted_vertex}.
Therefore, the set of vectors $\Delta_{G,H}/\{\pm 1\}$ is $2$-independent.

\begin{figure}[H]
\begin{tikzpicture}
\begin{scope}[xscale=0.5, yscale=0.5]

\fill(8,-2)circle (5pt);
\fill(12,-2)circle (5pt);
\fill(12,2)circle (5pt);
\fill(8,2)circle (5pt);

\node[below] at (7.5,-2) {$2e_{1}$};
\node[below] at (12.5,-2) {$2e_{2}$};
\node[above] at (12.5,2) {$2e_{3}$};
\node[above] at (7.5,2) {$2e_{4}$};

\draw[dashed](8,-2)to[out=15,in=165](12,-2);
\draw[thick](8,-2)to[out=-15,in=-165](12,-2);
\draw[thick](8,2)to[out=15,in=165](12,2);
\draw[dashed](8,2)to[out=-15,in=-165](12,2);
\draw[thick](12,-2)to[out=75,in=-75](12,2);
\draw[dashed](12,-2)to[out=105,in=-105](12,2);
\draw[thick](8,-2)to[out=105,in=-105](8,2);
\draw[dashed](8,-2)to[out=75,in=-75](8,2);
\draw[thick](8,-2)to[out=60,in=-150](12,2);
\draw[dashed](8,-2)to[out=30,in=-105](12,2);
\draw[thick](8,2)to[out=-30,in=120](12,-2);
\draw[dashed](8,2)to[out=-60,in=150](12,-2);

\draw[thick](7.5,-2.5) circle [x radius=0.7cm, y radius=0.7cm, rotate=30];
\draw[thick](7.5,2.5) circle [x radius=0.7cm, y radius=0.7cm, rotate=30];
\draw[thick](12.5,-2.5) circle [x radius=0.7cm, y radius=0.7cm, rotate=30];
\draw[thick](12.5,2.5) circle [x radius=0.7cm, y radius=0.7cm, rotate=30];

\node[above] at (14,-0.5) {$-$};


\fill(15,-2)circle (5pt);
\fill(19,-2)circle (5pt);
\fill(19,2)circle (5pt);
\fill(15,2)circle (5pt);

\node[below] at (14.5,-2) {$e_{1}$};
\node[below] at (19.5,-2) {$e_{2}$};
\node[above] at (19.5,2) {$e_{3}$};
\node[above] at (14.5,2) {$e_{4}$};

\draw[thick](15,-2)to[out=-15,in=-165](19,-2);
\draw[thick](15,2)to[out=15,in=165](19,2);
\draw[thick](19,-2)to[out=75,in=-75](19,2);
\draw[thick](15,-2)to[out=105,in=-105](15,2);
\draw[thick](15,-2)to[out=60,in=-150](19,2);
\draw[thick](15,2)to[out=-30,in=120](19,-2);

\node[above] at (20.5,-0.5) {$=$};


\fill(22,-2)circle (5pt);
\fill(26,-2)circle (5pt);
\fill(26,2)circle (5pt);
\fill(22,2)circle (5pt);

\node[below] at (21.5,-2) {$2e_{1}$};
\node[below] at (26.5,-2) {$2e_{2}$};
\node[above] at (26.5,2) {$2e_{3}$};
\node[above] at (21.5,2) {$2e_{4}$};

\draw[dashed](22,-2)to[out=15,in=165](26,-2);
\draw[dashed](22,2)to[out=-15,in=-165](26,2);
\draw[dashed](26,-2)to[out=105,in=-105](26,2);
\draw[dashed](22,-2)to[out=75,in=-75](22,2);
\draw[dashed](22,-2)to[out=30,in=-105](26,2);
\draw[dashed](22,2)to[out=-60,in=150](26,-2);

\draw[thick](21.5,-2.5) circle [x radius=0.7cm, y radius=0.7cm, rotate=30];
\draw[thick](21.5,2.5) circle [x radius=0.7cm, y radius=0.7cm, rotate=30];
\draw[thick](26.5,-2.5) circle [x radius=0.7cm, y radius=0.7cm, rotate=30];
\draw[thick](26.5,2.5) circle [x radius=0.7cm, y radius=0.7cm, rotate=30];

\end{scope}
\end{tikzpicture}
\caption{The weighted signed graph corresponding to $C_{4}$ and $A_{3}\times T^{1}$. This is $2$-independent. }
\label{example_typeC_Case1}
\end{figure}
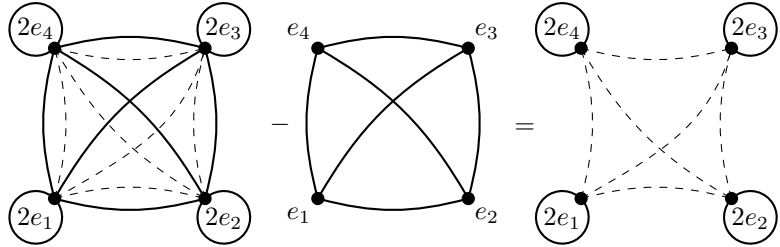

This proves that $(C_{n},A_{n-1}\times T^{1})$ is one of the cases in Theorem~\ref{main} (3).

\subsection{Case 4: $(G,H)=(C_{n},C_{i}\times C_{n-i})$ $(1\le i<n)$}
\label{sect:6.4}
This case can be computed by removing the edges of the weighted signed graph $K_{i}^{\circ}\sqcup K_{n-i}^{\circ}$ from $K_{n}^{\circ}$.
The obtained graph combines the bipartite graph and the dashed bipartite graph.
Then, it does not have a $3$-cycle but it has the $4$-cycle which appears in Figure~\ref{example_weighted_vertex2} (see Figure~\ref{example_typeC_Case2} for the case when $n=6$ and $i=4$).
Therefore, the set of vectors $\Delta_{G,H}/\{\pm 1\}$ is $3$-independent.

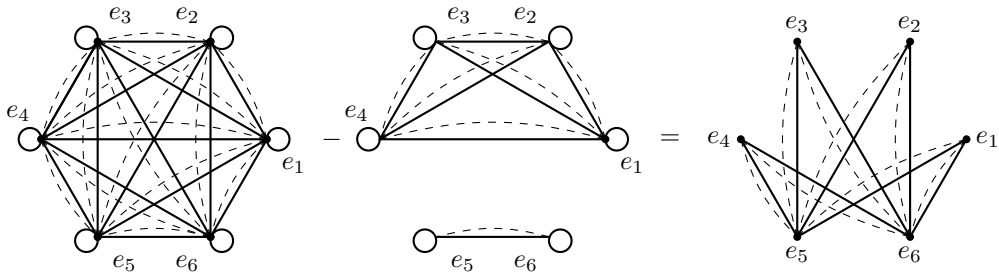
\begin{figure}[H]
\begin{tikzpicture}
\begin{scope}[xscale=0.3, yscale=0.3]

\coordinate (A) at (5,0);
\coordinate (B) at ({5*cos(2*pi/6 r)},{5*sin(2*pi/6 r)});
\coordinate (C) at ({5*cos(4*pi/6 r)},{5*sin(4*pi/6 r)});
\coordinate (D) at ({5*cos(6*pi/6 r)},{5*sin(6*pi/6 r)});
\coordinate (E) at ({5*cos(8*pi/6 r)},{5*sin(8*pi/6 r)});
\coordinate (F) at ({5*cos(10*pi/6 r)},{5*sin(10*pi/6 r)});

\draw[thick](5.5,0) circle (0.5cm);
\draw[thick](3,4.5) circle [x radius=0.5cm, y radius=0.5cm, rotate=30];
\draw[thick](-3,4.5) circle [x radius=0.5cm, y radius=0.5cm, rotate=30];
\draw[thick](-5.5,0) circle (0.5cm);
\draw[thick](-3,-4.5) circle [x radius=0.5cm, y radius=0.5cm, rotate=30];
\draw[thick](3,-4.5) circle [x radius=0.5cm, y radius=0.5cm, rotate=30];

\fill(A) circle (5pt);

\node[label={300:$e_{1}$}] at (A) {};
\fill(B)circle (5pt);
\node[label={100:$e_{2}$}] at (B) {};
\fill(C)circle (5pt);
\node[label={80:$e_{3}$}] at (C) {};
\fill(D)circle (5pt);
\node[label={100:$e_{4}$}] at (D) {};
\fill(E)circle (5pt);
\node[label={300:$e_{5}$}] at (E) {};
\fill(F)circle (5pt);
\node[label={260:$e_{6}$}] at (F) {};

\draw[thick] (A)--(B)--(C)--(D)--(E)--(F)--(A);
\draw[thick] (A)--(C);
\draw[thick] (A)--(D);
\draw[thick] (A)--(E);
\draw[thick] (B)--(D);
\draw[thick] (B)--(E);
\draw[thick] (B)--(F);
\draw[thick] (C)--(D);
\draw[thick] (C)--(E);
\draw[thick] (C)--(F);
\draw[thick] (D)--(F);

\draw[dashed](A)to[out=105,in=-45](B); 
\draw[dashed](A)to[out=135,in=-15](C); 
\draw[dashed](A)to[out=165,in=15](D); 
\draw[dashed](A)to[out=195,in=45](E);
\draw[dashed](A)to[out=225,in=75](F);
\draw[dashed](B)to[out=165,in=15](C);
\draw[dashed](B)to[out=195,in=45](D);
\draw[dashed](B)to[out=225,in=75](E);
\draw[dashed](B)to[out=255,in=105](F);
\draw[dashed](C)to[out=225,in=75](D);
\draw[dashed](C)to[out=255,in=105](E);
\draw[dashed](C)to[out=285,in=135](F);
\draw[dashed](D)to[out=285,in=135](E);
\draw[dashed](D)to[out=315,in=165](F);
\draw[dashed](F)to[out=165,in=15](E); 

\node[right] at (7,0) {$-$};

\coordinate (a) at (20,0);
\coordinate (b) at ({5*cos(2*pi/6 r)+15},{5*sin(2*pi/6 r)});
\coordinate (c) at ({5*cos(4*pi/6 r)+15},{5*sin(4*pi/6 r)});
\coordinate (d) at ({5*cos(6*pi/6 r)+15},{5*sin(6*pi/6 r)});
\coordinate (e) at ({5*cos(8*pi/6 r)+15},{5*sin(8*pi/6 r)});
\coordinate (f) at ({5*cos(10*pi/6 r)+15},{5*sin(10*pi/6 r)});

\draw[thick](20.5,0) circle (0.5cm);
\draw[thick](12,4.5) circle [x radius=0.5cm, y radius=0.5cm, rotate=30];
\draw[thick](18,4.5) circle [x radius=0.5cm, y radius=0.5cm, rotate=30];
\draw[thick](9.5,0) circle (0.5cm);
\draw[thick](12,-4.5) circle [x radius=0.5cm, y radius=0.5cm, rotate=30];
\draw[thick](18,-4.5) circle [x radius=0.5cm, y radius=0.5cm, rotate=30];

\fill(a) circle (5pt);
\node[label={300:$e_{1}$}] at (a) {};
\fill(B)circle (5pt);
\node[label={100:$e_{2}$}] at (b) {};
\fill(C)circle (5pt);
\node[label={80:$e_{3}$}] at (c) {};
\fill(D)circle (5pt);
\node[label={100:$e_{4}$}] at (d) {};
\fill(E)circle (5pt);
\node[label={300:$e_{5}$}] at (e) {};
\fill(F)circle (5pt);
\node[label={260:$e_{6}$}] at (f) {};

\draw[thick] (a)--(b)--(c)--(d)--(a);
\draw[thick] (a)--(c);
\draw[thick] (b)--(d);
\draw[thick] (e)--(f);

\draw[dashed](a)to[out=105,in=-45](b); 
\draw[dashed](a)to[out=135,in=-15](c); 
\draw[dashed](a)to[out=165,in=15](d); 
\draw[dashed](b)to[out=165,in=15](c);
\draw[dashed](b)to[out=195,in=45](d);
\draw[dashed](c)to[out=225,in=75](d);

\draw[dashed](f)to[out=165,in=15](e); 

\node[right] at (22,0) {$=$};

\coordinate (1) at (36,0);
\coordinate (2) at ({5*cos(2*pi/6 r)+31},{5*sin(2*pi/6 r)});
\coordinate (3) at ({5*cos(4*pi/6 r)+31},{5*sin(4*pi/6 r)});
\coordinate (4) at ({5*cos(6*pi/6 r)+31},{5*sin(6*pi/6 r)});
\coordinate (5) at ({5*cos(8*pi/6 r)+31},{5*sin(8*pi/6 r)});
\coordinate (6) at ({5*cos(10*pi/6 r)+31},{5*sin(10*pi/6 r)});

\fill(1) circle (5pt);
\node[right] at (1) {$e_{1}$};
\fill(2)circle (5pt);
\node[above] at (2) {$e_{2}$};
\fill(3)circle (5pt);
\node[above] at (3) {$e_{3}$};
\fill(4)circle (5pt);
\node[left] at (4) {$e_{4}$};
\fill(5)circle (5pt);
\node[below] at (5) {$e_{5}$};
\fill(6)circle (5pt);
\node[below] at (6) {$e_{6}$};

\draw[thick] (5)--(1);
\draw[thick] (5)--(2);
\draw[thick] (5)--(3);
\draw[thick] (5)--(4);
\draw[thick] (6)--(1);
\draw[thick] (6)--(2);
\draw[thick] (6)--(3);
\draw[thick] (6)--(4);

\draw[dashed](1)to[out=195,in=45](5);
\draw[dashed](1)to[out=225,in=75](6);
\draw[dashed](2)to[out=225,in=75](5);
\draw[dashed](2)to[out=255,in=105](6);
\draw[dashed](3)to[out=255,in=105](5);
\draw[dashed](3)to[out=285,in=135](6);
\draw[dashed](4)to[out=285,in=135](5);
\draw[dashed](4)to[out=315,in=165](6);

\end{scope}
\end{tikzpicture}
\caption{The weighted signed graph corresponding to $C_{6}$ and $C_{4}\times C_{2}$. The obtained graph does not have a subgraph which appeared in Figure~\ref{example_weighted_vertex} but have a subgraph in Figure~\ref{example_weighted_vertex2}.
Therefore, this is $3$-independent. }
\label{example_typeC_Case2}
\end{figure}

This proves that $(C_{n},C_{i}\times C_{n-i})$ is one of the cases in Theorem~\ref{main} (2).

\section{Type $D$}
\label{sect:7}

In this section, we compute $k(G/H)$ for the case when $G$ is of type $D_{n}$ for $n\ge 4$.
In this case, the corresponding weighted signed graph of $G$ is $\pm K_{n}$.
We remark the identifications $D_{3}=A_{3}$, $D_{2}=A_{1}\times A_{1}$ and $D_{1}=T^{1}$.
This implies that the weighted signed graph $\pm K_{3}$ (resp.~$\pm K_{2}$) can be identified with the weighted signed graph $K_{4}$ (resp.~$K_{2}\sqcup K_{2}$) because their weights on edges are the same.

\subsection{Case 1: $H=D_{i}\times D_{n-i}$ $(1\le i\le n-1)$} 
\label{sect:7.1}

This case can be computed by removing the edges of the weighted signed graph $\pm K_{i}\sqcup \pm K_{n-i}$ from $\pm K_{n}$, where $\pm K_{1}=\emptyset$.
The obtained graph does not have a subgraph which appeared in Figure~\ref{example_weighted_vertex} but has a subgraph in Figure~\ref{example_weighted_vertex2} (see 
Figure~\ref{example_typeD_Case1} for $(n,i)=(4,1)$ and Figure~\ref{example_typeD_Case2} for $(n,i)=(6,2)$).
Therefore, the set of vectors $\Delta_{G,H}/\{\pm 1\}$ is $3$-independent.

\begin{figure}[H]
\begin{tikzpicture}
\begin{scope}[xscale=0.3, yscale=0.3]

\coordinate (A) at (5,0);
\coordinate (B) at ({5*cos(2*pi/4 r)},{5*sin(2*pi/4 r)});
\coordinate (C) at ({5*cos(4*pi/4 r)},{5*sin(4*pi/4 r)});
\coordinate (D) at ({5*cos(6*pi/4 r)},{5*sin(6*pi/4 r)});

\fill(A) circle (5pt);
\node[right] at (A) {$e_{1}$};
\fill(B)circle (5pt);
\node[above] at (B) {$e_{2}$};
\fill(C)circle (5pt);
\node[label={90:$e_{3}$}] at (C) {};
\fill(D)circle (5pt);
\node[left] at (D) {$e_{4}$};

\draw[dashed](A)to[out=120,in=-30](B); 
\draw[thick](A)to[out=150,in=-70](B); 
\draw[dashed](A)to[out=165,in=15](C); 
\draw[thick](A)to[out=195,in=-25](C); 
\draw[dashed](A)to[out=210,in=60](D);
\draw[thick](A)to[out=240,in=20](D); 

\draw[thick](B)to[out=260,in=30](C);
\draw[dashed](B)to[out=210,in=60](C);
\draw[dashed](B)to[out=255,in=105](D);
\draw[thick](B)to[out=285,in=75](D);
\draw[dashed](C)to[out=-30,in=105](D);
\draw[thick](C)to[out=-60,in=150](D);

\node[right] at (7,0) {$-$};

\coordinate (a) at (20,0);
\coordinate (b) at ({5*cos(2*pi/4 r)+15},{5*sin(2*pi/4 r)});
\coordinate (c) at ({5*cos(4*pi/4 r)+15},{5*sin(4*pi/4 r)});
\coordinate (d) at ({5*cos(6*pi/4 r)+15},{5*sin(6*pi/4 r)});

\fill(a) circle (5pt);
\node[right] at (a) {$e_{1}$};
\fill(b)circle (5pt);
\node[above] at (b) {$e_{2}$};
\fill(c)circle (5pt);
\node[label={90:$e_{3}$}] at (c) {};
\fill(d)circle (5pt);
\node[left] at (d) {$e_{4}$};

\draw[dashed](a)to[out=120,in=-30](b); 
\draw[thick](a)to[out=150,in=-70](b); 
\draw[dashed](a)to[out=165,in=15](c); 
\draw[thick](a)to[out=195,in=-25](c); 
\draw[thick](b)to[out=260,in=30](c);
\draw[dashed](b)to[out=210,in=60](c);

\node[right] at (22,0) {$=$};

\coordinate (1) at (35,0);
\coordinate (2) at ({5*cos(2*pi/4 r)+30},{5*sin(2*pi/4 r)});
\coordinate (3) at ({5*cos(4*pi/4 r)+30},{5*sin(4*pi/4 r)});
\coordinate (4) at ({5*cos(6*pi/4 r)+30},{5*sin(6*pi/4 r)});

\fill(1) circle (5pt);
\node[right] at (1) {$e_{1}$};
\fill(2)circle (5pt);
\node[above] at (2) {$e_{2}$};
\fill(3)circle (5pt);
\node[above] at (3) {$e_{3}$};
\fill(4)circle (5pt);
\node[left] at (4) {$e_{4}$};

\draw[dashed](1)to[out=210,in=60](4);
\draw[thick](1)to[out=240,in=20](4); 
\draw[dashed](2)to[out=255,in=105](4);
\draw[thick](2)to[out=285,in=75](4);
\draw[dashed](3)to[out=-30,in=105](4);
\draw[thick](3)to[out=-60,in=150](4);

\end{scope}
\end{tikzpicture}
\caption{The weighted signed graph corresponding to $D_{4}$ and $D_{3}\times T^1(=A_{3}\times T^1)$. This is $3$-independent (by Figure~\ref{example_weighted_vertex2}). }
\label{example_typeD_Case1}
\end{figure}
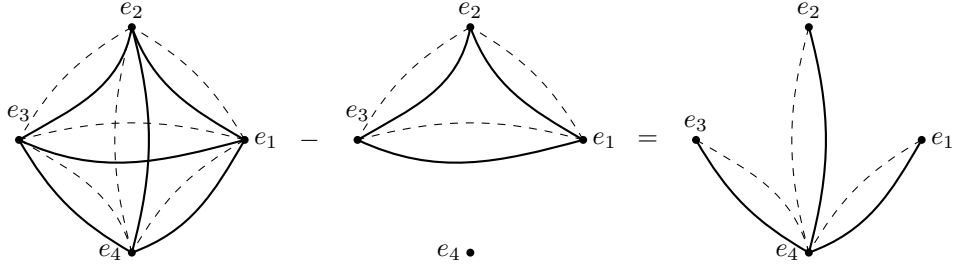

\begin{figure}[H]
\begin{tikzpicture}
\begin{scope}[xscale=0.3, yscale=0.3]

\coordinate (A) at (5,0);
\coordinate (B) at ({5*cos(2*pi/6 r)},{5*sin(2*pi/6 r)});
\coordinate (C) at ({5*cos(4*pi/6 r)},{5*sin(4*pi/6 r)});
\coordinate (D) at ({5*cos(6*pi/6 r)},{5*sin(6*pi/6 r)});
\coordinate (E) at ({5*cos(8*pi/6 r)},{5*sin(8*pi/6 r)});
\coordinate (F) at ({5*cos(10*pi/6 r)},{5*sin(10*pi/6 r)});

\fill(A) circle (5pt);
\node[right] at (A) {$e_{1}$};
\fill(B)circle (5pt);
\node[above] at (B) {$e_{2}$};
\fill(C)circle (5pt);
\node[above] at (C) {$e_{3}$};
\fill(D)circle (5pt);
\node[left] at (D) {$e_{4}$};
\fill(E)circle (5pt);
\node[below] at (E) {$e_{5}$};
\fill(F)circle (5pt);
\node[below] at (F) {$e_{6}$};

\draw[thick] (A)--(B)--(C)--(D)--(E)--(F)--(A);
\draw[thick] (A)--(C);
\draw[thick] (A)--(D);
\draw[thick] (A)--(E);
\draw[thick] (B)--(D);
\draw[thick] (B)--(E);
\draw[thick] (B)--(F);
\draw[thick] (C)--(D);
\draw[thick] (C)--(E);
\draw[thick] (C)--(F);
\draw[thick] (D)--(F);

\draw[dashed](A)to[out=105,in=-45](B); 
\draw[dashed](A)to[out=135,in=-15](C); 
\draw[dashed](A)to[out=165,in=15](D); 
\draw[dashed](A)to[out=195,in=45](E);
\draw[dashed](A)to[out=225,in=75](F);
\draw[dashed](B)to[out=165,in=15](C);
\draw[dashed](B)to[out=195,in=45](D);
\draw[dashed](B)to[out=225,in=75](E);
\draw[dashed](B)to[out=255,in=105](F);
\draw[dashed](C)to[out=225,in=75](D);
\draw[dashed](C)to[out=255,in=105](E);
\draw[dashed](C)to[out=285,in=135](F);
\draw[dashed](D)to[out=285,in=135](E);
\draw[dashed](D)to[out=315,in=165](F);
\draw[dashed](F)to[out=165,in=15](E); 

\node[right] at (7,0) {$-$};

\coordinate (a) at (20,0);
\coordinate (b) at ({5*cos(2*pi/6 r)+15},{5*sin(2*pi/6 r)});
\coordinate (c) at ({5*cos(4*pi/6 r)+15},{5*sin(4*pi/6 r)});
\coordinate (d) at ({5*cos(6*pi/6 r)+15},{5*sin(6*pi/6 r)});
\coordinate (e) at ({5*cos(8*pi/6 r)+15},{5*sin(8*pi/6 r)});
\coordinate (f) at ({5*cos(10*pi/6 r)+15},{5*sin(10*pi/6 r)});

\fill(a) circle (5pt);
\node[right] at (a) {$e_{1}$};
\fill(b)circle (5pt);
\node[above] at (b) {$e_{2}$};
\fill(c)circle (5pt);
\node[above] at (c) {$e_{3}$};
\fill(d)circle (5pt);
\node[left] at (d) {$e_{4}$};
\fill(e)circle (5pt);
\node[below] at (e) {$e_{5}$};
\fill(f)circle (5pt);
\node[below] at (f) {$e_{6}$};

\draw[thick] (a)--(b)--(c)--(d)--(a);
\draw[thick] (a)--(c);
\draw[thick] (b)--(d);
\draw[thick] (e)--(f);

\draw[dashed](a)to[out=105,in=-45](b); 
\draw[dashed](a)to[out=135,in=-15](c); 
\draw[dashed](a)to[out=165,in=15](d); 
\draw[dashed](b)to[out=165,in=15](c);
\draw[dashed](b)to[out=195,in=45](d);
\draw[dashed](c)to[out=225,in=75](d);

\draw[dashed](f)to[out=165,in=15](e); 

\node[right] at (22,0) {$=$};

\coordinate (1) at (36,0);
\coordinate (2) at ({5*cos(2*pi/6 r)+31},{5*sin(2*pi/6 r)});
\coordinate (3) at ({5*cos(4*pi/6 r)+31},{5*sin(4*pi/6 r)});
\coordinate (4) at ({5*cos(6*pi/6 r)+31},{5*sin(6*pi/6 r)});
\coordinate (5) at ({5*cos(8*pi/6 r)+31},{5*sin(8*pi/6 r)});
\coordinate (6) at ({5*cos(10*pi/6 r)+31},{5*sin(10*pi/6 r)});

\fill(1) circle (5pt);
\node[right] at (1) {$e_{1}$};
\fill(2)circle (5pt);
\node[above] at (2) {$e_{2}$};
\fill(3)circle (5pt);
\node[above] at (3) {$e_{3}$};
\fill(4)circle (5pt);
\node[left] at (4) {$e_{4}$};
\fill(5)circle (5pt);
\node[below] at (5) {$e_{5}$};
\fill(6)circle (5pt);
\node[below] at (6) {$e_{6}$};

\draw[thick] (5)--(1);
\draw[thick] (5)--(2);
\draw[thick] (5)--(3);
\draw[thick] (5)--(4);
\draw[thick] (6)--(1);
\draw[thick] (6)--(2);
\draw[thick] (6)--(3);
\draw[thick] (6)--(4);

\draw[dashed](1)to[out=195,in=45](5);
\draw[dashed](1)to[out=225,in=75](6);
\draw[dashed](2)to[out=225,in=75](5);
\draw[dashed](2)to[out=255,in=105](6);
\draw[dashed](3)to[out=255,in=105](5);
\draw[dashed](3)to[out=285,in=135](6);
\draw[dashed](4)to[out=285,in=135](5);
\draw[dashed](4)to[out=315,in=165](6);

\end{scope}
\end{tikzpicture}
\caption{The weighted signed graph corresponding to $D_{6}$ and $D_{2}\times D_{4}$. The obtained graph does not have a subgraph which appeared in Figure~\ref{example_weighted_vertex} but have a subgraph in Figure~\ref{example_weighted_vertex2}.
Therefore, this is $3$-independent. }
\label{example_typeD_Case2}
\end{figure}
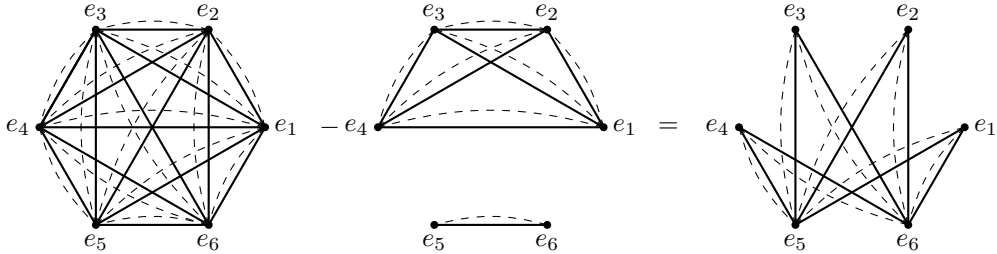

This proves that $(D_{n},D_{i}\times D_{n-i})$ is one of the cases in Theorem~\ref{main} (2).

\subsection{Case 2: $H=A_{n-1}\times T^{1}$} 
\label{sect:7.2}
This case can be computed by removing the edges of the weighted signed graph $K_{n}$ from $\pm K_{n}$.
The obtained graph is the complete graph with dashed edges.
This is $3$-independent because there is not a subgraph in Figure~\ref{example_weighted_vertex} but a subgraph in Figure~\ref{example_weighted_vertex2}.
(see Figure~\ref{example_typeD_Case3} for the case when $n=4$).
Therefore, the set of vectors $\Delta_{G,H}/\{\pm 1\}$ is $3$-independent.

\begin{figure}[H]
\begin{tikzpicture}
\begin{scope}[xscale=0.5, yscale=0.5]

\fill(8,-2)circle (5pt);
\node[left] at (8,-2) {$e_{1}$};
\fill(12,-2)circle (5pt);
\node[right] at (12,-2) {$e_{2}$};
\fill(8,2)circle (5pt);
\node[left] at (8,2) {$e_{4}$};
\fill(12,2)circle (5pt);
\node[right] at (12,2) {$e_{3}$};

\draw[dashed](8,-2)to[out=15,in=165](12,-2);
\draw[thick](8,-2)to[out=-15,in=-165](12,-2);
\draw[thick](8,2)to[out=15,in=165](12,2);
\draw[dashed](8,2)to[out=-15,in=-165](12,2);
\draw[thick](12,-2)to[out=75,in=-75](12,2);
\draw[dashed](12,-2)to[out=105,in=-105](12,2);
\draw[thick](8,-2)to[out=105,in=-105](8,2);
\draw[dashed](8,-2)to[out=75,in=-75](8,2);
\draw[thick](8,-2)to[out=60,in=-150](12,2);
\draw[dashed](8,-2)to[out=30,in=-105](12,2);
\draw[thick](8,2)to[out=-30,in=120](12,-2);
\draw[dashed](8,2)to[out=-60,in=150](12,-2);

\node[right] at (13,0) {$-$};

\fill(15,-2)circle (5pt);
\node[left] at (15,-2) {$e_{1}$};
\fill(19,-2)circle (5pt);
\node[right] at (19,-2) {$e_{2}$};
\fill(15,2)circle (5pt);
\node[left] at (15,2) {$e_{4}$};
\fill(19,2)circle (5pt);
\node[right] at (19,2) {$e_{3}$};

\draw[thick](15,-2)to[out=-15,in=-165](19,-2);
\draw[thick](15,2)to[out=15,in=165](19,2);
\draw[thick](19,-2)to[out=75,in=-75](19,2);
\draw[thick](15,-2)to[out=105,in=-105](15,2);
\draw[thick](15,-2)to[out=60,in=-150](19,2);
\draw[thick](15,2)to[out=-30,in=120](19,-2);

\node[right] at (20,0) {$=$};

\fill(22,-2)circle (5pt);
\node[left] at (22,-2) {$e_{1}$};
\fill(26,-2)circle (5pt);
\node[right] at (26,-2) {$e_{2}$};
\fill(22,2)circle (5pt);
\node[left] at (22,2) {$e_{4}$};
\fill(26,2)circle (5pt);
\node[right] at (26,2) {$e_{3}$};

\draw[dashed](22,-2)to[out=15,in=165](26,-2);
\draw[dashed](22,2)to[out=-15,in=-165](26,2);
\draw[dashed](26,-2)to[out=105,in=-105](26,2);
\draw[dashed](22,-2)to[out=75,in=-75](22,2);
\draw[dashed](22,-2)to[out=30,in=-105](26,2);
\draw[dashed](22,2)to[out=-60,in=150](26,-2);

\end{scope}
\end{tikzpicture}
\caption{The weighted signed graph corresponding to $D_{4}$ and $A_{3}\times T^{1}(=D_{3}\times T^{1})$. This is $3$-independent. }
\label{example_typeD_Case3}
\end{figure}
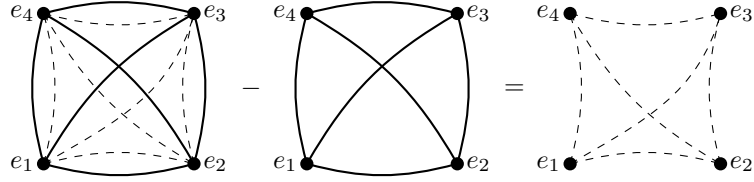

\begin{remark}
Note that if $n=4$, then we may regard 
the weighted signed graph of $H(=A_{3}\times T^1)$ as $K_{4}$ (see Figure~\ref{example_typeD_Case3}).
On the other hand, there is the identification $A_{3}=D_{3}$. 
In this case, we may also regard the weighted signed graph of $H(=D_{3}\times T^1)$ as (see Figure~\ref{example_typeD_Case1}).
Namely, 
if we regard $H=D_{3}\times T^1$, this case also can be computed from Case 1.
\end{remark}

This proves that $(D_{n},A_{n-1}\times T^{1})$ is one of the cases in Theorem~\ref{main} (2).

\section{Type $F$}
\label{sect:8}

We compute $k(G/H)$ for the case when $G$ is of type $F_{4}$.
We provide the reader with the explicit description of positive roots of $F_{4}$ in Table~\ref{fig:f4} following~\cite[pp. 58--59]{Y25}).
Following \cite{Y25}, we denote each vector in Table~\ref{fig:f4} as $(n_{1}n_{2}n_{3}n_{4})$ or $\sum_{i=1}^{4}n_{i}\alpha_{i}$.
\begin{table}[H]
\centering
\caption{Positive $24$ roots of $F_{4}$ system}\label{fig:f4}
\begin{subtable}{0.4\textwidth}
\centering
\begin{tabular}{llll}
1 & 1 & 1 & 0 \\
0 & 1 & 1 & 0 \\
0 & 0 & 1 & 0 \\
1 & 2 & 3 & 2 \\
1 & 0 & 0 & 0 \\
1 & 1 & 0 & 0 \\
0 & 1 & 2 & 2 \\
0 & 1 & 0 & 0 \\
1 & 1 & 2 & 2 \\
1 & 2 & 2 & 2 \\
1 & 2 & 2 & 0 \\
1 & 1 & 2 & 0
\end{tabular}
\end{subtable}
\begin{subtable}{0.4\textwidth}
\centering
\begin{tabular}{lllll}
          & 2 & 3 & 4 & 2 \\
          & 0 & 1 & 2 & 0 \\
          & 1 & 3 & 4 & 2 \\
          & 1 & 2 & 4 & 2 \\
$x_{1}:$ & 1 & 2 & 3 & 1 \\
$x_{2}:$ & 0 & 0 & 0 & 1 \\
$x_{3}:$ & 1 & 2 & 2 & 1 \\
$x_{4}:$ & 1 & 1 & 2 & 1 \\
$x_{5}:$ & 0 & 0 & 1 & 1 \\
$x_{6}:$ & 0 & 1 & 1 & 1 \\
$x_{7}:$ & 1 & 1 & 1 & 1 \\
$x_{8}:$ & 0 & 1 & 2 & 1
\end{tabular}
\end{subtable}
\end{table}

\subsection{Case 1: $H=C_{3}\times A_{1}$}
\label{sect:8.1}

The positive root system embedding of $C_{3}\times A_{1}$ to $F_{4}$ can be obtained by removing $\alpha_{1}=(1000)$ from the extended Dynkin diagram for $F_{4}$~\cite[p.58]{Y25}. 
The respective subroot system $C_{3}\times A_{1}$ (of cardinality $10$) is the subset of the set in Table~\ref{fig:f4} given by the condition that $n_{1}$ is even.
The respective complement $\Delta_{G,H}^{+}$ is given by the set of all vectors with $n_{1}=1$ in Table~\ref{fig:f4}, 
and it has a linearly dependent triple
\[
(1120)+(1100)=2\cdot (1110).
\]
By definition of the root systems, each pair of positive roots are linearly independent.
Therefore, $k(G/H)=2$, and this proves that $(F_{4},C_{3}\times A_{1})$ is one of the cases in Theorem~\ref{main} (3).

\subsection{Case 2: $H=B_{4}$}
\label{sect:8.2}
The positive root system embedding of $B_{4}$ to $F_{4}$ 
can be obtained by removing 
$\alpha_{4}=(0001)$ from the extended Dynkin diagram for $F_{4}$~\cite[p.58]{Y25}.
The respective subroot system $B_{4}$ (of cardinality $16$) is the subset of the set in Table~\ref{fig:f4} given by the condition that $n_{4}$ is even.
The respective complement $\Delta_{G,H}^{+}$ is given by the set of all vectors with $n_{4}=1$ in Table~\ref{fig:f4}.
We denote this set by
\[
\Delta_{G,H}^{+} = \{x_1, \ldots, x_8\},
\]
where the elements are ordered from top to bottom as they appear in Table~\ref{fig:f4} with $n_{4}=1$.
It has a linearly dependent quadruple $x_{1}+x_{2}=x_{3}+x_{5}$, i.e., 
\[
(1231)+(0001)=(1221)+(0011).
\]
Therefore, $k(G/H)=2$ or $3$.

To prove $k(G/H)\not=2$, we need to check for each triple of pairwise distinct roots $\{u,v,w\}$ in $\Delta_{G,H}^{+}$ are linearly independent. 
It is well known that 
$\Delta_{F_{4},B_{4}}=\Delta_{G,H}^{+}\sqcup -\Delta_{G,H}^{+}=\{\pm x_{1},\ldots, \pm x_{8}\}$
may be identified with  
the following collection of $16$ roots in $\R^{4}$ 
(see \cite[p59]{Y25} or \cite{B}):
\begin{equation}\label{eq:halfsum}
\frac{1}{2}(a_{1},a_{2},a_{3},a_{4}),\ a_{i}\in\lb -1,1\rb.
\end{equation}
Hence, for any 
distinct positive
roots $u,v\in \Delta_{G,H}^{+}$ the scalar product takes values
\[
\la u,v\ra\in\lb -\frac{1}{2},0,\frac{1}{2}\rb.
\]
Given any triple $u_{1}, u_{2}, u_{3}\in \Delta_{G,H}^{+}$ of pairwise distinct roots, consider its respective Gram matrix
\[
G=(\la u_{i}, u_{j}\ra):=
\begin{pmatrix}
1 & a & b\\
a & 1 & c\\
b & c & 1
\end{pmatrix},\ 
a,b,c\in\lb -\frac{1}{2},0,\frac{1}{2}\rb.
\]
We compute
\[
\det G=1+2abc-a^2-b^2-c^2.
\]
Since
\[
a^2+b^2+c^2\leq \frac{3}{4},\ |2abc|\leq \frac{1}{4},
\]
one has 
\[
\det G\geq 0,
\]
with equality possible if and only if
\[
abc<0,\ |a|=|b|=|c|=\frac{1}{2}.
\]
Assume that the triple $u,v,w\in \Delta^{+}_{G,H}$ of pairwise distinct roots is linearly dependent.
Since $\det G=0$ for their Gram matrix $G$ if and only if $u,v,w$ are linearly dependent, the conditions in the last formula hold for $u,v,w$.
However, if $|\la u, v\ra|=|\la u, w\ra|=\frac{1}{2}$, then it follows from \eqref{eq:halfsum} that $\la v, w\ra=0$.
This gives a contradiction to $|\la v, w\ra|=\frac{1}{2}$.
This proves $k(G/H)\not=2$. Therefore, $(F_{4},B_{4})$ is one of the cases in Theorem~\ref{main} $(2)$.


\section{Type $E$ cases}
\label{sect:9}

Let $G=E_{6},E_{7},E_{8}$. 
In this remaining section, we prove $k(G/H)=3$ for all seven cases $(G,H)$ which appeared in Theorem~\ref{thm:cart}. We first claim the following lemma.

\begin{lemma}\label{lm:triprk2}
Let $G$ be of type $A, D, E$, i.e., the simply laced, simple Lie group.
Any linearly dependent positive root triple 
$u,v,w\in \Delta_{G}^{+}$ 
\[
u=v+w,
\]
up to permutation of the triple.
\end{lemma}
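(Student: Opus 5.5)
The approach is to reduce everything to a rank-two root subsystem and then use the classification of rank-two root systems. Let $u,v,w\in\Delta_{G}^{+}$ be pairwise distinct positive roots that are linearly dependent. Since any two distinct positive roots are linearly independent (roots proportional to a given root are only $\pm$ itself in a reduced root system), the span $V:=\mathrm{span}\{u,v,w\}$ is exactly two-dimensional. I would then consider $\Phi:=\Delta_{G}\cap V$. Intersecting a root system with a subspace gives again a root system in that subspace, since it is closed under the reflections $s_\alpha$ for $\alpha\in\Phi$, which preserve $V$. So $\Phi$ is a rank-two root system containing $u,v,w$. Because $G$ is simply laced, all roots of $\Phi$ have the same length, so $\Phi$ is simply laced of rank two. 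The possibilities are $A_{1}\times A_{1}$ and $A_{2}$; $B_{2}$ and $G_{2}$ are excluded by the length condition.

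The next step is to rule out $A_{1}\times A_{1}$. That system has only two positive roots up to sign, so it cannot contain three pairwise non-proportional roots. Hence $\Phi\cong A_{2}$, which consists of six roots $\pm\beta_{1},\pm\beta_{2},\pm(\beta_{1}+\beta_{2})$. Its three lines are spanned by $\beta_{1},\beta_{2},\beta_{1}+\beta_{2}$.

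Now use positivity. The positive roots of $G$ that lie in $V$ form a positive system $\Phi^{+}$ of $\Phi$, namely those on the positive side of a generic linear functional. Thus $\Phi^{+}=\{\gamma_{1},\gamma_{2},\gamma_{1}+\gamma_{2}\}$ for simple roots $\gamma_{1},\gamma_{2}$ of $\Phi$. Since $u,v,w$ are three distinct elements of the three-element set $\Phi^{+}$, they are exactly these three roots. After a permutation this gives $u=v+w$, as claimed.

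The main obstacle is the justification that $\Delta_{G}\cap V$ is a root system of rank two and that its positive part is a positive system with the standard shape. I would handle this either through reflection-closure, as above, or more elementarily using inner products. In a simply laced system with roots normalized to $\langle\alpha,\alpha\rangle=2$, distinct non-opposite roots satisfy $\langle\alpha,\beta\rangle\in\{-1,0,1\}$. The Gram determinant argument from Section~\ref{sect:8.2} then shows that a dependent triple must have all pairwise products $\pm1$ with an odd number of negative signs. Solving the resulting relation gives $u=\pm v\pm w$, and positivity fixes the signs.
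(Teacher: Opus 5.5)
Your proposal is correct and follows essentially the same route as the paper: restrict to the rank-two subsystem $\Delta_{G}\cap\mathrm{span}\{u,v,w\}$, invoke the classification of rank-two root systems to see that in the simply laced case it must be $A_{2}$, and read off $u=v+w$ from its three positive roots. Your explicit check that $\Delta_{G}^{+}\cap V$ is a positive system of that subsystem, via a generic linear functional, makes precise a step the paper only handles by its brief remark about the positive root cone.
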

\begin{proof}
It is well known that the span of any two (linearly independent) roots in any root system $\Delta_{G}$ is a root subsystem (of rank $2$) \cite[Section 9.4]{H72}.
After multiplication by $-1$ if necessary, without loss of generality assume that $u,v,w$ belong to a positive root cone.
There is the classification for rank $2$ root systems \cite[Section 9.4]{H72}, and the respective positive roots for a basis $\alpha,\beta$ of simple roots is given as follows:
\begin{description}
\item[$A_{1}\times A_{1}$] $\alpha$, $\beta$;
\item[$A_{2}$] $\alpha$, $\beta$, $\alpha+\beta$;
\item[$B_{2}=C_{2}$] $\alpha$, $\beta$, $\alpha+\beta$, $\alpha+2\beta$;
\item[$G_{2}$] $\alpha$, $\beta$, $\alpha+\beta$, $\alpha+2\beta$, $\alpha+3\beta$, $2\alpha+3\beta$.
\end{description}
This shows that these are all possible linearly dependendent triples for a rank $2$ root system.
Only $A_{1}\times A_{1}$, $A_{2}$ from the above list are simply laced.
Therefore, $u,v,w\in \Delta_{G}^{+}$ must be of the form in the statement.
\end{proof}


\subsection{Type $E_{6}$}
\label{sect:9.1}
Assume that $G=E_{6}$. 
We prove $k(G/H)=3$ for the cases when $H=A_{5}\times A_{1}$ and $H=D_{5}\times T^{1}$ (see Theorem~\ref{thm:cart}).
The following Table~\ref{fig:e6} is the explicit description of positive roots $n_{1}n_{2}\cdots n_{6}$ (also denote $(n_{1}n_{2}\cdots n_{6})$, $\sum_{i=1}^{6} n_{i}\alpha_{i}$) of type $E_{6}$ (see \cite[93--94]{Y25}):
\begin{table}[H]
\centering
\caption{Positive $36$ roots of $E_{6}$ system}\label{fig:e6}
\begin{subtable}{0.4\textwidth}
\centering
\begin{tabular}{llllll}
1 & 0 & 0 & 0 & 0 & 0 \\
1 & 1 & 0 & 0 & 0 & 0 \\
1 & 1 & 1 & 0 & 0 & 0 \\
0 & 1 & 0 & 0 & 0 & 0 \\
0 & 1 & 1 & 0 & 0 & 0 \\
0 & 0 & 1 & 0 & 0 & 0 \\
1 & 2 & 2 & 1 & 1 & 1 \\
1 & 1 & 2 & 1 & 1 & 1 \\
1 & 1 & 1 & 1 & 1 & 1 \\
0 & 1 & 2 & 1 & 1 & 1 \\
0 & 1 & 1 & 1 & 1 & 1 \\
0 & 0 & 1 & 1 & 1 & 1 \\
1 & 1 & 1 & 1 & 0 & 0 \\
0 & 1 & 1 & 1 & 0 & 0 \\
0 & 0 & 1 & 1 & 0 & 0 \\
0 & 0 & 0 & 1 & 0 & 0 \\
1 & 1 & 1 & 0 & 1 & 1 \\
0 & 1 & 1 & 0 & 1 & 1 \\
\end{tabular}
\end{subtable}
\centering
\begin{subtable}{0.4\textwidth}
\centering
\begin{tabular}{llllll}
0 & 0 & 1 & 0 & 1 & 1 \\
0 & 0 & 0 & 0 & 1 & 1 \\
0 & 0 & 1 & 0 & 1 & 0 \\
1 & 2 & 3 & 1 & 2 & 1 \\
0 & 1 & 2 & 1 & 2 & 1 \\
1 & 1 & 2 & 1 & 2 & 1 \\
1 & 2 & 2 & 1 & 2 & 1 \\
0 & 0 & 0 & 0 & 1 & 0 \\
1 & 1 & 1 & 0 & 1 & 0 \\
0 & 1 & 1 & 0 & 1 & 0 \\
1 & 1 & 2 & 1 & 1 & 0 \\
1 & 1 & 1 & 1 & 1 & 0 \\
0 & 0 & 0 & 0 & 0 & 1 \\
1 & 2 & 2 & 1 & 1 & 0 \\
0 & 1 & 1 & 1 & 1 & 0 \\
0 & 1 & 2 & 1 & 1 & 0 \\
1 & 2 & 3 & 2 & 2 & 1 \\
0 & 0 & 1 & 1 & 1 & 0
\end{tabular}
\end{subtable}
\end{table}
\begin{figure}[H]
\centering
\begin{tikzpicture}
\begin{scope}[xscale=0.25, yscale=0.25]
\fill(-8,1)circle (5pt);
\node[above] at (-8,1) {$1$};
\node[below] at (-8,1) {$\alpha_{1}$};
\fill(-4,1)circle (5pt);
\node[above] at (-4,1) {$2$};
\node[below] at (-4,1) {$\alpha_{2}$};
\fill(0,1)circle (5pt);
\node[above] at (0,1) {$3$};
\node[below] at (-1,1) {$\alpha_{3}$};
\fill(0,-3)circle (5pt);
\node[left] at (0,-3) {$2$};
\node[right] at (0,-3) {$\alpha_{4}$};
\fill(4,1)circle (5pt);
\node[above] at (4,1) {$2$};
\node[below] at (4,1) {$\alpha_{5}$};
\fill(8,1)circle (5pt);
\node[above] at (8,1) {$1$};
\node[below] at (8,1) {$\alpha_{6}$};
\fill(0,-7)circle (5pt);
\node[right] at (0,-7) {$-\widetilde{\alpha}$};

\draw[thick](-8,1)to(-4,1);
\draw[thick](-4,1)to(0,1);
\draw[thick](0,1)to(0,-3);
\draw[thick](0,1)to(4,1);
\draw[thick](4,1)to(8,1);

\draw[thick](0,-3)to(0,-7);

\end{scope}
\end{tikzpicture}
\caption{Extended Dynkin diagram of $E_{6}$ root system, where the dominant root $\widetilde{\alpha}=\sum_{i=1}^{6}m_{i}\alpha_{i}=(123221)$ and the integers represent $m_{i}$'s in Theorem~\ref{thm:BdS}.}\label{fig:e6dd}
\end{figure}
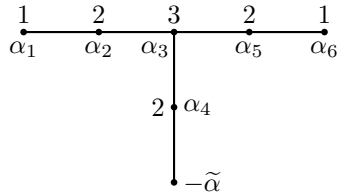

\subsubsection{Case 1: $(G,H)=(E_{6},A_{5}\times A_{1})$}
\label{sect:9.1.1}
In this case, according to Borel-deSiebenthal theory,
there are three different positive root system embeddings of $A_{5}\times A_{1}$ to $E_{6}$, corresponding to a vertex $\alpha_{i}$, $i=2,4,5$, of the extended Dynkin diagram (Figure~\ref{fig:e6dd}) with $m_{i}=2$.
Each embedding (of cardinality $16$) is given by the condition that $n_{i}$ is even in Table~\ref{fig:e6}, where $i=2,4,5$, respectively.
This implies that the vectors in the respective ($i=2,4,5$) complement satisfy $n_{i}=1$ in Table~\ref{fig:e6}.
Each case has the following linearly dependent quadruple in $\Delta^{+}_{G,H}$:
\[
(112111)-(111111)=(012111)-(011111).
\]
Therefore, $k(G/H)=2,3$.
 
To prove $k(G/H)\not=2$, it is enough to prove that each triple $u,v,w\in \Delta^{+}_{G,H}$ is linearly independent.
By Lemma~\ref{lm:triprk2},
if there is a linearly dependent triple $u,v,w\in \Delta^{+}_{G,H}$, then we may write $u=v+w$.
However, $\Delta^{+}_{G,H}$ consists of the roots with $n_{i}=1$ (for $i=2,4$ or $5$).
Therefore, for $v,w\in \Delta^{+}_{G,H}$, $u=v+w$ satisfies that $n_{i}=2$ and $u\not\in \Delta^{+}_{G,H}$.
This gives a contradiction.
Therefore, $k(G/H)=3$ and $(E_{6},A_{5}\times A_{1})$ is one of the cases in Theorem~\ref{main} (2).

\subsubsection{Case 2: $(G,H)=(E_{6},D_{5}\times T^{1})$}
\label{sect:9.1.2}
In this case, according to Borel-deSiebenthal theory,
there are two different positive root system embeddings of $D_{5}\times T^{1}$ to $E_{6}$, corresponding to a vertex $\alpha_{i}$, $i=1,6$, of the Dynkin diagram for $E_{6}$ (the diagram removing $-\widetilde{\alpha}$ from Figure~\ref{fig:e6dd}) with $m_{i}=1$.
Each embedding (of cardinality $20$) is given by the condition that $n_{i}=0$ in Table~\ref{fig:e6}, where $i=1,6$, respectively.
This implies that the vectors in the respective ($i=1,6$) complement satisfy $n_{i}=1$ in Table~\ref{fig:e6}.
Each case has the following linearly dependent quadruple in $\Delta^{+}_{G,H}$:
\[
(122111)-(112111)=(122121)-(112121).
\]
Therefore, $k(G/H)=2,3$.
By an argument similar to that used in the case $(G,H)=(E_{6},A_{5}\times A_{1})$, we also obtain $k(G/H)\not=2$.
Therefore, $k(G/H)=3$ and $(E_{6},D_{5}\times T^{1})$ is one of the cases in Theorem~\ref{main} (2).

\subsection{Type $E_{7}$}
\label{sect:9.2}

Assume that $G=E_{7}$. 
We prove $k(G/H)=3$ for the cases when $H=A_{7}$, $D_{6}\times A_{1}$ and $H=E_{6}\times T^{1}$ (see Theorem~\ref{thm:cart}).
The following Table~\ref{fig:e7} is the explicit description of positive roots $n_{1}n_{2}\cdots n_{7}$ (also denote $(n_{1}n_{2}\cdots n_{7})$, $\sum_{i=1}^{7}n_{i}\alpha_{i}$) of type $E_{7}$ (see \cite[141--144]{Y25}):
\begin{table}[H]
\centering
\caption{Positive $63$ roots of $E_{7}$ system}\label{fig:e7}
\begin{subtable}{0.3\textwidth}
\centering
\begin{tabular}{lllllll}
1 & 0 & 0 & 0 & 0 & 0 & 0 \\
1 & 1 & 0 & 0 & 0 & 0 & 0 \\
1 & 1 & 1 & 0 & 0 & 0 & 0 \\
0 & 1 & 0 & 0 & 0 & 0 & 0 \\
0 & 1 & 1 & 0 & 0 & 0 & 0 \\
0 & 0 & 1 & 0 & 0 & 0 & 0 \\
1 & 2 & 2 & 2 & 2 & 1 & 1 \\
1 & 1 & 2 & 2 & 2 & 1 & 1 \\
1 & 1 & 1 & 2 & 2 & 1 & 1 \\
0 & 1 & 2 & 2 & 2 & 1 & 1 \\
0 & 1 & 1 & 2 & 2 & 1 & 1 \\
0 & 0 & 1 & 2 & 2 & 1 & 1 \\
1 & 1 & 1 & 1 & 1 & 1 & 1 \\
0 & 1 & 1 & 1 & 1 & 1 & 1 \\
0 & 0 & 1 & 1 & 1 & 1 & 1 \\
0 & 0 & 0 & 1 & 1 & 1 & 1 \\
1 & 1 & 1 & 1 & 1 & 0 & 0 \\
0 & 1 & 1 & 1 & 1 & 0 & 0 \\
0 & 0 & 1 & 1 & 1 & 0 & 0 \\
0 & 0 & 0 & 1 & 1 & 0 & 0 \\
0 & 0 & 1 & 1 & 0 & 0 & 0
\end{tabular}
\end{subtable}
\centering
\begin{subtable}{0.3\textwidth}
\centering
\begin{tabular}{lllllll}
1 & 2 & 3 & 3 & 2 & 1 & 1 \\
0 & 1 & 2 & 3 & 2 & 1 & 1 \\
1 & 1 & 2 & 3 & 2 & 1 & 1 \\
1 & 2 & 2 & 3 & 2 & 1 & 1 \\
0 & 0 & 0 & 1 & 0 & 0 & 0 \\
1 & 1 & 1 & 1 & 0 & 0 & 0 \\
0 & 1 & 1 & 1 & 0 & 0 & 0 \\
1 & 1 & 2 & 2 & 1 & 1 & 1 \\
1 & 1 & 1 & 2 & 1 & 1 & 1 \\
0 & 0 & 0 & 0 & 1 & 0 & 0 \\
1 & 2 & 2 & 2 & 1 & 1 & 1 \\
0 & 1 & 1 & 2 & 1 & 1 & 1 \\
0 & 1 & 2 & 2 & 1 & 1 & 1 \\
1 & 2 & 3 & 4 & 3 & 2 & 2 \\
0 & 0 & 1 & 2 & 1 & 1 & 1 \\
1 & 2 & 3 & 4 & 2 & 1 & 2 \\
0 & 0 & 0 & 0 & 0 & 1 & 0 \\
0 & 0 & 0 & 0 & 0 & 0 & 1 \\
1 & 1 & 1 & 1 & 1 & 1 & 0 \\
0 & 1 & 1 & 1 & 1 & 1 & 0 \\
0 & 0 & 1 & 1 & 1 & 1 & 0
\end{tabular}
\end{subtable}
\centering
\begin{subtable}{0.3\textwidth}
\centering
\begin{tabular}{lllllll}
0 & 0 & 0 & 1 & 1 & 1 & 0 \\
1 & 1 & 1 & 1 & 1 & 0 & 1 \\
0 & 1 & 1 & 1 & 1 & 0 & 1 \\
0 & 0 & 1 & 1 & 1 & 0 & 1 \\
0 & 0 & 0 & 1 & 1 & 0 & 1 \\
0 & 0 & 1 & 1 & 0 & 0 & 1 \\
1 & 2 & 3 & 3 & 2 & 1 & 2 \\
0 & 1 & 2 & 3 & 2 & 1 & 2 \\
1 & 1 & 2 & 3 & 2 & 1 & 2 \\
1 & 2 & 2 & 3 & 2 & 1 & 2 \\
0 & 0 & 0 & 1 & 0 & 0 & 1 \\
1 & 1 & 1 & 1 & 0 & 0 & 1 \\
0 & 1 & 1 & 1 & 0 & 0 & 1 \\
1 & 1 & 2 & 2 & 1 & 0 & 1 \\
1 & 1 & 1 & 2 & 1 & 0 & 1 \\
0 & 0 & 0 & 0 & 1 & 1 & 0 \\
1 & 2 & 2 & 2 & 1 & 0 & 1 \\
0 & 1 & 1 & 2 & 1 & 0 & 1 \\
0 & 1 & 2 & 2 & 1 & 0 & 1 \\
1 & 2 & 3 & 4 & 3 & 1 & 2 \\
0 & 0 & 1 & 2 & 1 & 0 & 1
\end{tabular}
\end{subtable}
\end{table}
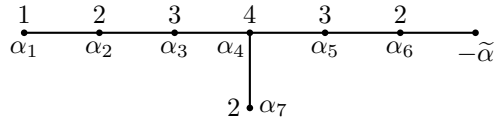
\begin{figure}[H]
\centering
\begin{tikzpicture}
\begin{scope}[xscale=0.25, yscale=0.25]
\fill(-8,1)circle (5pt);
\node[above] at (-8,1) {$1$};
\node[below] at (-8,1) {$\alpha_{1}$};
\fill(-4,1)circle (5pt);
\node[above] at (-4,1) {$2$};
\node[below] at (-4,1) {$\alpha_{2}$};
\fill(0,1)circle (5pt);
\node[above] at (0,1) {$3$};
\node[below] at (0,1) {$\alpha_{3}$};
\fill(4,1)circle (5pt);
\node[above] at (4,1) {$4$};
\node[below] at (3,1) {$\alpha_{4}$};
\fill(4,-3)circle (5pt);
\node[left] at (4,-3) {$2$};
\node[right] at (4,-3) {$\alpha_{7}$};
\fill(8,1)circle (5pt);
\node[above] at (8,1) {$3$};
\node[below] at (8,1) {$\alpha_{5}$};
\fill(12,1)circle (5pt);
\node[above] at (12,1) {$2$};
\node[below] at (12,1) {$\alpha_{6}$};
\fill(16,1)circle (5pt);
\node[below] at (16,1) {$-\widetilde{\alpha}$};

\draw[thick](-8,1)to(-4,1);
\draw[thick](-4,1)to(0,1);
\draw[thick](0,1)to(4,1);
\draw[thick](4,1)to(4,-3);
\draw[thick](4,1)to(8,1);
\draw[thick](8,1)to(12,1);
\draw[thick](12,1)to(16,1);

\end{scope}
\end{tikzpicture}
\caption{Extended Dynkin diagram of $E_{7}$ root system, where the dominant root $\widetilde{\alpha}=\sum_{i=1}^{7}m_{i}\alpha_{i}=(1234322)$ and the integers represent $m_{i}$'s in Theorem~\ref{thm:BdS}.}\label{fig:e7dd}
\end{figure}

\subsubsection{Case 1: $(G,H)=(E_{7},D_6\times A_{1})$}
\label{sect:9.2.1}
In this case, according to Borel-deSiebenthal theory,
there are two positive root system embeddings of $D_{6}\times A_{1}$ (of cardinality $31$) to $E_{7}$, corresponding to a vertex $\alpha_{i}$, $i=2,6$ of the extended Dynkin diagram (Figure~\ref{fig:e7dd}) with $m_{i}=2$.
Each embedding (of cardinality $31$) is given by the condition that $n_{i}$ is even in Table~\ref{fig:e7}, where $i=2,6$, respectively.
This implies that the vectors in the respective ($i=2,6$) complement satisfy $n_{i}=1$ in Table~\ref{fig:e7}.
Each case has the following linearly dependent quadruple in $\Delta^{+}_{G,H}$:
\[
(1122211)-(1112211)=(0122211)-(0112211).
\]
Therefore, $k(G/H)=2,3$.
By an argument similar to that used in the case $(G,H)=(E_{6},A_{5}\times A_{1})$, we also obtain $k(G/H)\not=2$.
Therefore, $k(G/H)=3$ and $(E_{7},D_{6}\times A_{1})$ is one of the cases in Theorem~\ref{main} (2).

\subsubsection{Case 2: $(G,H)=(E_{7},A_{7})$}
\label{sect:9.2.2}
In this case, according to Borel-deSiebenthal theory,
the positive root system embedding of $A_{7}$ (of cardinality $28$) to $E_{7}$ corresponds to the vertex $\alpha_{7}$ of the extended Dynkin diagram  (Figure~\ref{fig:e7dd}) with $m_{7}=2$.
This is given by the set of roots such that $n_{7}$ is even in Table~\ref{fig:e7}. 
Therefore, $\Delta_{G,H}^{+}$ consists of the roots such that $n_{7}=1$ in Table~\ref{fig:e7}.
One has the linearly dependent quadruple in $\Delta^{+}_{G,H}$ from the previous case.
By an argument similar to that used in the case $(G,H)=(E_{6},A_{5}\times A_{1})$, we also obtain $k(G/H)\not=2$.
Therefore, $k(G/H)=3$ and $(E_{7},A_{7})$ is one of the cases in Theorem~\ref{main} (2).

\subsubsection{Case 3: $(G,H)=(E_{7},E_{6}\times T^{1})$}
\label{sect:9.2.3}
In this case, according to Borel-deSiebenthal theory,
the positive root system embedding of $E_{6}\times T^{1}$ (of cardinality $36$) to $E_{7}$ corresponds to the vertex $\alpha_{1}$ of the Dynkin diagram (the diagram removing $-\widetilde{\alpha}$ from Figure~\ref{fig:e7dd}) with $m_{1}=1$.
This is given by the set of roots such that $n_{1}=0$ in Table~\ref{fig:e7}. 
Therefore, $\Delta_{G,H}^{+}$ consists of the roots such that $n_{1}=1$ in Table~\ref{fig:e7}.
Then, $\Delta^{+}_{G,H}$ has the following linearly dependent quadruple:
\[
(1122211)-(1112211)=(1122111)-(1112111).
\]
Hence, $k(G/H)=2,3$.
By an argument similar to that used in the case $(G,H)=(E_{6},A_{5}\times A_{1})$, we also obtain $k(G/H)\not=2$.
Therefore, $k(G/H)=3$ and $(E_{7},E_{6}\times T^{1})$ is one of the cases in Theorem~\ref{main} (2).

\subsection{Type $E_{8}$}
\label{sect:9.3}
Assume that $G=E_{8}$. 
We prove $k(G/H)=3$ for the cases when $H=D_{8}$ and $H=E_{7}\times A_{1}$ (see Theorem~\ref{thm:cart}).
The following Table~\ref{fig:e8} is the explicit description of positive roots $n_{1}n_{2}\cdots n_{8}$ (also denote $(n_{1}n_{2}\cdots n_{8})$, $\sum_{i=1}^{8}n_{i}\alpha_{i}$) of type $E_{8}$ (see \cite[196--202]{Y25}):
\begin{table}[h]
\centering
\caption{Positive $120$ roots of $E_{8}$ system}\label{fig:e8}
\begin{subtable}{0.3\textwidth}
\centering
\begin{tabular}{llllllll}
2 & 3 & 4 & 3 & 2 & 1 & 0 & 2 \\
2 & 3 & 4 & 3 & 2 & 1 & 1 & 2 \\
2 & 3 & 4 & 3 & 2 & 2 & 1 & 2 \\
0 & 0 & 0 & 0 & 0 & 0 & 1 & 0 \\
0 & 0 & 0 & 0 & 0 & 1 & 1 & 0 \\
0 & 0 & 0 & 0 & 0 & 1 & 0 & 0 \\
2 & 4 & 6 & 5 & 4 & 3 & 2 & 3 \\
2 & 4 & 6 & 5 & 4 & 3 & 1 & 3 \\
2 & 4 & 6 & 5 & 4 & 2 & 1 & 3 \\
0 & 1 & 2 & 2 & 2 & 2 & 1 & 1 \\
0 & 1 & 2 & 2 & 2 & 1 & 1 & 1 \\
0 & 1 & 2 & 2 & 2 & 1 & 0 & 1 \\
2 & 4 & 6 & 5 & 3 & 2 & 1 & 3 \\
0 & 1 & 2 & 2 & 1 & 1 & 1 & 1 \\
0 & 1 & 2 & 2 & 1 & 1 & 0 & 1 \\
0 & 1 & 2 & 2 & 1 & 0 & 0 & 1 \\
2 & 3 & 4 & 3 & 3 & 2 & 1 & 2 \\
0 & 0 & 0 & 0 & 1 & 1 & 1 & 0 \\
0 & 0 & 0 & 0 & 1 & 1 & 0 & 0 \\
0 & 0 & 0 & 0 & 1 & 0 & 0 & 0 \\
1 & 1 & 2 & 2 & 2 & 2 & 1 & 1 \\
1 & 1 & 2 & 2 & 2 & 1 & 1 & 1 \\
1 & 1 & 2 & 2 & 2 & 1 & 0 & 1 \\
1 & 0 & 0 & 0 & 0 & 0 & 0 & 0 \\
1 & 3 & 4 & 3 & 2 & 2 & 1 & 2 \\
1 & 3 & 4 & 3 & 2 & 1 & 1 & 2 \\
1 & 3 & 4 & 3 & 2 & 1 & 0 & 2 \\
1 & 2 & 2 & 1 & 0 & 0 & 0 & 1 \\
1 & 2 & 2 & 1 & 1 & 1 & 0 & 1 \\
1 & 2 & 2 & 1 & 1 & 0 & 0 & 1 \\
1 & 2 & 2 & 1 & 1 & 1 & 1 & 1 \\
1 & 3 & 4 & 3 & 3 & 2 & 1 & 2 \\
1 & 1 & 2 & 2 & 1 & 1 & 0 & 1 \\
1 & 1 & 2 & 2 & 1 & 0 & 0 & 1 \\
1 & 1 & 2 & 2 & 1 & 1 & 1 & 1 \\
1 & 2 & 4 & 4 & 3 & 2 & 1 & 2 \\
0 & 1 & 1 & 0 & 0 & 0 & 0 & 1 \\
0 & 0 & 1 & 1 & 0 & 0 & 0 & 1 \\
0 & 1 & 1 & 1 & 0 & 0 & 0 & 0 \\
2 & 4 & 5 & 4 & 3 & 2 & 1 & 2
\end{tabular}
\end{subtable}
\begin{subtable}{0.3\textwidth}
\centering
\begin{tabular}{llllllll}
2 & 3 & 5 & 4 & 3 & 2 & 1 & 3 \\
0 & 1 & 1 & 1 & 1 & 1 & 1 & 0 \\
0 & 0 & 1 & 1 & 1 & 1 & 1 & 1 \\
0 & 1 & 1 & 1 & 1 & 1 & 0 & 0 \\
0 & 0 & 1 & 1 & 1 & 1 & 0 & 1 \\
0 & 1 & 1 & 1 & 1 & 0 & 0 & 0 \\
0 & 0 & 1 & 1 & 1 & 0 & 0 & 1 \\
1 & 2 & 3 & 3 & 2 & 2 & 1 & 1 \\
1 & 2 & 3 & 3 & 2 & 1 & 1 & 1 \\
1 & 2 & 3 & 3 & 2 & 1 & 0 & 1 \\
1 & 1 & 1 & 1 & 0 & 0 & 0 & 0 \\
1 & 2 & 3 & 2 & 2 & 2 & 1 & 2 \\
1 & 2 & 3 & 2 & 2 & 1 & 1 & 2 \\
1 & 2 & 3 & 2 & 2 & 1 & 0 & 2 \\
1 & 1 & 1 & 0 & 0 & 0 & 0 & 1 \\
1 & 2 & 3 & 3 & 3 & 2 & 1 & 1 \\
1 & 1 & 1 & 1 & 1 & 1 & 1 & 0 \\
1 & 1 & 1 & 1 & 1 & 1 & 0 & 0 \\
1 & 1 & 1 & 1 & 1 & 0 & 0 & 0 \\
1 & 3 & 5 & 4 & 3 & 2 & 1 & 3 \\
1 & 2 & 3 & 2 & 1 & 1 & 1 & 2 \\
1 & 2 & 3 & 2 & 1 & 1 & 0 & 2 \\
1 & 2 & 3 & 2 & 1 & 0 & 0 & 2 \\
0 & 1 & 1 & 0 & 0 & 0 & 0 & 0 \\
0 & 0 & 1 & 1 & 0 & 0 & 0 & 0 \\
0 & 1 & 2 & 1 & 0 & 0 & 0 & 1 \\
0 & 1 & 0 & 0 & 0 & 0 & 0 & 0 \\
0 & 0 & 0 & 1 & 0 & 0 & 0 & 0 \\
0 & 1 & 1 & 1 & 0 & 0 & 0 & 1 \\
2 & 4 & 6 & 4 & 3 & 2 & 1 & 3 \\
2 & 4 & 5 & 4 & 3 & 2 & 1 & 3 \\
2 & 3 & 5 & 4 & 3 & 2 & 1 & 2 \\
2 & 3 & 4 & 4 & 3 & 2 & 1 & 2 \\
0 & 1 & 2 & 1 & 1 & 1 & 1 & 1 \\
0 & 1 & 1 & 1 & 1 & 1 & 1 & 1 \\
0 & 0 & 1 & 1 & 1 & 1 & 1 & 0 \\
0 & 0 & 0 & 1 & 1 & 1 & 1 & 0 \\
0 & 1 & 2 & 1 & 1 & 1 & 0 & 1 \\
0 & 1 & 1 & 1 & 1 & 1 & 0 & 1 \\
0 & 0 & 1 & 1 & 1 & 1 & 0 & 0
\end{tabular}
\end{subtable}
\begin{subtable}{0.3\textwidth}
\centering
\begin{tabular}{llllllll}
0 & 0 & 0 & 1 & 1 & 1 & 0 & 0 \\
0 & 1 & 2 & 1 & 1 & 0 & 0 & 1 \\
0 & 1 & 1 & 1 & 1 & 0 & 0 & 1 \\
0 & 0 & 1 & 1 & 1 & 0 & 0 & 0 \\
0 & 0 & 0 & 1 & 1 & 0 & 0 & 0 \\
1 & 2 & 4 & 3 & 2 & 2 & 1 & 2 \\
1 & 2 & 3 & 3 & 2 & 2 & 1 & 2 \\
1 & 2 & 3 & 2 & 2 & 2 & 1 & 1 \\
1 & 2 & 2 & 2 & 2 & 2 & 1 & 1 \\
1 & 2 & 4 & 3 & 2 & 1 & 1 & 2 \\
1 & 2 & 3 & 3 & 2 & 1 & 1 & 2 \\
1 & 2 & 3 & 2 & 2 & 1 & 1 & 1 \\
1 & 2 & 2 & 2 & 2 & 1 & 1 & 1 \\
1 & 2 & 4 & 3 & 2 & 1 & 0 & 2 \\
1 & 2 & 3 & 3 & 2 & 1 & 0 & 2 \\
1 & 2 & 3 & 2 & 2 & 1 & 0 & 1 \\
1 & 2 & 2 & 2 & 2 & 1 & 0 & 1 \\
1 & 1 & 2 & 1 & 0 & 0 & 0 & 1 \\
1 & 1 & 1 & 1 & 0 & 0 & 0 & 1 \\
1 & 1 & 1 & 0 & 0 & 0 & 0 & 0 \\
1 & 1 & 0 & 0 & 0 & 0 & 0 & 0 \\
1 & 1 & 2 & 1 & 1 & 1 & 1 & 1 \\
1 & 1 & 1 & 1 & 1 & 1 & 1 & 1 \\
1 & 2 & 3 & 2 & 1 & 1 & 1 & 1 \\
1 & 2 & 2 & 2 & 1 & 1 & 1 & 1 \\
1 & 1 & 2 & 1 & 1 & 1 & 0 & 1 \\
1 & 1 & 1 & 1 & 1 & 1 & 0 & 1 \\
1 & 2 & 3 & 2 & 1 & 1 & 0 & 1 \\
1 & 2 & 2 & 2 & 1 & 1 & 0 & 1 \\
1 & 2 & 4 & 3 & 3 & 2 & 1 & 2 \\
1 & 2 & 3 & 3 & 3 & 2 & 1 & 2 \\
1 & 3 & 5 & 4 & 3 & 2 & 1 & 2 \\
1 & 3 & 4 & 4 & 3 & 2 & 1 & 2 \\
1 & 1 & 2 & 1 & 1 & 0 & 0 & 1 \\
1 & 1 & 1 & 1 & 1 & 0 & 0 & 1 \\
1 & 2 & 3 & 2 & 1 & 0 & 0 & 1 \\
1 & 2 & 2 & 2 & 1 & 0 & 0 & 1 \\
0 & 0 & 0 & 0 & 0 & 0 & 0 & 1 \\
0 & 0 & 1 & 0 & 0 & 0 & 0 & 0 \\
0 & 0 & 1 & 0 & 0 & 0 & 0 & 1
\end{tabular}
\end{subtable}
\end{table}

\begin{figure}[h]
\centering
\begin{tikzpicture}
\begin{scope}[xscale=0.25, yscale=0.25]
\fill(-8,1)circle (5pt);
\node[above] at (-8,1) {$2$};
\node[below] at (-8,1) {$\alpha_{1}$};
\fill(-4,1)circle (5pt);
\node[above] at (-4,1) {$4$};
\node[below] at (-4,1) {$\alpha_{2}$};
\fill(0,1)circle (5pt);
\node[above] at (0,1) {$6$};
\node[below] at (-1,1) {$\alpha_{3}$};
\fill(0,-3)circle (5pt);
\node[left] at (0,-3) {$3$};
\node[right] at (0,-3) {$\alpha_{8}$};
\fill(4,1)circle (5pt);
\node[above] at (4,1) {$5$};
\node[below] at (4,1) {$\alpha_{4}$};
\fill(8,1)circle (5pt);
\node[above] at (8,1) {$4$};
\node[below] at (8,1) {$\alpha_{5}$};
\fill(12,1)circle (5pt);
\node[above] at (12,1) {$3$};
\node[below] at (12,1) {$\alpha_{6}$};
\fill(16,1)circle (5pt);
\node[above] at (16,1) {$2$};
\node[below] at (16,1) {$\alpha_{7}$};
\fill(20,1)circle (5pt);
\node[below] at (20,1) {$-\widetilde{\alpha}$};

\draw[thick](-8,1)to(-4,1);
\draw[thick](-4,1)to(0,1);
\draw[thick](0,1)to(4,1);
\draw[thick](0,1)to(0,-3);
\draw[thick](4,1)to(8,1);
\draw[thick](8,1)to(12,1);
\draw[thick](12,1)to(16,1);
\draw[thick](16,1)to(20,1);

\end{scope}
\end{tikzpicture}
\caption{Extended Dynkin diagram of $E_{8}$ root system, where the dominant root $\widetilde{\alpha}=\sum_{i=1}^{8}m_{i}\alpha_{i}=(24654323)$ and the integers represent $m_{i}$'s in Theorem~\ref{thm:BdS}.}\label{fig:e8dd}
\end{figure}

\subsubsection{Case 1: $(G,H)=(E_{8},D_8)$}
\label{sect:9.3.1}
In this case, according to Borel-deSiebenthal theory,
the positive root system embedding of $D_{8}$ (of cardinality $56$) to $E_{8}$ corresponds to the vertex $\alpha_{1}$ of the extended Dynkin diagram (Figure~\ref{fig:e8dd}) with $m_{1}=2$.
This is given by the set of roots such that $n_{1}$ is even in Table~\ref{fig:e8}. 
Therefore, $\Delta_{G,H}^{+}$ consists of the roots such that $n_{1}=1$ in Table~\ref{fig:e8}.
Then, $\Delta^{+}_{G,H}$ has the following linearly dependent quadruple:
\[
(12332211)-(12332111)=(12322211)-(12322111).
\]
Hence, $k(G/H)=2,3$.
By an argument similar to that used in the case $(G,H)=(E_{6},A_{5}\times A_{1})$, we also obtain $k(G/H)\not=2$.
Therefore, $k(G/H)=3$ and $(E_{8},D_{8})$ is one of the cases in Theorem~\ref{main} (2).

\subsubsection{Case 2: $(G,H)=(E_{8},E_{7}\times A_{1})$}
\label{sect:9.3.2}
In this case, according to Borel-deSiebenthal theory,
the positive root system embedding of $E_{7}\times A_{1}$ (of cardinality $64$) to $E_{8}$ corresponds to the vertex $\alpha_{7}$ of the extended Dynkin diagram (Figure~\ref{fig:e8dd}) with $m_{7}=2$.
This is given by the set of roots such that $n_{7}$ is even in Table~\ref{fig:e8}. 
Therefore, $\Delta_{G,H}^{+}$ consists of the roots such that $n_{7}=1$  in Table~\ref{fig:e8}.
One has the linearly dependent quadruple in $\Delta^{+}_{G,H}$ from the previous case.
By an argument similar to that used in the case $(G,H)=(E_{6},A_{5}\times A_{1})$, we also obtain $k(G/H)\not=2$.
Therefore, $k(G/H)=3$ and $(E_{7},E_{7}\times A_{1})$ is one of the cases in Theorem~\ref{main} (2).


\end{document}